\newtheorem{thm}{Theorem}[section]
\newtheorem{lem}[thm]{Lemma}
\newtheorem{prop}[thm]{Proposition}
\theoremstyle{definition}
\theoremstyle{remark}
\newtheorem*{remark}{Remark}
\theoremstyle{plain}
\numberwithin{equation}{section}
\newcommand{\R}{{\mathbb{R}}}
\DeclareMathAlphabet{\euls}{U}{eus}{m}{n}
\DeclareMathAlphabet{\eulr}{U}{eur}{m}{n}
\DeclareMathAlphabet{\eulrb}{U}{eur}{b}{n}
\def\preisomto{\vbox{\hbox to
               14pt{\hfill$\sim$\hfill}\nointerlineskip\vskip -0.3pt
               \hbox to 14pt{\rightarrowfill}}}
\def\prelongisomto{\vbox{\hbox to
                17pt{\hfill$\sim$\hfill}\nointerlineskip\vskip -0.3pt
                \hbox to 17pt{\rightarrowfill}}}
\newcommand{\doublelongrightarrow}{\longrightarrow \kern-14pt
\longrightarrow}
\def\trait{\hbox to 4mm{\hrulefill}}
\DeclareMathSymbol{\Ima}{\mathord}{symbols}{"3D}
\let\@trivlist\@trivlist
\begin{document}

\centerline{\LARGE \bf Existence of pulses for a reaction-diffusion}

\medskip

\centerline{\LARGE \bf system of blood coagulation}

\vspace{1cm}

\centerline{\bf  N. Ratto$^1$, M. Marion$^1$, V. Volpert$^{2,3,4,5}$}

\vspace{0.5cm}

\centerline{$^1$Institut Camille Jordan, UMR 5585 CNRS, Ecole Centrale de Lyon}

\centerline{69134 Ecully, France}

\centerline{$^2$Institut Camille Jordan, UMR 5585 CNRS, University Lyon 1}

\centerline{69622 Villeurbanne, France}

\centerline{$^3$
INRIA, Universit\'e de Lyon, Universit\'e Lyon 1, Institut Camille Jordan}
\centerline{
43 Bd. du 11 Novembre
1918, 69200 Villeurbanne Cedex, France}

\centerline{$^4$
Peoples’ Friendship University of Russia (RUDN University)}
\centerline{
6 Miklukho-Maklaya St, Moscow, 117198, Russian Federation}

\centerline{$^5$
Poncelet Center, UMI 2615 CNRS, 11 Bolshoy Vlasyevskiy, 119002 Moscow}
\centerline{
Russian Federation}

\vspace{1cm}

\noindent
{\bf Abstract.}
The paper is devoted to the investigation of a reaction-diffusion system of equations describing the process
of blood coagulation. Existence of pulses solutions, that is, positive stationary solutions with zero limit
at infinity is studied. It is shown that such solutions exist if and only if the speed of the travelling wave
described by the same system is positive. The proof is based on the Leray-Schauder method using topological
degree for elliptic problems in unbounded domains and a priori estimates of solutions in some
appropriate weighted spaces.

\vspace{1cm}

\noindent
{\bf Key words:} reaction-diffusion system, blood coagulation, existence of pulses, Leray-Schauder method

\medskip

\noindent
{\bf AMS subject classification:} 35K57

\vspace*{1cm}

\section{Introduction}

{Hemostasis} is a physiological process which aims to prevent bleeding in the case of blood vessel damage. It includes vasoconstriction, platelet plug formation and blood coagulation in plasma with the formation of fibrin clot.   In this work we will focus on the blood coagulation process. A malfunction in this process can lead to thrombosis or to various bleeding disorders.
The process of blood coagulation has three main stages: initiation, amplification and clot growth arrest. They are determined by chemical reactions in plasma between different proteins (blood factors), among which the most important role is played by thrombin. Thrombin is an enzyme catalyzing the conversion of the fibrinogen into the fibrin polymer which forms the clot.

In the process of blood coagulation initiated by the vessel wall damage (extrinsic pathway), the initial quantity of thrombin is produced due to the interaction of the tissue factor (TF) with the activated factors VIIa. During the amplification phase, there is a positive feedback loop of thrombin production through the activation of the factors V, VIII, IX, X and XI. It can be noted that hemophilia is characterized by the lack of factor VIII, IX or XI.  Finally, clot growth is stopped by the reaction of antithrombin with thrombin, by the protein C pathway, and due to the blood flow removing blood factors from the clot.

The amplification phase of blood coagulation can start only if the amount of thrombin produced during the initiation phase exceeds certain threshold. In this work we will show that this critical thrombin concentration is determined by a particular solution (pulse) of the reaction-diffusion system of equations describing the coagulation cascade. On the other hand, the amplification phase can be described as a reaction-diffusion wave \cite{A2}, \cite{pogorelova_influence_2014}, \cite{tokarev_volpert2006}, \cite{zarnitsina_dynamics_2001}.
The main result of this work affirms that the pulse solution exists if and only if the wave speed is positive. Thus, we obtain two conditions of blood coagulation: the wave speed should be positive providing the existence of the pulse solution; the distributions of the concentrations of blood factors after the initiation phase should be greater than those for the pulse solution.

We consider the reaction-diffusion system  of equations \cite{BOUCHNITA201674}:

\begin{equation}
\frac{\partial \mathbf{v}}{\partial t}=D {\frac{\partial^2 \mathbf{v}}{\partial x^2}}+\mathbf{F}(\mathbf{v}),
\label{Eq generale}
\end{equation}
%
%
where  $\mathbf{v}=(v_1,...,v_8)$ and $\mathbf{F} = (F_1,...,F_8)$,
\begin{equation}
\mathbf{F}(\mathbf{v})=
\begin{cases}
F_1(\mathbf{v})=k_1v_3v_6-h_1v_1 \\
F_2(\mathbf{v})=k_2v_4v_5-h_2v_2 \\
F_3(\mathbf{v})=k_3v_8(\rho_3-v_3)-h_3v_3 \\
F_4(\mathbf{v})=k_4v_8(\rho_4-v_4)-h_4v_4 \\
F_5(\mathbf{v})=k_5v_7(\rho_5-v_5)-h_5v_5 \\
F_6(\mathbf{v})=(k_6v_5+\overline{k_6}v_2)(\rho_6-v_6)-h_6v_6\\
F_7(\mathbf{v})=k_7v_8(\rho_7-v_7)-h_7v_7\\
F_8(\mathbf{v})=(k_8v_6+ \overline{k_8}v_1)(\rho_8-v_8)-h_8v_8 \\
\end{cases}.
\label{Def F}
\end{equation}
Parameters $(k_i)_i$, $(h_i)_i$ and $(\overline{k_i})_i$ and $(\rho_i)_i$ are positive constants. {The matrix $D=diag(D_i)$ is a diagonal matrix with positive diagonal elements $D_i$.}

In this system, $v_3, v_4, v_5, v_6$ and $v_7$ denote, respectively, the concentrations of the activated Factors $Va, VIIIa, IXa, Xa$ and $XIa$, $v_8$ is the concentration of activated Factor $IIa$ (thrombin), $v_1$ and $v_2$ are the concentrations of prothrombinase and intrinsic tenase complexes. The constants $k_i$ and $\bar{k_i}$ are the activation rates of the corresponding factors by other factors or complexes, while the constants $h_i$ are the rates their inhibition. {The constants $D_i$ are the diffusion coefficients of each factor.} The thrombin concentration, $v_8$, has a major role in the coagulation process. It will also have a particular importance in the mathematical study. We will use notation $T=v_8$ and $T^0=\rho_8$.

Let us introduce the set $\mathcal{C}$:
\begin{equation}
\mathcal{C}=\{\mathbf{v}=(v_1,...,v_8)\in\R^8_+ | v_i\leq \rho_i \text{ for } i \in \{3,...,8\} \}.
\label{Def C}
\end{equation}
Then the system \eqref{Eq generale} is a monotone system on $\mathcal{C}$ , that is:
\begin{equation}
\forall \mathbf{v}\in \mathcal{C}, \quad \frac{\partial F_i}{\partial v_j}(\mathbf{v})\geq 0, \quad \forall i\ne j.
\label{F monotone}
\end{equation}

Note that $\mathbf{v}=\mathbf{0}$ is a zero of $\mathbf{F}$.
In order to determine other zeros, let us express $v_i$ through $T$ from the equations $F_i(\mathbf{v})=0$, $i=1,...,7$:
$v_i = \phi_i(T)$ (see Appendix A for the explicit form of these functions). Substituting them in the equation $F_8(\mathbf{v})=0$,
we obtain:
\begin{equation}
P(T) \equiv (k_8\phi_6(T)+\bar{k_8}\phi_1(T))(T^0-T)-h_8T = 0 . \label{Equation P}
\end{equation}
It can be directly verified that  {  $P(T)$ is a rational fraction that takes the form $P(T) = \frac {T Q(T)} {S(T)}$}, where $Q(T)$ and  $S(T)$ are third-order polynomials. { Moreover 
$Q(T)=aT^3+bT^2+cT+d$
 with }  $a<0$, and $S(T) > 0$ for $T \geq 0$ { (the reader is referred to the appendix)}.

{ Hereafter, we will assume that $P$ satisfies  the following properties}:
\begin{equation}
\begin{cases}
{P \text{ possesses exactly three non negative zeros: }T^+=0 <\bar{T}<T^-} \\
{P'(T^+)<0, \; P'(\bar{T})>0, \; P'(T^-)<0.}
\end{cases}
\label{Condition P}
\end{equation}
{In view of the form of the rational fraction $P(T)$, the above conditions mean that the third order polynomial $Q$ with negative leading coefficient $a$ has exactly two positive roots. Also it is easy to check that the conditions on the derivatives in \eqref{Condition P} also read 
\begin{equation}
Q(0) = d <0, \;Q'(\bar{T})>0, \;Q'(T^-)<0.
\label{Condition Q}
\end{equation}
In particular ${Q}(T) > 0$ for some $T > 0$}. This assumption is biologically justified, and we will return to it in the discussion (Section 5).

The non negative roots $T^+$, $\bar{T}$ and $T^-$ correspond to three zeros of $\mathbf{F}$: $\mathbf{w}^+=(\phi(T^+),T^+)=\mathbf{0} $, $\bar{\mathbf{w}}=(\phi(\bar{T}),\bar{T})$ and $\mathbf{w}^-=(\phi(T^-),T^-)$.
Moreover we have $\mathbf{w}^+=\mathbf{0}<\bar{\mathbf{w}}<\mathbf{w}^-$, where the inequalities between the  vectors are understood
component-wise.


As stated before, thrombin propagation in blood plasma is described by a traveling wave solution of the system \eqref{Eq generale}. A wave solution of \eqref{Eq generale} is a solution that can be written as $\mathbf{v}(x,t)=\mathbf{u}(x-ct)$ where the wave speed $c\in \R$ is unknown. Hence we look for a function $\mathbf{u}$ and a constant $c$ that are solutions of the problem:
\begin{equation}
 \begin{cases}
D\mathbf{u}''+c\mathbf{u}'+\mathbf{F}(\mathbf{u})=0, \\
\mathbf{u}(\pm \infty)=\mathbf{w}^{\pm}.
 \end{cases}
\label{Eq wave}
 \end{equation}
Under the conditions \eqref{F monotone} and \eqref{Condition P}, the problem \eqref{Eq wave} possesses a unique solution (up to some translation in space for $u$). This solution is a monotonically decreasing vector-function. This results are presented in \cite{TravelingWave}.

Biologically, it has been noted that the amplification of thrombin generation occurs if the amount of thrombin produced during the initiation phase reaches a certain threshold. We will show that this threshold is a pulse of the stationary system, that is a function $\mathbf{w}: \R \rightarrow \R^8$ that satisfies the following problem:
\begin{equation}
\begin{cases}
D\mathbf{w}''+\mathbf{F}(\mathbf{w})=0, \\
\mathbf{w}'(0)= 0 , \; \mathbf{w}(+\infty)=0, \\
\mathbf{w}'<0 \quad \text{ for } x>0.
\end{cases}
\label{Eq stat}
\end{equation}

The link between the solutions of the wave problem \eqref{Eq wave} and the pulse problem \eqref{Eq stat} is given by the main result of this work:
\begin{thm}
Under the condition \eqref{Condition P} the problem \eqref{Eq stat} has a solution if and only if the wave speed $c$ in the problem \eqref{Eq wave} is positive.
\label{Thm principal}
\end{thm}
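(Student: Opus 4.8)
The plan is to prove the two implications separately. Note first that, since \eqref{Eq stat} is an autonomous second-order system and $\mathbf{w}'(0)=0$, uniqueness for the associated Cauchy problem forces any pulse to be even in $x$; hence a pulse is precisely an even, strictly positive, exponentially decaying homoclinic orbit of $D\mathbf{w}''+\mathbf{F}(\mathbf{w})=0$ based at $\mathbf{0}$, with maximum $\mathbf{M}:=\mathbf{w}(0)$ at the origin, and $\mathbf{F}(\mathbf{M})=-D\mathbf{w}''(0)\ge\mathbf{0}$ componentwise.

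\smallskip
\noindent\textbf{Necessity.} I would argue by contradiction: suppose a pulse $\mathbf{w}$ exists and $c\le 0$. By \cite{TravelingWave} the profile $\mathbf{u}$ of \eqref{Eq wave} is strictly decreasing from $\mathbf{w}^-$ to $\mathbf{0}$, and $\mathbf{u}(x-ct)$ is an exact solution of \eqref{Eq generale}. Using an a priori bound on the amplitude, of the form $\mathbf{M}\le\mathbf{w}^-$ (which follows from $\mathbf{F}(\mathbf{M})\ge\mathbf{0}$ together with the location of $\{\mathbf{F}\ge\mathbf{0}\}$ inside $\calC$), and a comparison of exponential decay rates at $+\infty$ (for $c\le 0$ the wave decays no faster than the pulse), one checks that $\mathbf{u}(x-h)\ge\mathbf{w}(x)$ on $\R$ for $h$ large. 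Since \eqref{Eq generale} is monotone on $\calC$ by \eqref{F monotone}, the parabolic comparison principle gives $\mathbf{w}(x)\le\mathbf{u}(x-ct-h)$ for all $t\ge 0$. If $c<0$, then $x-ct-h\to+\infty$ as $t\to\infty$, so the right-hand side tends to $\mathbf{0}$ locally uniformly, forcing $\mathbf{w}\equiv\mathbf{0}$, a contradiction. The borderline case $c=0$ needs a refinement: slide $h$ down to the critical value $h^{*}$; a contact point at a finite $x_0$ or at $x=0$ would, by the strong maximum principle for the cooperative elliptic system, force $\mathbf{u}(\cdot-h^{*})\equiv\mathbf{w}$, which is impossible since $\mathbf{u}'<0$ everywhere while $\mathbf{w}'(0)=0$; contact only ``at $+\infty$'' is excluded because the pulse is linearly unstable (the operator $\boldsymbol\phi\mapsto D\boldsymbol\phi''+\mathbf{F}'(\mathbf{w}(x))\boldsymbol\phi$ has a positive principal eigenvalue, as $\mathbf{w}'$ is a sign-changing element of its kernel), so a small upward perturbation of $\mathbf{w}$ is a subsolution of the stationary problem and the corresponding solution of \eqref{Eq generale} increases and cannot stay bounded by a standing front when $c=0$.

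\smallskip
\noindent\textbf{Sufficiency.} This is the main part, and I would carry it out by the Leray--Schauder method. Work in a space $E$ of even $\R^{8}$-valued functions on $\R$ in an exponentially weighted H\"older (or Sobolev) norm forcing decay to $\mathbf{0}$ at $\pm\infty$, and set $\mathcal{A}(\mathbf{w}):=D\mathbf{w}''+\mathbf{F}(\mathbf{w})$. In such weighted spaces the linearization of $\mathcal{A}$ is Fredholm of index zero and $\mathcal{A}$ is proper on closed bounded sets (Volpert's theory of elliptic operators in unbounded domains), the key point being that the only limiting operator at infinity is $\boldsymbol\phi\mapsto D\boldsymbol\phi''+\mathbf{F}'(\mathbf{0})\boldsymbol\phi$, which is invertible on exponentially weighted spaces precisely because $\mathbf{0}$ is stable. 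Hence the topological degree $\gamma(\mathcal{A},\Omega)$ is defined for bounded open $\Omega\subset E$ with no zero of $\mathcal{A}$ on $\partial\Omega$. I would then introduce a homotopy $\mathcal{A}_\tau(\mathbf{w})=D\mathbf{w}''+\mathbf{F}_\tau(\mathbf{w})$, $\tau\in[0,1]$, with $\mathbf{F}_1=\mathbf{F}$, $\mathbf{F}_0$ a model nonlinearity possessing an explicit nondegenerate pulse, and each $\mathbf{F}_\tau$ monotone with \emph{positive} wave speed $c_\tau$ — this last requirement being exactly what $c=c_1>0$ makes possible, since the wave speed depends continuously on the nonlinearity. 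The core of the proof is the set of a priori estimates, uniform in $\tau$, for solutions of $\mathcal{A}_\tau(\mathbf{w})=\mathbf{0}$: (i) an $L^{\infty}$ bound placing $\mathbf{w}$ in a fixed dilate of $\calC$, via the maximum principle and the structure of $\mathbf{F}_\tau$; (ii) uniform exponential decay in the weighted norm, from stability of $\mathbf{0}$; (iii) separation simultaneously from the trivial solution and from the standing-front shape — the solution stays above a fixed nontrivial barrier obtained by comparison with a sub-level set of the traveling wave $\mathbf{u}_\tau$, and it cannot spread out toward a front because $c_\tau>0$ strictly; and (iv) the monotonicity $\mathbf{w}'<0$ on $\{x>0\}$, which for a positive decaying solution of a monotone system with $\mathbf{w}'(0)=0$ is automatic. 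These estimates confine all solutions of the homotopy to a fixed $\Omega$ with no zeros on $\partial\Omega$, so $\gamma(\mathcal{A}_1,\Omega)=\gamma(\mathcal{A}_0,\Omega)$; a direct computation with the model pulse (using its nondegeneracy) gives $\gamma(\mathcal{A}_0,\Omega)=\pm1\ne 0$, whence $\mathcal{A}(\mathbf{w})=\mathbf{0}$ has a solution in $\Omega$, which is the desired pulse.

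\smallskip
\noindent The step I expect to be hardest is the a priori estimate (iii): it is exactly there that positivity of the wave speed enters quantitatively, and it is delicate in the unbounded domain because one must bound the amplitude from below (to stay away from $\mathbf{0}$) while simultaneously controlling the exponential tails (to stay in the weighted space and away from the infinitely wide plateau that the pulse degenerates into as $c\downarrow 0$). Secondary technical points will be the Fredholm and properness properties of $\mathcal{A}$ in the chosen weighted spaces, the construction of a monotone homotopy of nonlinearities keeping $c_\tau>0$, and the degree computation for the model problem.
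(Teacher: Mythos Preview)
Your overall architecture matches the paper's: Leray--Schauder for sufficiency, parabolic comparison plus sliding for necessity, with positivity of the wave speed entering exactly through the a~priori estimates that prevent degeneration to a standing front. A few implementation choices differ and are worth knowing.

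For the homotopy the paper does not deform to an abstract ``model nonlinearity''. It modifies only $F_8$, in two stages: first adding $\tau g(T)$ for a bump $g$ supported in $(\bar T,T^-)$, then interpolating $v_1,v_6$ toward $\phi_1(T),\phi_6(T)$. At $\tau=1$ the thrombin equation is a decoupled scalar bistable equation whose pulse is found by elementary phase-plane analysis, and the remaining seven equations are linear and solved successively; nondegeneracy of the linearization is checked directly on this triangular structure. The zeros of $\mathbf{F}^\tau$ and their stability are preserved exactly along the homotopy, and $g$ is tuned (via a scalar sub-solution argument) so that $c^\tau>0$ throughout.

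Second, the paper does \emph{not} assert that monotonicity is automatic for positive decaying solutions of the system. Instead it proves a separation theorem showing that monotone and non-monotone solutions are uniformly bounded apart in the weighted space, and then computes the degree in an open set containing only the monotone ones. Your step~(iv) would need this in place of the word ``automatic''.

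Third, the weight is the polynomial $\mu(x)=\sqrt{1+x^2}$ rather than an exponential. The weighted bound (your step~(iii), their Theorem~3.3) is obtained exactly as you anticipate: if the weighted norms blow up along the homotopy, a shifted subsequence converges to a monotone heteroclinic with speed $0$, contradicting $c^\tau>0$.

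For necessity with $c<0$ the paper avoids ordering the pulse directly under a shifted wave (which requires a delicate tail comparison); it takes any initial datum $\mathbf{v}_0>\mathbf{w}$ with the front-type limits $\mathbf{w}^\pm$ and invokes global convergence of the parabolic problem to the travelling wave. The case $c=0$ is handled by the sliding argument you describe, with the ``contact at infinity'' issue disposed of by a short elliptic lemma on the tail region rather than via the spectral instability of the pulse. Your identification of the hardest step is accurate.
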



The proof of the theorem mainly relies on the Leray-Schauder method and some homotopy arguments.  Therefore, we will introduce some appropriate homotopy deformation in Section \ref{Sec Homotopy}. 
In Section \ref{Sec Estimation} we  obtain a priori estimates of solutions in some weighed H\"older spaces using positivity of the wave speed. These estimates are independent of the parameter of the homotopy. Hence, the value of the  topological degree is preserved along the homotopy providing the existence of solutions. We finish the proof of the existence of solution of problem \eqref{Eq stat}
 in Section \ref{Sec Proof}. Also in this section we will show that the problem has no solution if $c\leq 0$.


\section{Homotopy}
\label{Sec Homotopy}

In order to prove the existence part of  Theorem 1.1 
we introduce in this section a homotopy deformation and highlight some its properties.

\subsection{Description of the homotopy}
The homotopy aims to modify continuously the function $\mathbf{F}$. We will only modify the last component $F_8(\mathbf{v})=(k_8v_6+ \overline{k_8}v_1)(T^0-T)-h_8T$ of \eqref{Def F} which depends on $T, v_6$ and $v_1$ into a new function depending only on $T$. Hence, the last equation will be independent from the other equations. In Lemma \ref{lem exit T} (Section \ref{Sec Proof}) we will show that this equation possesses a unique solution. For notation purpose the initial function, $\mathbf{F}$, corresponds to $\tau=0$ and is written $\mathbf{F}^{0}$. The homotopy functions reads $\mathbf{F}^{\tau}$, $\tau \in [0,1]$. The homotopy is defined in two steps, and we introduce some $\tau_1$ in $(0,1)$.

For the first step we introduce a smooth function $g$ that will be chosen below. We will construct the homotopy in such a way that the zeros of the function $\mathbf{F}^{\tau}$ do not change and coincide with the zeros of $\mathbf{F}$. Hence, assuming that $P$ satisfies the conditions \eqref{Condition P}, we impose that $g$ satisfies the following condition:

\begin{equation}
g(s)\geq 0 \text{ for } s\geq 0, \text{ Supp } g \subset (\bar{T},T^-).
\label{condition g}
\end{equation}
Next, for $\tau \in (0,\tau_1)$ we define the homotopy by the equality:
\begin{equation}
F_8^{\tau}(\mathbf{v})=(k_8v_6+ \overline{k_8}v_1)(T^0-T)-h_8T+ \tau g(T).
\label{Def Hom 1}
\end{equation}

 At the second step of the homotopy we will deal with the variables $v_1$ and $v_6$.
 We will replace them by the functions $\phi_1(T)$ and $\phi_6(T)$ given by \eqref{Equation phi1} and \eqref{Equation phi6}
 (see also \eqref{Equation P}) without modifying the zeros of $\mathbf{F}^{\tau}$.
Hence, for $\tau \in (\tau_1,1)$ we set:
\begin{equation}
F_8^{\tau}(\mathbf{v})=(k_8(\alpha^{\tau}v_6+\beta^{\tau}\phi_6(T))+ \overline{k_8}(\alpha^{\tau}v_1+\beta^{\tau}\phi_1(T))(T^0-T)-h_8T+ \tau_1 g(T),
\label{Def Hom 2}
\end{equation}
where
\begin{equation}
\alpha^{\tau}=\frac{1-\tau}{1-\tau_1} \; , \quad
\beta^{\tau}=\frac{\tau-\tau_1}{1-\tau_1} \; .
\end{equation}
It will be convenient to introduce notation:

\begin{equation}
\begin{array}{c c c}

\begin{cases}
\alpha^{\tau}=\frac{1-\tau}{1-\tau_1} \text{ if } \tau\geq \tau_1\\
\alpha^{\tau}=1 \text{ if } \tau<\tau_1
\end{cases}
, &
\begin{cases}
\beta^{\tau}=\frac{\tau-\tau_1}{1-\tau_1} \text{ if } \tau\geq \tau_1\\
\beta^{\tau}=0  \text{ if } \tau<\tau_1
\end{cases}
, &
\begin{cases}
\gamma^{\tau}=\tau_1 \text{ if } \tau\geq \tau_1\\
\gamma^{\tau}=\tau \text{ if } \tau<\tau_1\\
\end{cases}
.
\end{array}
\end{equation}
Then equalities \eqref{Def Hom 1} and \eqref{Def Hom 2} can be put together as follows:

\begin{equation}
F_8^{\tau}(\mathbf{v}) = \alpha^{\tau}F_8(\mathbf{v})+\beta^{\tau}P(T)+\gamma^{\tau}g(T).
\label{Def Hom 3}
\end{equation}

\subsection{Preservation of some properties}

Along the homotopy, and independently of the choice of $\tau_1$,  some properties of $\mathbf{F}$ are preserved: the monotony, the stationary points and their stability.
For the monotony we have the following result.

\begin{lem}
For all $\tau$ in $[0,1]$, the function $\mathbf{F}^{\tau}$ satisfies the monotony property on $\mathcal{C}$:
\begin{equation}
\forall \mathbf{v}\in \mathcal{C}, \quad \frac{\partial F^{\tau}_i}{\partial v_j}(\mathbf{v})\geq 0, \quad \forall i\ne j .
\label{F tau monotone}
\end{equation}
\end{lem}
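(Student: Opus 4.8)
The plan is to check the sign condition $\partial F_i^\tau/\partial v_j \geq 0$ for $i\neq j$ on $\mathcal C$ directly, observing that the homotopy only alters the eighth component $F_8$. For $i\in\{1,\dots,7\}$ the functions $F_i^\tau = F_i$ are unchanged by the construction, so the required inequalities are exactly those already recorded in \eqref{F monotone}. Hence the whole matter reduces to two families of inequalities: first, $\partial F_8^\tau/\partial v_j \geq 0$ for $j\in\{1,\dots,7\}$ (the off-diagonal entries in the last row); second, $\partial F_i^\tau/\partial v_8 = \partial F_i/\partial v_8 \geq 0$ for $i\in\{1,\dots,7\}$, which again is inherited from \eqref{F monotone} since those components are untouched. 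So really only the last row needs a genuine argument.

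For the last row I would use the compact form \eqref{Def Hom 3}, $F_8^\tau(\mathbf v)=\alpha^\tau F_8(\mathbf v)+\beta^\tau P(T)+\gamma^\tau g(T)$, together with $\alpha^\tau,\beta^\tau,\gamma^\tau\geq 0$ for all $\tau\in[0,1]$. Since $P(T)$ and $g(T)$ depend on $\mathbf v$ only through $T=v_8$, their partial derivatives with respect to $v_j$ vanish for $j\in\{1,\dots,7\}$; therefore $\partial F_8^\tau/\partial v_j = \alpha^\tau\,\partial F_8/\partial v_j$ for those $j$. Now $\partial F_8/\partial v_1 = \overline{k_8}(T^0-T)$ and $\partial F_8/\partial v_6 = k_8(T^0-T)$, both nonnegative on $\mathcal C$ because $T=v_8\leq\rho_8=T^0$ there, while $\partial F_8/\partial v_j=0$ for $j\in\{2,3,4,5,7\}$. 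Multiplying by $\alpha^\tau\geq 0$ preserves the sign, which settles the off-diagonal entries of the last row for $\tau<\tau_1$ immediately.

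For $\tau\geq\tau_1$ the same computation applies verbatim using \eqref{Def Hom 3}, the only subtlety being that the replacement terms $\beta^\tau k_8\phi_6(T)$ and $\beta^\tau\overline{k_8}\phi_1(T)$ also depend on $T$ alone and hence do not contribute to $\partial/\partial v_j$ for $j\leq 7$; the coefficients $\alpha^\tau v_6$ and $\alpha^\tau v_1$ multiplying $(T^0-T)$ again give $\alpha^\tau\overline{k_8}(T^0-T)\geq 0$ and $\alpha^\tau k_8(T^0-T)\geq 0$. I do not anticipate a real obstacle here; the only thing to be careful about is the bookkeeping at the junction $\tau=\tau_1$ (continuity of $\mathbf F^\tau$ in $\tau$, which follows since $\alpha^{\tau_1}=1$, $\beta^{\tau_1}=0$, $\gamma^{\tau_1}=\tau_1$ from both sides) and the elementary fact, to be noted once, that $g\geq 0$ and its support lying in $(\bar T, T^-)$ play no role in the monotony itself — they matter only for preserving the zeros of $\mathbf F^\tau$, not for \eqref{F tau monotone}. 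The proof is thus essentially a sign check term by term.
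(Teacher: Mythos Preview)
Your proposal is correct and follows essentially the same approach as the paper: reduce to the eighth component, use the compact form \eqref{Def Hom 3}, observe that $P(T)$ and $g(T)$ depend only on $v_8$, and conclude $\partial F_8^\tau/\partial v_j = \alpha^\tau\,\partial F_8/\partial v_j \geq 0$ for $j\leq 7$. The paper's proof is just the one-line version of this computation; your extra remarks about the junction at $\tau_1$ and the explicit partials are fine but unnecessary.
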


\begin{proof}
Since only the last component of $\mathbf{F}$ differs during the homotopy, we only need to verify the result for $F_8^{\tau}$.
Using the expression \eqref{Def Hom 3} and  the monotony of $\mathbf{F}$ given by \eqref{F monotone} we have:
$$\frac{\partial F^{\tau}_8}{\partial v_j}(\mathbf{v})= \alpha^{\tau}\frac{\partial F_8}{\partial v_j}(\mathbf{v}) \geq 0, \quad \text{ for } j \in\{1,...,7\}.$$
\end{proof}

We will now show that the homotopy does not have any impact on the zeros of $\mathbf{F}^{\tau}$.

\begin{lem}
For all $\tau$ in $[0,1]$, the zeros of $\mathbf{F}^{\tau}$ are exactly the same as the zeros of $\mathbf{F}$.
\end{lem}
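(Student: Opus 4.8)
The plan is to show that the set of zeros of $\mathbf{F}^{\tau}$ in $\R^8_+$ is independent of $\tau$, using the decomposition \eqref{Def Hom 3}, namely $F_8^{\tau}(\mathbf{v}) = \alpha^{\tau}F_8(\mathbf{v})+\beta^{\tau}P(T)+\gamma^{\tau}g(T)$. Since only the eighth component is deformed, a vector $\mathbf{v}$ is a zero of $\mathbf{F}^{\tau}$ if and only if $F_i(\mathbf{v}) = 0$ for $i = 1,\dots,7$ and $F_8^{\tau}(\mathbf{v}) = 0$. First I would use the equations $F_i(\mathbf{v})=0$ for $i=1,\dots,7$ to express, exactly as in the derivation of \eqref{Equation P}, each $v_i = \phi_i(T)$ in terms of $T=v_8$; in particular $v_1 = \phi_1(T)$ and $v_6 = \phi_6(T)$. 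Substituting these into \eqref{Def Hom 3} and recalling that $P(T) = (k_8\phi_6(T)+\bar{k_8}\phi_1(T))(T^0-T)-h_8T = F_8(\phi_1(T),\dots,\phi_6(T),\dots,T)$ is precisely $F_8$ evaluated on the constrained locus, the first two terms collapse: $\alpha^{\tau}F_8 + \beta^{\tau}P(T) = (\alpha^{\tau}+\beta^{\tau})P(T) = P(T)$, because $\alpha^{\tau}+\beta^{\tau}=1$ for every $\tau\in[0,1]$ by the definition of $\alpha^\tau,\beta^\tau$. Hence on this locus $F_8^{\tau}(\mathbf{v}) = P(T) + \gamma^{\tau} g(T)$.

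It then remains to check that $P(T)+\gamma^{\tau}g(T)=0$ has exactly the same non-negative solutions as $P(T)=0$, for every $\tau$. By \eqref{Condition P} the non-negative zeros of $P$ are $T^+=0<\bar{T}<T^-$. By the support condition \eqref{condition g}, $\mathrm{Supp}\,g\subset(\bar{T},T^-)$, so $g$ vanishes at $T^+=0$, at $\bar{T}$, and at $T^-$, and also for all $T\ge T^-$ and for $0\le T\le\bar T$; thus $P(T)+\gamma^{\tau}g(T)=P(T)$ outside the open interval $(\bar T,T^-)$, and in particular $T^+,\bar T,T^-$ remain zeros. Conversely I must rule out new zeros inside $(\bar T,T^-)$: there, by \eqref{Condition P} (i.e. by the sign pattern $P'(\bar T)>0$, $P'(T^-)<0$, and $P$ having no other zero), $P(T)>0$ on the whole open interval $(\bar T,T^-)$, while $\gamma^{\tau}g(T)\ge 0$ since $\gamma^{\tau}\in\{\tau,\tau_1\}\ge 0$ and $g\ge 0$ by \eqref{condition g}; hence $P(T)+\gamma^{\tau}g(T)>0$ there and no zero is created. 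Combining, the solution set of $F_8^{\tau}=0$ on the locus $\{v_i=\phi_i(T)\}$ is exactly $\{T^+,\bar T,T^-\}$, independent of $\tau$, which by the correspondence $T\mapsto(\phi(T),T)$ gives exactly the three zeros $\mathbf{w}^+=\mathbf{0}$, $\bar{\mathbf{w}}$, $\mathbf{w}^-$ of $\mathbf{F}$.

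The only genuinely delicate point — the ``main obstacle'' — is the second step of the homotopy, $\tau\in(\tau_1,1)$, where \eqref{Def Hom 2} still contains the free variables $v_1,v_6$ rather than $\phi_1(T),\phi_6(T)$; one must be careful that the argument above uses the equations $F_1=\dots=F_7=0$ to force $v_1=\phi_1(T)$ and $v_6=\phi_6(T)$ \emph{before} substituting into \eqref{Def Hom 3}, so that the combination $\alpha^\tau v_6+\beta^\tau\phi_6(T)$ indeed becomes $\phi_6(T)$ (and likewise for $v_1$) on the zero locus — this is exactly why the deformation was designed to replace $v_1,v_6$ by their constrained values, and why $\alpha^\tau+\beta^\tau=1$ is imposed. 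I would state this explicitly: a zero of $\mathbf{F}^\tau$ automatically satisfies $F_i=0$ for $i\le 7$, hence lies on the locus, hence the reduction goes through. Everything else is the elementary sign bookkeeping above, so the proof is short.
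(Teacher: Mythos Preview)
Your argument is correct and follows exactly the paper's route: restrict to the locus $v_i=\phi_i(T)$ forced by $F_1=\cdots=F_7=0$, use $\alpha^\tau+\beta^\tau=1$ to collapse $\alpha^\tau F_8+\beta^\tau P$ to $P$, and conclude that the zero set is governed by $P^\tau(T)=P(T)+\gamma^\tau g(T)$. Your write-up is in fact more explicit than the paper's, which simply asserts that the zeros of $P^\tau$ and $P$ coincide; your sign bookkeeping (support of $g$ in $(\bar T,T^-)$, $P>0$ there, $\gamma^\tau g\ge 0$) is precisely the justification the paper leaves implicit.
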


\begin{proof}
 As before, since the homotopy only modifies the last component of $\mathbf{F}^{\tau}$, we only need to prove that the zeros of $F_8^{\tau}$ remain unchanged as $\tau$ varies.
In order to find the zeros of $\mathbf{F}^{\tau}$ we use the same method as for the zeros of $\mathbf{F}$ in the Appendix \ref{App Exp phi}. Let  $\mathbf{v}=(v_1,...,v_7,T)$ denote some zero of $\mathbf{F}^{\tau}$. Since $F_i^{\tau}=F_i$ for $1\leq i\leq 7$, the relations \eqref{Equation phi1}-\eqref{Equation phi7} remain unchanged.  Also, in view of \eqref{Equation P} and \eqref{Def Hom 3} we have:
\begin{equation}
F_8^{\tau}(\boldsymbol{\phi}(T),T)=\alpha^{\tau}F_8(\boldsymbol{\phi}(T),T)+\beta^{\tau}P(T)+\gamma^{\tau}g(T)=P(T) + \gamma^{\tau}g(T)=P^{\tau}(T).
\label{Equation P tau}
\end{equation}
Hence the zeros of $\mathbf{F}^{\tau}$ are given by:
\begin{equation}
\mathbf{F}^{\tau}(\mathbf{v})=\mathbf{0} \iff \mathbf{v}=(\phi(T),T) \text{ and } P^{\tau}(T)=0.
\label{Zero F tau}
\end{equation}
Since the zeros of $P^{\tau}$ and $P$ coincide, we have $P^{\tau}(T)=0 \iff P(T)=0$. Hence $\mathbf{F}^{\tau}(\mathbf{w})=\mathbf{0}  \iff \mathbf{F}(\mathbf{w})=\mathbf{0} $.
\end{proof}

Therefore the zeros of $\mathbf{F}^{\tau}$ are $\mathbf{w}^+, \bar{\mathbf{w}}$ and $\mathbf{w}^-$.
Let us now investigate their stability.
Let $\mathbf{w}^*$ refer to either one them.
For this purpose we will need the following lemma.

\begin{lem}\label{lem F_i(phi,T)=0}
For all $T\geq 0$ and $i=1,...,7$ we have the equalities:
\begin{equation}
F_i(\boldsymbol{\phi}(T),T)=0
\end{equation}
and
\begin{equation}\label{Eq dFi/dT}
\frac{d F_i}{d T}(\phi(T),T)=\sum_{j=1}^7\frac{\partial F_i}{\partial v_j}(\phi(T),T)\phi_j'(T) +\frac{\partial F_i}{\partial v_8}(\phi(T),T)=0.
\end{equation}
\end{lem}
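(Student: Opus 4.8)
The first equality is immediate from the definition of the functions $\phi_i$. Recall that $\phi_i(T)$ was constructed precisely by solving $F_i(\mathbf{v})=0$ for $v_i$ in terms of $T$ (Appendix A), where each $F_i$ with $1\le i\le 7$ depends only on $v_i$ and $T$ (inspection of \eqref{Def F}: $F_1$ depends on $v_3,v_6$ — wait, here I should be careful). Let me restate: for $i\in\{3,4,5,7\}$ the equation $F_i(\mathbf{v})=0$ involves only $v_i$ and $T=v_8$, so $\phi_i(T)$ is defined by $F_i(\phi_i(T),T)=0$ directly; for $i\in\{1,2,6\}$ the equation $F_i(\mathbf{v})=0$ also involves some of the already-solved $v_3,\dots,v_7$, so one substitutes $\phi_3(T),\dots,\phi_7(T)$ first and then solves for $v_i$. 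In every case the construction guarantees, by definition, that $F_i(\boldsymbol\phi(T),T)=0$ as an identity in $T\ge 0$. So I would simply say: this is the defining property of $\boldsymbol\phi$, established in Appendix A.

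The second equality then follows by differentiating the identity $F_i(\boldsymbol\phi(T),T)=0$ with respect to $T$. Since each $F_i$ is a smooth (polynomial, hence $C^\infty$) function of $\mathbf{v}$, and each $\phi_j$ is smooth in $T$ on $T\ge 0$ (this smoothness is part of what is checked in the Appendix — the denominators $S_j(T)$ do not vanish for $T\ge 0$), the chain rule applies and yields exactly
\[
0=\frac{d}{dT}F_i(\boldsymbol\phi(T),T)=\sum_{j=1}^{7}\frac{\partial F_i}{\partial v_j}(\boldsymbol\phi(T),T)\,\phi_j'(T)+\frac{\partial F_i}{\partial v_8}(\boldsymbol\phi(T),T),
\]
which is \eqref{Eq dFi/dT} (here $\phi_8'$ does not appear because the $8$th coordinate of $(\boldsymbol\phi(T),T)$ is $T$ itself, whose derivative is $1$, absorbed into the last term).

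There is essentially no obstacle here: the only thing to be careful about is the smoothness/regularity of $\boldsymbol\phi$ needed to justify differentiating the identity, and that is guaranteed by the explicit rational expressions for $\phi_j$ in Appendix A together with the nonvanishing of their denominators on $[0,\infty)$. I would phrase the proof in two short sentences invoking (i) the definition of $\boldsymbol\phi$ for the first equality and (ii) the chain rule applied to that identity for the second.
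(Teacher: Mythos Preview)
Your proposal is correct and matches the paper's own argument exactly: the paper simply states that the first equality follows from the definition of the functions $\phi_i$ given in the Appendix, and the second is obtained by differentiating the first one. Your extra care about the smoothness of $\boldsymbol\phi$ and the dependence structure of the $F_i$ is fine but not strictly needed here.
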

The first equality follows from the definition of the functions $\phi_i$, $1\leq i \leq 7$ given in the Appendix \ref{App Exp phi}. The second one is obtained by differentiating  the first one.

The next result concerns the stability of these zeros.
\begin{prop}\label{lem Stability}
The sign of the principal eigenvalue of the Jacobian $(\mathbf{F}^{\tau})'(\mathbf{w}^*)$ does not depend on $\tau \in [0,1]$.
\end{prop}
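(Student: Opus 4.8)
The plan is to show that the Jacobian matrix $(\mathbf{F}^{\tau})'(\mathbf{w}^*)$ differs from $\mathbf{F}'(\mathbf{w}^*)$ only in its last row, and that this last row is a nonnegative combination of data whose effect on the sign of the principal eigenvalue can be tracked explicitly. First I would recall that by definition of the homotopy \eqref{Def Hom 3}, for $i=1,\dots,7$ we have $F_i^{\tau}=F_i$, so the first seven rows of the Jacobian are independent of $\tau$. For the eighth row, differentiating \eqref{Def Hom 3} gives
\begin{equation}
\frac{\partial F_8^{\tau}}{\partial v_j}(\mathbf{w}^*)=\alpha^{\tau}\frac{\partial F_8}{\partial v_j}(\mathbf{w}^*)+\beta^{\tau}\frac{dP}{dT}(T^*)\,\delta_{j8}+\gamma^{\tau}g'(T^*)\,\delta_{j8},
\label{row8tau}
\end{equation}
where $T^*$ is the thrombin component of $\mathbf{w}^*$ and $\delta_{j8}$ is the Kronecker symbol (recall $P$ and $g$ depend on $v$ only through $T=v_8$). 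Since $\mathbf{F}^{\tau}$ is a monotone system on $\mathcal C$ by the first Lemma above, its Jacobian at any point of $\mathcal C$ has nonnegative off-diagonal entries; hence the Perron--Frobenius theory applies to $(\mathbf{F}^{\tau})'(\mathbf{w}^*)+\lambda I$ for $\lambda$ large, and the principal eigenvalue is real, simple, and associated with positive left and right eigenvectors.

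Next I would exploit the block structure. Write the Jacobian in the ordering $(v_1,\dots,v_7,T)$ as
\begin{equation}
(\mathbf{F}^{\tau})'(\mathbf{w}^*)=\begin{pmatrix} A & b \\ \alpha^{\tau} c^{\top} & \alpha^{\tau} e + \beta^{\tau}P'(T^*)+\gamma^{\tau}g'(T^*) \end{pmatrix},
\label{blockform}
\end{equation}
where $A$ is the $7\times7$ block $\big(\partial F_i/\partial v_j\big)_{i,j\le 7}$, $b=\big(\partial F_i/\partial v_8\big)_{i\le 7}$, $c^{\top}=\big(\partial F_8/\partial v_j\big)_{j\le 7}$ and $e=\partial F_8/\partial v_8$, all evaluated at $\mathbf{w}^*$ and independent of $\tau$. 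The key algebraic input is Lemma \ref{lem F_i(phi,T)=0}: the relation \eqref{Eq dFi/dT} says precisely that $A\,\phi'(T^*)+b=0$, i.e. the vector $(\phi'(T^*),1)^{\top}$ lies in a natural one-dimensional "reduced" direction. Using this, I would compute the characteristic polynomial of \eqref{blockform} via the Schur complement: for $\mu$ not an eigenvalue of $A$,
\begin{equation}
\det\big((\mathbf{F}^{\tau})'(\mathbf{w}^*)-\mu I\big)=\det(A-\mu I)\Big(\alpha^{\tau}e+\beta^{\tau}P'(T^*)+\gamma^{\tau}g'(T^*)-\mu-\alpha^{\tau}c^{\top}(A-\mu I)^{-1}b\Big).
\label{schur}
\end{equation}
One then checks, again using $A\phi'(T^*)+b=0$ and the analogous scalar identity $c^{\top}\phi'(T^*)+e=\frac{d}{dT}\big(F_8(\phi(T),T)\big)\big|_{T^*}$, that at $\mu=0$ the bracket in \eqref{schur} evaluates to a fixed positive multiple of $P^{\tau\,\prime}(T^*)=P'(T^*)+\gamma^{\tau}g'(T^*)$ — which, since $g$ is supported in $(\bar T,T^-)$ and vanishes identically near each of the three zeros $T^+=0,\bar T,T^-$ (so $g'(T^*)=0$ there), is just $P'(T^*)$, independent of $\tau$. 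Since $\det(A)\ne 0$ (the spectrum of $A$ is bounded away from $0$ because $A$ is the Jacobian of the strictly stable fast subsystem $F_1,\dots,F_7$ at an equilibrium — its eigenvalues all have negative real parts), the sign of $\det\big((\mathbf{F}^{\tau})'(\mathbf{w}^*)\big)$, hence the sign of the product of its eigenvalues, is constant in $\tau$; combined with Perron--Frobenius (the principal eigenvalue is the only one that can cross zero, and it does so transversally because it is simple), this forces the sign of the principal eigenvalue itself to be constant in $\tau$.

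I expect the main obstacle to be the reduction \eqref{schur}--and in particular verifying cleanly that the Schur complement, as a function of $\mu$ near $0$, does not itself vanish for $\tau$ in between (which would make the principal eigenvalue of the full matrix cross $0$ even though $\det$ never changes sign — impossible for a simple real eigenvalue, but one must argue it). The clean way around this is to avoid tracking $\det$ at $\mu=0$ in isolation and instead argue by continuity and simplicity: the principal eigenvalue $\lambda_{\mathrm{pr}}(\tau)$ depends continuously on $\tau$, is always real and simple, and $\lambda_{\mathrm{pr}}(\tau)=0$ would force $\det\big((\mathbf{F}^{\tau})'(\mathbf{w}^*)\big)=0$, which we have just shown is impossible since that determinant equals $\det(A)\cdot(\text{positive})\cdot P'(T^*)\ne 0$ by \eqref{Condition P}. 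Hence $\lambda_{\mathrm{pr}}(\tau)$ never vanishes, so by continuity its sign is constant on $[0,1]$, and in fact equals $\operatorname{sgn} P'(T^*)$ at each of $\mathbf{w}^+,\bar{\mathbf{w}},\mathbf{w}^-$ — negative, positive, negative respectively, matching \eqref{Condition P}. The only genuinely computational point, the evaluation of the Schur complement at $\mu=0$ using Lemma \ref{lem F_i(phi,T)=0}, is a short determinant manipulation and I would relegate its details to a line or two.
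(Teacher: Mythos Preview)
Your proposal is correct and follows essentially the same route as the paper: both arguments use Perron--Frobenius to get a real principal eigenvalue varying continuously in $\tau$, then show the Jacobian $(\mathbf{F}^{\tau})'(\mathbf{w}^*)$ is nonsingular for every $\tau$ (so that eigenvalue never crosses $0$), with the key computation in each case being that Lemma~\ref{lem F_i(phi,T)=0} forces the reduced scalar condition to be $P'(T^*)\ne 0$. Your Schur-complement formulation is just the block-matrix repackaging of the paper's component-by-component kernel computation (solve the first seven rows for $q_i=\phi_i'(T^*)q_8$, then substitute into the eighth), and your observation that $g'(T^*)=0$ at each equilibrium is exactly what makes the paper's final line read $P'(T^*)q_8$ rather than $(P^{\tau})'(T^*)q_8$.
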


\begin{proof}
Since the function $\mathbf{F}^{\tau}$ is monotone on $\mathcal{C}$ for every $\tau\in[0,1]$, then the Perron-Frobenius theorem guarantees that the principal eigenvalue, that is, the eigenvalue with maximal real part  of the Jacobian matrix is real.

Let $\mathbf{w}^*=(w_1^*,...,w_7^*,T^*)$, $T^*\geq 0$ be one of the zeros of $\mathbf{F}^{\tau}$.
The stability of $\mathbf{w}^*$ is preserved during the homotopy if the principal eigenvalue of the Jacobian matrix does not change sign as $\tau$ varies. Let us denote by $\mathcal{J}^{\tau}=(\mathbf{F}^{\tau})'$ the Jacobian matrix of $\mathbf{F}^{\tau}$.  To prove that the principal eigenvalue does not change sign as $\tau$ varies, we will show that the determinant of $\mathcal{J}^{\tau}(\mathbf{w}^*)$ is different from zero for all values of $\tau \in [0,1]$. To this end, let us check that  $\text{Ker}\mathcal{J}^{\tau}(\mathbf{w}^*)=\{ \mathbf{0} \}$.

Let $\mathbf{q} \in \text{Ker}\mathcal{J}^{\tau}(\mathbf{w}^*)$. Then we have:
\begin{equation}\label{Ji q}
(\mathcal{J}^{\tau}(\mathbf{w}^*)\mathbf{q})_i=\sum_{j=1}^7\frac{\partial F_i}{\partial v_j}(\mathbf{w}^*)q_j +\frac{\partial F_i}{\partial v_8}(\mathbf{w}^*)q_8=0.
\end{equation}
For $i=3$ we get:

\begin{equation}\label{J3 q}
\frac{\partial F_3}{\partial v_3}(\mathbf{w}^*)q_3 +\frac{\partial F_3}{\partial T}(\mathbf{w}^*)q_8 =0 .
\end{equation}
Hence,

$$ q_3=- {\frac{\partial F_3}{\partial v_3}(\mathbf{w}^*)}/{\frac{\partial F_3}{\partial T}(\mathbf{w}^*)}q_8. $$
From Lemma \ref{lem F_i(phi,T)=0} applied for $i=3$ it follows that

\begin{equation}
\frac{d F_3}{d T}(\mathbf{w}^*)=\frac{\partial F_3}{\partial v_3}(\mathbf{w}^*)\phi_3'(T) +\frac{\partial F_3}{\partial v_8}(\mathbf{w}^*).
\label{Eq dF_3/dT}
\end{equation}
Hence, applying \eqref{Eq dF_3/dT} and \eqref{J3 q} we have:

$$ (\mathcal{J}^{\tau}(\mathbf{w}^*)\mathbf{q})_3 = 0 \iff q_3=\phi_3'(T^*)q_8. $$
We proceed with the other components in the following order: $i=4,7,5,2,$ $6, 1$. This successively provides:
 $q_i=\phi_i'(T^*)q_8 , \;\; i=1,...,7.$
Then the last component of $\mathcal{J}^{\tau}(\mathbf{w}^*).q$ is given by the equality:

$$ (\mathcal{J}_8^{\tau}(\mathbf{w}^*)\mathbf{q})_8 = \frac{\partial F_8^{\tau}}{\partial v_8}(\mathbf{w}^*)q_8+\frac{\partial F_8^{\tau}}{\partial v_1}(\mathbf{w}^*)q_1+\frac{\partial F_8^{\tau}}{\partial v_6}(\mathbf{w}^*)q_6 = $$

$$ (\frac{\partial F_8^{\tau}}{\partial v_8}(\mathbf{w}^*)+\frac{\partial F_8^{\tau}}{\partial v_1}(\mathbf{w}^*)\phi_1'(T^*)+\frac{\partial F_8^{\tau}}{\partial v_6}(\mathbf{w}^*)\phi_6'(T^*))q_8 = P'(T^*)q_8. $$
By virtue of \eqref{Condition P}, $P'(T^*)\ne 0$. Hence $q_8=0$,  implying $\mathbf{q}=\mathbf{0}$ and concluding the proof.
\end{proof}

Proposition \ref{lem Stability} affirms that stability of the stationary points $\mathbf{w}^-$, $\bar{\mathbf{w}}$ and $\mathbf{w}^+$ does not depend on $\tau$. Hence their stability is same for $\mathbf{F}$ and $\mathbf{F}^{\tau}$ for all $\tau \in [0,1]$. Consider $\tau=1$.   The Jacobian matrix $\mathcal{J}^1$ can be reduced to a triangular matrix by taking the component in the order $i=8,3,4,7,5,2,6, 1$. Then the eigenvalue of matrix $\mathcal{J}^1(\mathbf{w}^*)$ are given by its diagonal elements:
$$ P'(T^*),\ -h_1,\ -h_2,\ -k_3T^*-h_3,\ -k_4T^*-h_4,\ -h_5,\ -h_6,\ -k_7T^*-h_7, $$
where $T^*$ refers to either $T^+$, $\bar{T}$ or $T^-$. The last seven eigenvalues are negative for any of the points $\mathbf{w}^-$, $\bar{\mathbf{w}}$ and $\mathbf{w}^+$. Consequently, their stability depends only on the sign of $P'(T^*)$: if $P'(T^*)<0$ the point is stable, if $P'(T^*)>0$ the point is unstable. Thus, the points $\mathbf{w^-}$ and $\mathbf{w^+}$ are stable, while $\bar{\mathbf{w}}$ is unstable.

\section{Functional spaces and a priori estimates}
\label{Sec Estimation}

In this section we will introduce functional spaces and will obtain a priori estimates of solutions.

\subsection{H\"older spaces}

We introduce H\"older space $\mathcal{C}^{k+\alpha}(\R_+)$, $\alpha \in (0,1)$ consisting of vector-functions from $\mathcal{C}^k$ bounded on $\R_+$ together with their derivatives up to the order $k$, and the derivative of order $k$ satisfies  H\"older condition. This space is equipped with the usual H\"older norm. We set:
 \begin{equation}
\begin{cases}
E^1 = \{ \mathbf{w} \in \mathcal{C}^{2+\alpha}(\R_+), \mathbf{w}'(0)=0 \}, \\
E^2=\mathcal{C}^{\alpha}(\R_+).
\end{cases}
\end{equation}
We now introduce the weighted spaces $E^1_{\mu}$ and $E^2_{\mu}$ where $\mu$ is the weight function, $\mu(x)=\sqrt{1+x^2}$. The norm in these spaces is defined by the equality:

\begin{equation}
\|\mathbf{w}\|_{E^i_{\mu}}=\|\mathbf{w}\mu\|_{E^i}, \quad i=1,2.
\end{equation}
Thus, we consider the operator $A^{\tau}:  E^1_{\mu} \to E^2_{\mu}$,

\begin{equation}\label{Def A tau}
 A^\tau(\mathbf{w}) = D\mathbf{w}''+\mathbf{F}^{\tau}(\mathbf{w}).
\end{equation}
We are looking for positive monotone solutions of the equation $A^{\tau}(\mathbf{w})=\mathbf{0}$ such that $\mathbf{w}\in E^1_{\mu}$.



\subsection{Bounded solutions}

We will obtain a priori estimates of solutions of the  equation

\begin{equation}\label{Eq stat tau}
D\mathbf{w}''+\mathbf{F}^{\tau}(\mathbf{w})=\mathbf{0}
\end{equation}
with the {boundary} conditions:

\begin{equation}\label{Eq stat tau boundaries}
\mathbf{w}'(0)=\mathbf{w}(+\infty)=\mathbf{0}
\end{equation}
assuming that

\begin{equation}\label{Eq stat tau mono}
\mathbf{w}'<\mathbf{0} \quad \text{ for } x>0.
\end{equation}
Clearly, a solution of \eqref{Eq stat tau}-\eqref{Eq stat tau boundaries} with  condition \eqref{Eq stat tau mono} is positive.
The corresponding wave problem  \eqref{Eq wave} becomes as follows:

\begin{align}
D\mathbf{u}''+c^{\tau}\mathbf{u}'+\mathbf{F}^{\tau}(\mathbf{u})=0, \label{Eq wave tau equation}\\
\mathbf{u}(\pm \infty)=\mathbf{w}^{\pm}\label{Eq wave tau condition}.
\end{align}

We begin with $L^{\infty}$ of solutions of problem  \eqref{Eq stat tau}, \eqref{Eq stat tau boundaries}. We first prove the following lemma.

\begin{lem}\label{lem Psi}
There exists a function $\mathbf{\Psi}(s)=(\psi_1,...,\psi_8)$ defined for $s\geq 0$ such that:
\begin{equation}
\begin{cases}
\mathbf{\Psi}(0)=\mathbf{w}^-, \mathbf{\Psi}>\mathbf{w}^- \quad \text{ for } s>0, \\
\psi_i\rightarrow w_{i,\ell} \text{ as } s\rightarrow \infty , \;\; i=1,...,7 ,\\
\mathbf{F}^{\tau}(\mathbf{\Psi})<0 \quad \text{ for } s>0 \text{ and } \tau\in[0,1],
\end{cases}
\end{equation}
where $w_{i,\ell}=\underset{s\rightarrow +\infty}{\text{lim}} \phi_i(s)$ for $1\leq i \leq 7$.
\end{lem}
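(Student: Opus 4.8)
The plan is to construct $\mathbf{\Psi}$ explicitly, component by component, adapting the standard construction of an upper barrier for monotone reaction-diffusion systems. The key observation is that $\mathbf{w}^-$ is a stable zero of $\mathbf{F}^\tau$ (established via Proposition \ref{lem Stability} and the triangular reduction at $\tau=1$), so one expects the existence of a curve leaving $\mathbf{w}^-$ along which $\mathbf{F}^\tau$ becomes strictly negative. First I would set $\psi_8(s) = T^- + s$ for the thrombin component, which is the component that effectively drives the system, and then define $\psi_i(s) = \phi_i(\psi_8(s)) + \epsilon_i \eta(s)$ for $i=1,\dots,7$, where $\phi_i$ are the functions from Appendix A, $\eta$ is a smooth nonnegative function with $\eta(0)=0$, $\eta>0$ for $s>0$, bounded, and $\epsilon_i>0$ are small constants to be chosen. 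Since $\phi_i$ is increasing (each $\phi_i'(T)>0$ on the relevant range, as follows from the explicit formulas in the appendix) and tends to a finite limit $w_{i,\ell}$, the first two bullet conditions are immediate: $\mathbf{\Psi}(0) = (\phi(T^-),T^-) = \mathbf{w}^-$, $\mathbf{\Psi} > \mathbf{w}^-$ for $s>0$, and $\psi_i \to w_{i,\ell}$ (for this last point one wants $\eta(s)\to 0$, or more simply one takes $\eta$ with small total variation and notes $\phi_i(\psi_8(s))\to w_{i,\ell}$; alternatively allow the limit to be $w_{i,\ell} + \epsilon_i\lim\eta$ and absorb the constant — but to match the statement exactly I would require $\eta(s)\to 0$ as $s\to\infty$, e.g.\ $\eta(s) = s/(1+s^2)$ up to scaling, or a bump-like profile).

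The heart of the matter is the third condition, $\mathbf{F}^\tau(\mathbf{\Psi}(s)) < 0$ for $s>0$ and all $\tau\in[0,1]$. For components $i=1,\dots,7$, I would use Lemma \ref{lem F_i(phi,T)=0}: since $F_i(\phi(T),T)=0$ identically and $F_i^\tau = F_i$ for these indices, a Taylor expansion gives
\[
F_i(\mathbf{\Psi}(s)) = F_i(\phi(\psi_8),\psi_8) + \sum_{j=1}^{7}\frac{\partial F_i}{\partial v_j}(\cdots)\,\epsilon_j\eta(s) + O(\eta^2)
= \frac{\partial F_i}{\partial v_i}(\mathbf{w}^-)\,\epsilon_i\eta(s) + (\text{cross terms}) + O(\eta^2).
\]
The diagonal entries $\partial F_i/\partial v_i$ are strictly negative (they are $-h_i$, $-k_iT^*-h_i$, etc., as seen in the eigenvalue list), while the off-diagonal entries $\partial F_i/\partial v_j$ are nonnegative by monotonicity \eqref{F monotone}. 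Thus, by choosing the $\epsilon_i$ in a suitable hierarchical order — making each $\epsilon_i$ much larger than the $\epsilon_j$ appearing in the cross terms that feed into equation $i$, which is possible precisely because the system has the cascade structure exploited in the proof of Proposition \ref{lem Stability} (order $i=3,4,7,5,2,6,1$) — the negative diagonal term dominates and $F_i(\mathbf{\Psi}(s)) < 0$ for small $s$; for $s$ bounded away from $0$ one uses strict inequalities and compactness. For the last component, $F_8^\tau(\mathbf{\Psi}) = \alpha^\tau F_8(\mathbf{\Psi}) + \beta^\tau P(\psi_8) + \gamma^\tau g(\psi_8)$ by \eqref{Def Hom 3}; here $g(\psi_8)=0$ since $\psi_8 = T^- + s \geq T^-$ lies outside $\Supp g \subset(\bar T, T^-)$, and $P(\psi_8) = P(T^-+s) < 0$ for $s>0$ because $P'(T^-)<0$ and $T^-$ is the largest zero of $P$ (so $P<0$ beyond it). Finally $F_8(\mathbf{\Psi}) = P(\psi_8) + (\text{terms in }\epsilon_i\eta)$ by the same expansion as in \eqref{Equation P tau}, and the $P(\psi_8)<0$ term dominates for small $\epsilon_i$; since $\alpha^\tau,\beta^\tau\geq 0$ and $\alpha^\tau+\beta^\tau = 1$ (for $\tau\geq\tau_1$) or $\alpha^\tau=1,\beta^\tau=0$ (for $\tau<\tau_1$), the convex-combination structure means the bound is uniform in $\tau$.

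The main obstacle I anticipate is making the hierarchical choice of the $\epsilon_i$ rigorous and genuinely uniform in $s$ on $(0,\infty)$ and in $\tau\in[0,1]$ simultaneously: near $s=0$ one works with the linearization and needs the cascade ordering to close, while for $s$ large one needs $\phi_i(\psi_8(s))$ to have settled near $w_{i,\ell}$ and the $O(\eta^2)$ remainders to stay controlled — this may force a careful choice of the profile $\eta$ (e.g.\ with $\|\eta\|_\infty$ small and $\eta$ eventually decaying) rather than an arbitrary one. A cleaner alternative, which I would pursue if the direct estimate gets messy, is to obtain $\mathbf{\Psi}$ as a trajectory of the ODE $\dot{\mathbf{\Psi}} = -\mathbf{F}^\tau(\mathbf{\Psi})$ (or a $\tau$-independent vector field pointing into the region $\{\mathbf{F}^\tau<0\}$) emanating from $\mathbf{w}^-$ along the unstable-type direction; the sign condition $\mathbf{F}^\tau(\mathbf{\Psi})<0$ would then be built in by construction, and the convergence $\psi_i\to w_{i,\ell}$ would follow from the structure of the $\omega$-limit set, though one then has to check $\tau$-uniformity of the vector field choice, which again reduces to the monotonicity and the sign of $P'$.
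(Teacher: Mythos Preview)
Your approach is correct and close in spirit to the paper's: both start from the curve $(\boldsymbol{\phi}(T^-+s),\,T^-+s)$ along which $F_i=0$ for $i\le 7$ and $F_8^\tau<0$, and both perturb it using the cascade order $3,4,7,5,2,6,1$ to force $F_i<0$. The implementation differs: you add $\epsilon_i\eta(s)$ to each $\phi_i(T^-+s)$, whereas the paper perturbs \emph{inside the argument}, setting $\psi_i(s)=\phi_i(T^-+\kappa_i s)$ with constants $\kappa_1>\kappa_6>\kappa_2>\kappa_5>\kappa_3>\kappa_4>\kappa_7>\kappa_8$, then taking $\kappa_i=1+\varepsilon\lambda_i$ with $\varepsilon$ small to recover $F_8^\tau<0$ from $P^\tau(T^-+s)<0$.

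The paper's multiplicative perturbation buys exactly what you flagged as the main obstacle: because each $F_i$ is bilinear and the identity $F_i(\boldsymbol{\phi}(T),T)=0$ holds for every $T$, the inequality $F_i(\boldsymbol{\Psi}(s))<0$ follows \emph{exactly} for all $s>0$ from $\kappa_j<\kappa_i$ and monotonicity, with no Taylor remainder or compactness argument needed; the limits $\psi_i\to w_{i,\ell}$ are automatic since $\phi_i(T)\to w_{i,\ell}$; and the $\tau$-uniformity reduces to the single $\varepsilon$-choice for $F_8$. Your additive scheme also closes (since the $F_i$ are bilinear the ``$O(\eta^2)$'' terms can be computed explicitly and controlled by the same hierarchy, after which one scales all $\epsilon_i$ down together to handle $F_8$), but it forces the extra bookkeeping with the profile $\eta$ that you anticipated; the paper's choice simply sidesteps it.
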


\begin{proof}
We set $\psi^0_i(s)=\phi_i(T^-+s)$, $i=1,...,7$,  $\psi^0_8(s)=T^-+s$, and $\mathbf{\Psi}^0(s)=(\psi^0_1(s),...,\psi^0_8(s))$.
Then we have $\mathbf{\Psi}^0(0)=w^-$. Since the functions $\phi_i$ are increasing, then $\mathbf{\Psi}^0(s)>\mathbf{w}^-$ for $s>0$.
Finally,  $\underset{s\rightarrow \infty}{\text{lim}} \psi^0_i(s)=w_{i,\ell}$ by definition of these functions.

We note that $F^{\tau}_i(\mathbf{\Psi}^0(s))=0$, $i=1,...,7$, 
 $F_8^{\tau}(\mathbf{\Psi}^0(s))=P^{\tau}(T^-+s)<0$ for $s > 0$.
Next, set $\psi_i(s)=\phi_i(T^-+\kappa_i s)$, where $\kappa_i$ are some numbers,
$\mathbf{\Psi}(s)=(\psi_1(s),...,\psi_8(s))$.
It can be directly verified that $F^{\tau}_i(\mathbf{\Psi}(s))<0$, $i=1,...7$ for $s > 0$ if the following conditions are satisfied:

\begin{equation}\label{Cond kappa}
  \kappa_1>\kappa_6>\kappa_2>\kappa_5>\kappa_3>\kappa_4>\kappa_7> \kappa_8 .
\end{equation}
%
%
%
%
%

Consider now $F^{\tau}_8(\mathbf{\Psi}(s))$.
Let $\kappa_i = 1 + \varepsilon \lambda_i$, where $\lambda_i>0$ are chosen to satisfy \eqref{Cond kappa}.
Since $F_8^{\tau}(\mathbf{\Psi}^0(s))=P^{\tau}(T^-+s)<0$ for $s > 0$, then
$F^{\tau}_8(\mathbf{\Psi}(s)) < 0$ for $s \geq s_0 > 0$ with any $s_0$ and all $\varepsilon$ sufficiently small.
It remains to verify this inequality for $0 < s \leq s_0$. Since $(P^{\tau})'(T^-) < 0$, then $(F^{\tau}_8)'(\mathbf{\Psi}(0)) < 0$
for all $\varepsilon$ sufficiently small. Therefore,
$F^{\tau}_8(\mathbf{\Psi}(s)) < 0$ for $0 < s \leq s_0$ and $s_0$ sufficiently small.

%
%

\end{proof}

We can now obtain a uniform estimate of solutions.

\begin{prop}
Consider a solution $\widehat{\mathbf{w}}$ of problem \eqref{Eq stat tau}, \eqref{Eq stat tau boundaries} with condition \eqref{Eq stat tau mono}. Then
$$\widehat{\mathbf{w}}(x)\leq \mathbf{w}^- \;\; \text{ for } \;\; x\geq 0.$$
\label{lem w^-}
\end{prop}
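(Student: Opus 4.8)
\textbf{Proof plan for Proposition \ref{lem w^-}.}
The plan is to use the function $\mathbf{\Psi}$ constructed in Lemma \ref{lem Psi} as a barrier/supersolution and to slide it until it touches $\widehat{\mathbf{w}}$ from above, then invoke the maximum principle for monotone (cooperative) systems to force the contact to be impossible unless $\widehat{\mathbf{w}}\le\mathbf{w}^-$. First I would set, for a parameter $s\ge 0$, the shifted family $\mathbf{\Psi}_s(x):=\mathbf{\Psi}(s)$ viewed as a constant-in-$x$ function, or more precisely compare $\widehat{\mathbf{w}}$ with the family of constants $\{\mathbf{\Psi}(s)\}_{s\ge 0}$. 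Since $\widehat{\mathbf{w}}$ is bounded on $\R_+$ (it lies in $E^1_\mu$, hence tends to $\mathbf{0}$ at $+\infty$ and is continuous on $[0,\infty)$), and since $\mathbf{\Psi}(s)\to(\mathbf{w}_\ell,\infty)$ componentwise as $s\to\infty$ with $\psi_8(s)=T^-+s\to\infty$, for $s$ large enough we have $\mathbf{\Psi}(s)>\widehat{\mathbf{w}}(x)$ for all $x\ge 0$. Now decrease $s$ down to $0$ and let $s^*$ be the infimum of the values of $s$ for which $\mathbf{\Psi}(s)\ge\widehat{\mathbf{w}}(x)$ for all $x\ge 0$. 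If $s^*=0$ we are done, since $\mathbf{\Psi}(0)=\mathbf{w}^-$ gives exactly $\widehat{\mathbf{w}}(x)\le\mathbf{w}^-$.

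Suppose for contradiction $s^*>0$. By continuity, $\mathbf{\Psi}(s^*)\ge\widehat{\mathbf{w}}(x)$ for all $x\ge 0$, and there is a component $i$ and either a point $x_0\in[0,\infty)$ or a sequence $x_n\to\infty$ at which equality is attained or approached. The behaviour at infinity is harmless: since $\widehat{\mathbf{w}}(x)\to\mathbf{0}$ and $\mathbf{\Psi}(s^*)>\mathbf{w}^-\ge\mathbf{0}$ strictly (for $s^*>0$ we even have $\psi_8(s^*)=T^-+\kappa_8 s^*>T^->0$, and similarly $\psi_i(s^*)>w_i^-\ge 0$), contact at infinity cannot occur for $s^*>0$; more carefully one notes $\liminf_{x\to\infty}\big(\mathbf{\Psi}(s^*)-\widehat{\mathbf{w}}(x)\big)=\mathbf{\Psi}(s^*)>\mathbf{0}$. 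Hence the contact is at a finite point $x_0$: there is $i$ with $\widehat{w}_i(x_0)=\psi_i(s^*)$ and $\widehat{w}_j(x_0)\le\psi_j(s^*)$ for all $j$. At an interior contact point ($x_0>0$) this gives $\widehat{w}_i''(x_0)\ge 0$, while at the boundary point $x_0=0$ we use $\widehat{\mathbf{w}}'(0)=\mathbf{0}$ together with a Hopf-type argument to again extract $\widehat{w}_i''(0)\ge 0$ (a positive minimum of $\psi_i(s^*)-\widehat{w}_i$ at the boundary with vanishing derivative forces the second derivative to be $\ge 0$ there, or one reflects evenly across $0$ and treats it as an interior point).

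Now I would use the equation and monotonicity. From \eqref{Eq stat tau} at $x_0$,
\begin{equation}
0 \;=\; D_i\,\widehat{w}_i''(x_0) + F_i^{\tau}\big(\widehat{\mathbf{w}}(x_0)\big) \;\ge\; F_i^{\tau}\big(\widehat{\mathbf{w}}(x_0)\big).
\end{equation}
On the other hand, $\widehat{\mathbf{w}}(x_0)$ and $\mathbf{\Psi}(s^*)$ agree in the $i$-th coordinate and $\widehat{\mathbf{w}}(x_0)\le\mathbf{\Psi}(s^*)$ in all others; the cooperativity \eqref{F tau monotone} of $\mathbf{F}^\tau$ on $\mathcal{C}$ (Lemma 2.1) then gives $F_i^{\tau}(\widehat{\mathbf{w}}(x_0))\le F_i^{\tau}(\mathbf{\Psi}(s^*))<0$, where the strict inequality is precisely the third property in Lemma \ref{lem Psi} (valid for $s^*>0$ and the given $\tau$). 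This contradicts $F_i^{\tau}(\widehat{\mathbf{w}}(x_0))\ge 0$ only if we know $\widehat{w}_i''(x_0)=0$ is not forced, so in fact the chain $0\ge D_i\widehat w_i''(x_0)+F_i^\tau(\widehat{\mathbf w}(x_0))$ combined with $F_i^\tau(\widehat{\mathbf w}(x_0))<0$ forces $\widehat w_i''(x_0)>0$, which is consistent; to close the argument cleanly one instead runs the sliding with a strict barrier, i.e. observes that for $s$ slightly below $s^*$ one still has $\mathbf{\Psi}(s)\ge\widehat{\mathbf w}$ near $x_0$ because the strict inequality $F_i^\tau(\mathbf{\Psi}(s))<0$ is stable, contradicting minimality of $s^*$.

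\textbf{Main obstacle.} The delicate point is the maximum-principle step at the boundary $x_0=0$ and the rigorous exclusion of a contact escaping to $x=+\infty$; both require care because $\R_+$ is unbounded and the comparison function is only a supersolution in the weak differential-inequality sense $\mathbf{F}^\tau(\mathbf{\Psi})<0$ rather than a genuine solution. I expect to handle the boundary via the Neumann condition $\widehat{\mathbf{w}}'(0)=0$ (even reflection), and the behaviour at infinity via the explicit limits $\widehat{\mathbf{w}}(+\infty)=\mathbf{0}$ and $\mathbf{\Psi}(s^*)>\mathbf{w}^-\ge\mathbf{0}$, which make the gap bounded away from zero near infinity; the structure of the cooperative system and Lemma \ref{lem Psi} then do the rest.
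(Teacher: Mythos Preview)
Your sliding argument is essentially the paper's own strategy, but there is a genuine gap at the very first step, and the contact-point computation has a sign muddle.

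\textbf{The main gap: the sliding cannot be initiated.} You claim that for $s$ large, $\mathbf{\Psi}(s)>\widehat{\mathbf w}(x)$ for all $x\ge 0$. But look at Lemma \ref{lem Psi}: for $i=1,\dots,7$ one has $\psi_i(s)=\phi_i(T^-+\kappa_i s)\nearrow w_{i,\ell}$ as $s\to\infty$, so $\psi_i(s)<w_{i,\ell}$ for every finite $s$. Nothing you have said rules out $\widehat w_i(0)\ge w_{i,\ell}$; ``$\widehat{\mathbf w}$ is bounded'' gives no quantitative bound. If some component satisfies $\widehat w_i(0)\ge w_{i,\ell}$, then there is \emph{no} $s$ with $\psi_i(s)\ge \widehat w_i(0)$, the set over which you take the infimum is empty, and the sliding never starts. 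The paper closes exactly this hole with a separate preliminary step: it first proves $\widehat{\mathbf w}(0)<\mathbf w_\ell=(w_{1,\ell},\dots,w_{7,\ell},T^-)$ by a direct argument for each component (e.g.\ if $\widehat w_3(0)\ge w_{3,\ell}$ then $F_3(\widehat{\mathbf w}(0))<0$, hence $\widehat w_3''(0)>0$, contradicting monotonicity), and only then picks $\sigma$ with $\mathbf\Psi(\sigma)\ge\widehat{\mathbf w}(0)$. This step is not optional; you must add it. It also secures $\widehat{\mathbf w}(0)\in\mathcal C$, without which the cooperativity inequality \eqref{F tau monotone} you invoke is not available (e.g.\ $\partial F_8^\tau/\partial v_6=k_8\alpha^\tau(T^0-T)$ is only nonnegative when $T\le T^0$).

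\textbf{The sign at the contact point.} At a touching point $x_0$ where $\widehat w_i(x_0)=\psi_i(s^*)$ and $\widehat w_i\le \psi_i(s^*)$ everywhere, the function $\widehat w_i$ has a \emph{maximum}, so $\widehat w_i''(x_0)\le 0$, not $\ge 0$. The equation then gives $F_i^\tau(\widehat{\mathbf w}(x_0))=-D_i\widehat w_i''(x_0)\ge 0$, and \emph{this} is what contradicts $F_i^\tau(\widehat{\mathbf w}(x_0))\le F_i^\tau(\mathbf\Psi(s^*))<0$. Your displayed inequality goes the wrong way, which is why you end up with the ``only if we know $\widehat w_i''(x_0)=0$ is not forced'' paragraph; once the sign is corrected, the contradiction is immediate and the hand-waving about stability of the strict inequality is unnecessary. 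Note also that because $\widehat{\mathbf w}$ is strictly decreasing, the contact is necessarily at $x_0=0$, so the interior/boundary dichotomy collapses; the paper works entirely at $x=0$ for this reason.
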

\begin{proof}
Since $\widehat{\mathbf{w}}$ is decreasing, we only need to verify this result for $\hat{\mathbf{w}}(0)$. Let us set $\mathbf{w}_{\ell}=(w_{1,\ell},...,w_{7,\ell},T^-)$ where $w_{i,\ell}=\underset{s\rightarrow +\infty}{\text{lim}} \phi_i(s)$ for $1\leq i \leq 7$. We claim that:
\begin{equation}
\widehat{\mathbf{w}}(0)<\mathbf{w}_{\ell}.
\label{Ine w<wl}
\end{equation}
We will first show that $\widehat{w}_i$ is bounded by $w_{i,\ell}$ for $i=1,...,7$.
Suppose that this is not the case and $\hat{w}_i(0)\geq w_{i,\ell}$ for some $i$.
Since $\underset{s\rightarrow +\infty}{\text{lim}} \phi_i(s)=w_{i,\ell}$, and $\phi_i(s)$ is increasing for $s>0$, then
$\widehat w_i(0)>\phi_i(s)$ for $s>0$.

Let us now proceed in the same order as for the computation of the functions $\phi_i$, and show that the former inequality leads to a contradiction. Suppose that for $i=3$ we have the inequality $w_3(0)>\phi_3(s)$ for all $s > 0$. It follows that  $F_3(\widehat{\mathbf{w}}(0)) < 0$. From  the equation for the component $\widehat w_3(x)$ it follows that $\widehat w_3''(0)> 0$. This contradicts the monotonicity of the solution.
%
%
The same approach applied the others components proves inequality \eqref{Ine w<wl}.
Next, assume that $\widehat{w}_8(0)\geq T^-$. Since $\widehat{w}_i(0) < w_{i,\ell}$, it follows that $F_8^{\tau}(\widehat{\mathbf{w}}(0)) < 0$.
As before, inequality $\widehat w_8''(0)> 0$  leads to the contradiction.

In order to prove Proposition \ref{lem w^-}, we consider the
function $\Psi(s)$ from Lemma \ref{lem Psi}. From \eqref{Ine w<wl} we have the inequality $\underset{s\rightarrow +\infty}{\text{lim}}\Psi(s)=\mathbf{w}_{\ell}> \hat{\mathbf{w}}(0)$. Hence it exists $\sigma$ such that $\Psi(\sigma)\geq \widehat{\mathbf{w}}(0)$. Denote $\mathbf{w}^{\sigma}=\Psi(\sigma)$. From Lemma \ref{lem Psi} it follows that $\mathbf{F}^{\tau}(\mathbf{w}^{\sigma})<0$.

Consider the domain
\begin{equation}
D(\mathbf{w}^{\sigma})=\{\mathbf{w} | w_i^+\leq w_i \leq w_i^{\sigma},i\in\{1,...,7\}, w_8<T^- \}
\end{equation}
and its boundaries $\Gamma_i(\mathbf{w}^{\sigma})=\{\mathbf{w} | w_i=w_i^{\sigma}, w_j^+\leq w_j \leq w_j^{\sigma}, j\ne i\}$ for $i=1,...,7$.

Since $\widehat{\mathbf{w}}(0)<\mathbf{w}^{\sigma}$ and $\widehat{\mathbf{w}}(x)$ is decreasing, it follows that $\widehat{\mathbf{w}}\in D(w^{\sigma})$.
Thanks to the monotony of $\mathbf{F}^{\tau}$ given in \eqref{F tau monotone}, for every $\mathbf{w}\in \Gamma_i(\mathbf{w}^{\sigma})$ we have $F_i^{\tau}(\mathbf{w})\leq F_i^{\tau}(\mathbf{w}^{\sigma})<0$.

We now decrease the value of $\sigma$.
If the lemma is not true, then it exists $\sigma$ and a component $\widehat{w_i}(0)>w_i^-$ with $\widehat{w_i}\in \Gamma_i(\mathbf{w}^{\sigma})$.
 We fix $\sigma$, and the corresponding $\mathbf{w}^{\sigma}$,  the minimal value for which $\widehat{w_i}\in \Gamma_i(\mathbf{w}^{\sigma})$ holds.
 Then $\widehat{w_i}(0)=w_i^{\sigma}$ and $\widehat{w_j}(0)\leq w_j^{\sigma}$ for all $j\ne i$, $j<8$.
The properties of $D(\mathbf{w}^{\sigma})$ assure that $F_i^{\tau}(\widehat{\mathbf{w}}(0))<0$. Since $\widehat{\mathbf{w}}$ is a solution \eqref{Eq stat tau}, it leads to $\widehat{w_i}''(0)>0$ which is impossible since $\hat{w_i}'(0)=0$ and $\hat{w_i}'(x)<0$ for $x>0$.

\end{proof}

We will now obtain estimates of solutions in H\"older spaces.

\begin{thm}\label{thm bound weighed}
 Let condition \eqref{Condition P} be satisfied and the speed $c^{\tau}$ in problem \eqref{Eq wave tau equation}, \eqref{Eq wave tau condition} be positive for all $\tau$ in $[0,1]$.
Then there exists a constant $R$ such that for all $\tau$ and all solutions $\mathbf{w}$ of problem \eqref{Eq stat tau},\eqref{Eq stat tau boundaries} with condition \eqref{Eq stat tau mono} the following estimate holds:
\begin{equation}\label{est}
  \|\mathbf{w}\|_{E^1_{\mu}}\leq R .
\end{equation}
\end{thm}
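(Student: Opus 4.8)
The plan is to establish the weighted estimate in two stages: first an $L^\infty$ (indeed $\mathcal{C}^{2+\alpha}$) bound on $\mathbf{w}$ that is uniform in $\tau$, then the decay estimate that upgrades the unweighted bound to a bound in $E^1_\mu$ using the positivity of the wave speed $c^\tau$. The first stage is essentially done: Proposition \ref{lem w^-} already gives $\mathbf{0} \le \mathbf{w}(x) \le \mathbf{w}^-$ for $x \ge 0$, uniformly in $\tau$, so $\|\mathbf{w}\|_{L^\infty(\R_+)}$ is controlled. Since $\mathbf{F}^\tau$ is smooth and depends continuously on $\tau \in [0,1]$, $\mathbf{F}^\tau(\mathbf{w})$ is uniformly bounded on the compact set $\mathcal{C}$; interior elliptic (Schauder) estimates applied to the equation $D\mathbf{w}'' = -\mathbf{F}^\tau(\mathbf{w})$, together with the boundary condition $\mathbf{w}'(0)=0$, then yield a uniform bound $\|\mathbf{w}\|_{\mathcal{C}^{2+\alpha}(\R_+)} \le R_0$ independent of $\tau$ and of the particular solution.

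The second, and main, stage is to show $\mathbf{w}(x) \to 0$ exponentially as $x \to +\infty$ at a rate independent of $\tau$, which then makes $\|\mathbf{w}\mu\|_{\mathcal{C}^{2+\alpha}}$ finite and bounded since $\mu(x) = \sqrt{1+x^2}$ grows only polynomially. Here is where positivity of $c^\tau$ enters. The zero $\mathbf{w}^+ = \mathbf{0}$ is a stable stationary point: by the analysis following Proposition \ref{lem Stability}, the Jacobian $(\mathbf{F}^\tau)'(\mathbf{0})$ has principal eigenvalue $P'(0) < 0$ (and all other eigenvalues negative). One then linearizes the pulse equation near $x = +\infty$: writing $\mathbf{w} = \mathbf{0} + $ small, the equation is asymptotically $D\mathbf{w}'' + (\mathbf{F}^\tau)'(\mathbf{0})\mathbf{w} \approx 0$, whose decaying solutions behave like $e^{-\lambda x}$ with $\lambda$ determined by $\det(D\lambda^2 + (\mathbf{F}^\tau)'(\mathbf{0})) = 0$; since $(\mathbf{F}^\tau)'(\mathbf{0})$ is negative (its principal eigenvalue negative) and $D > 0$, such a positive $\lambda$ exists and stays bounded away from $0$ uniformly in $\tau$ by continuity. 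The positivity of $c^\tau$ is what guarantees, via the comparison between the pulse and the wave (this is the qualitative content linking \eqref{Eq stat tau} with \eqref{Eq wave tau equation}), that the monotone decaying solution actually realizes this exponential decay rather than merely approaching $0$ slowly; concretely, one constructs a supersolution of the form $\mathbf{w}^-\! e^{-\lambda x}$ on $[x_1,\infty)$ for a suitable positive $\lambda$ and large $x_1$, using that the reaction term is superlinearly small near $\mathbf{0}$, and invokes the monotone structure \eqref{F tau monotone} to conclude $\mathbf{w}(x) \le C e^{-\lambda x}$.

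I expect the delicate point to be extracting the \emph{uniform-in-$\tau$} exponential rate and, relatedly, making rigorous the role of $c^\tau > 0$: one must rule out the borderline behavior in which the decay rate degenerates as $\tau$ varies, and one must verify that the constant $x_1$ beyond which the supersolution argument applies can be chosen uniformly. This should follow from the uniform $\mathcal{C}^{2+\alpha}$ bound of stage one (which gives equicontinuity, hence a uniform modulus of smallness of $\mathbf{w}(x)$ for $x$ large via a contradiction/compactness argument over $\tau \in [0,1]$) combined with the uniform spectral gap of $(\mathbf{F}^\tau)'(\mathbf{0})$. Once exponential decay of $\mathbf{w}$ is in hand, differentiating the equation and bootstrapping gives the same decay for $\mathbf{w}'$ and $\mathbf{w}''$, so $\mathbf{w}\mu$, $(\mathbf{w}\mu)'$, $(\mathbf{w}\mu)''$ are all bounded in $\mathcal{C}^\alpha(\R_+)$, which is precisely the estimate \eqref{est} with a constant $R$ depending only on the data and the uniform lower bound $\inf_\tau c^\tau > 0$.
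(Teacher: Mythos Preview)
The two-stage structure you outline matches the paper, and the first stage (uniform $L^\infty$ from Proposition~\ref{lem w^-}, then Schauder for a uniform $\mathcal{C}^{2+\alpha}$ bound) is correct. But your account of where $c^\tau>0$ enters is muddled, and the actual key step is missing.

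Exponential decay of each \emph{individual} pulse at $+\infty$ is automatic from the hyperbolicity of $\mathbf{0}$: the Jacobian $(\mathbf{F}^\tau)'(\mathbf{0})$ has all eigenvalues with negative real part, uniformly in $\tau$, so standard ODE theory gives $|\mathbf{w}(x)|\le C_\tau e^{-\lambda x}$ once $|\mathbf{w}|$ is below some threshold $\varepsilon$. This has nothing to do with $c^\tau>0$; your sentence ``the positivity of $c^\tau$ is what guarantees\dots\ that the monotone decaying solution actually realizes this exponential decay'' is not the right picture. What is genuinely at stake---and what you correctly flag as ``the delicate point''---is the \emph{uniformity in $\tau$} of the location $x_1$ beyond which $|\mathbf{w}(x)|<\varepsilon$. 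You gesture at a ``contradiction/compactness argument'' for this but never name the contradiction, and that contradiction is the whole content of the theorem.

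The paper's argument is: suppose no uniform $x_1$ exists. Then there are pulses $\mathbf{w}^k$ (at parameters $\tau_k$) and points $x_k\to+\infty$ with $|\mathbf{w}^k(x_k)|=\varepsilon$. Translate by $x_k$, use the uniform $\mathcal{C}^{2+\alpha}$ bound to extract a $\mathcal{C}^2_{\mathrm{loc}}$ limit $\mathbf{z}^0$ defined on all of $\R$. This limit is monotone, solves $D\mathbf{z}^{0\prime\prime}+\mathbf{F}^{\tau_0}(\mathbf{z}^0)=0$, satisfies $\mathbf{z}^0(+\infty)=\mathbf{0}$, and has $|\mathbf{z}^0(0)|=\varepsilon$, so $\mathbf{z}^0(-\infty)$ is a nontrivial zero of $\mathbf{F}^{\tau_0}$---either $\bar{\mathbf{w}}$ or $\mathbf{w}^-$. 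In the first case one gets a monostable front with $c=0$, impossible since such fronts have negative speed; in the second case $\mathbf{z}^0$ is a bistable wave with speed zero, contradicting $c^{\tau_0}>0$. \emph{This} is where the hypothesis is used. Your supersolution $\mathbf{w}^- e^{-\lambda x}$ and the unspecified ``comparison between the pulse and the wave'' do not produce this uniformity; without the translated-limit argument the proof has a genuine gap.
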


\begin{proof}
From the uniform estimate of solution given by
Proposition \ref{lem w^-} it easily follows that solutions of problem \eqref{Eq stat tau}, \eqref{Eq stat tau boundaries} with condition \eqref{Eq stat tau mono}  are uniformly bounded in the H\"older space without weight. Hence to prove the theorem it is  sufficient to prove that $\sup_x|\mathbf{w}(x)\mu(x)|$ is uniformly bounded.

The solutions decay exponentially at infinity. Therefore, the weighted norm $\sup_x|\mathbf{w}(x)\mu(x)|$ is bounded for each solution. Suppose that the solutions are not uniformly bounded in the weighted norm. Then there exists a sequence $\mathbf{w}^k$ of solutions of \eqref{Eq stat tau} such that:
\begin{equation}
\sup_{x\geq 0}|\mathbf{w}^k(x)\mu(x)| \rightarrow + \infty \quad \text{ as } k\rightarrow +\infty.
\end{equation}
These solutions can correspond to different values of $\tau$.

Let $\varepsilon>0$ be small enough, so that exponential decay of the solutions gives the existence of a constant $M$, independent of $k$, such that the estimate
\begin{equation}
|\mathbf{w}^k(x)\mu(x)|\leq M.
\end{equation}
follows from the inequality $|\mathbf{w}^k(x)|\leq \varepsilon$.
We choose $\varepsilon$ small enough, such $|\mathbf{w}^k(0)| > \varepsilon$ (see inequality \eqref{Eq eta} below). Then there exists $x_k>0$ such that $|\mathbf{w}^k(x_k)|=\varepsilon$. Then we have:
\begin{equation}
|\mathbf{w}^k(x)\mu(x-x_k)|\leq M \quad \text{ for } x\geq x_k.
\label{Majoration w^kmu}
\end{equation}

If the sequence of $x_k$ is bounded, then $|\mathbf{w}^k(x)\mu(x)|$ is uniformly bounded. Indeed, for $0\leq x \leq x_k\leq \sup_k(x_k) < +\infty$ we have $\mathbf{w}^k<w^-$ from Lemma \ref{lem w^-}, and the estimate \eqref{Majoration w^kmu} holds for $x\geq \sup_k(x_k)$.

Suppose now that $\sup_k(x_k)=+\infty$. Then we consider a subsequence of $x_k$, still written $x_k$, such that $x_k\rightarrow +\infty$. Consider the sequence of functions $\mathbf{z}^k(x)=\mathbf{w}^k(x+x_k)$. We can extract a subsequence that converges to a function $\mathbf{z}^0$ in $\mathcal{C}^2_{loc}(\R)$. Then the function $\mathbf{z}^0$ is monotonically decreasing, it is defined on $\R$, and it satisfies the equation
\begin{equation}
D\mathbf{z}^{0''}+\mathbf{F}^{\tau_0}(\mathbf{z}^0)=0,
\end{equation}
for some value $\tau_0\in[0,1]$. The estimate \eqref{Majoration w^kmu} still holds, hence, $\mathbf{z}^0(+\infty)=\mathbf{w}^+$.
Moreover we have $\mathbf{z}^0(-\infty)=\mathbf{z}^-$, where $\mathbf{F}^{\tau_0}(\mathbf{z}^-)=0$ and $\mathbf{z}^-\ne \mathbf{w}^+$ since $|\mathbf{z}^0(0)|=\varepsilon$.
 Then $\mathbf{z}^0$ is solution of \eqref{Eq wave tau equation} for $c^{\tau_0}=0$. Since $\mathbf{F}^{\tau_0}$ possesses exactly three stationary points, $\mathbf{z}^-=\mathbf{w}^- \text{ or } \bar{\mathbf{w}}$.
 Both cases lead to a contradiction.

Indeed, if $\mathbf{z}^-$ is unstable ($\mathbf{z}^-=\bar{\mathbf{w}}$), then a solution of \eqref{Eq wave tau equation} exists if and only if $c^{\tau}<0$ \cite{TravelingWave}. If $\mathbf{z}^-$ is stable ($\mathbf{z}^-=\mathbf{w}^-$), then $\mathbf{z}^0$ is a solution of \eqref{Eq wave tau equation} and \eqref{Eq wave tau condition} with $c^{\tau_0}=0$, which contradicts the assumption of Theorem \ref{thm bound weighed}, that is, $c^{\tau}>0$ for all $\tau$. Hence the function $\mathbf{z}^0$ can not exist, and $\sup_k(x_k)<+\infty$, completing the proof
of the uniform estimate in the weighted space. Estimate \eqref{est} can now be easily proved by conventional methods.

\end{proof}


\subsection{Separation of monotone solutions}
Problem \eqref{Eq stat tau}, \eqref{Eq stat tau boundaries} can have monotone solutions, satisfying condition \eqref{Eq stat tau mono} and called pulses, and non monotone solutions. Since we are looking for monotone solutions, we need to assure that the Leray-Schauder method can be applied to this kind of solutions. The idea is to construct an open subset of $E^1_{\mu}$ containing all pulses but such that the non monotone solutions are not in its closure. We will prove that the monotone and non monotone solutions are separated in the function space.

\begin{thm}\label{thm separation}
 Suppose that all monotone solutions of \eqref{Eq stat tau}, \eqref{Eq stat tau boundaries} are uniformly bounded in $ E^1_{\mu}$. Then there exists $r>0$ such that for all monotone solution $\mathbf{w}^M$ and all non monotone solution $\mathbf{w}^N$ the estimate
$$\|\mathbf{w}^M-\mathbf{w}^N\|_{E^1_{\mu}}\geq r  $$
holds.
\end{thm}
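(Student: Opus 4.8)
The plan is to argue by contradiction: suppose no such $r>0$ exists. Then there are sequences of monotone solutions $\mathbf{w}^{M,k}$ and non monotone solutions $\mathbf{w}^{N,k}$ of \eqref{Eq stat tau}, \eqref{Eq stat tau boundaries} (possibly for varying values $\tau_k\in[0,1]$) with $\|\mathbf{w}^{M,k}-\mathbf{w}^{N,k}\|_{E^1_\mu}\to 0$. Since the monotone solutions are uniformly bounded in $E^1_\mu$ by hypothesis, the non monotone ones are uniformly bounded as well. First I would use the uniform bound in the weighted space, together with elliptic (Schauder) estimates applied to \eqref{Eq stat tau}, to upgrade to compactness: after passing to a subsequence, $\tau_k\to\tau_0$, and both $\mathbf{w}^{M,k}$ and $\mathbf{w}^{N,k}$ converge in $\mathcal{C}^2_{loc}(\R_+)$ — in fact in $E^1_\mu$ after using the uniform exponential decay afforded by the weight $\mu$ — to a common limit $\mathbf{w}^0$, which solves $D\mathbf{w}^{0\prime\prime}+\mathbf{F}^{\tau_0}(\mathbf{w}^0)=\mathbf{0}$ with $\mathbf{w}^{0\prime}(0)=\mathbf{w}^0(+\infty)=\mathbf{0}$ and $\mathbf{w}^{0\prime}\le \mathbf{0}$ for $x>0$ (the limit of $\mathbf{w}^{M,k}$ is monotone nonincreasing). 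The goal is then to show this shared limit is impossible, i.e. that a monotone solution cannot be approximated by non monotone ones.

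**Key steps.** The heart of the matter is a strict-monotonicity / rigidity argument for the limit $\mathbf{w}^0$. I would show that $\mathbf{w}^0$ is in fact \emph{strictly} decreasing for $x>0$, by differentiating the equation: $\mathbf{p}:=\mathbf{w}^{0\prime}$ satisfies the linearized system $D\mathbf{p}''+(\mathbf{F}^{\tau_0})'(\mathbf{w}^0)\mathbf{p}=\mathbf{0}$ with $\mathbf{p}(0)=\mathbf{0}$, $\mathbf{p}\le\mathbf{0}$, and since $(\mathbf{F}^{\tau_0})'$ has nonnegative off-diagonal entries on $\mathcal{C}$ (Lemma on monotony of $\mathbf{F}^\tau$), the maximum principle for cooperative systems forces each component of $\mathbf{p}$ to be either identically zero or strictly negative on $(0,\infty)$; the boundary and limit conditions, plus irreducibility of the coupling, rule out $\mathbf{p}_i\equiv 0$, so $\mathbf{w}^{0\prime}<\mathbf{0}$ strictly on $(0,\infty)$ and moreover $\mathbf{w}^{0\prime\prime}(0)<\mathbf{0}$ componentwise (again by the equation and $F_i^{\tau_0}(\mathbf{w}^0(0))$ having the right sign, as in Proposition \ref{lem w^-}). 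Now I would use this strict information to derive the contradiction: a non monotone solution $\mathbf{w}^N$ has some component $\mathbf{w}^N_i$ with a point $x_N>0$ where $(\mathbf{w}^N_i)'(x_N)\ge 0$; since $\mathbf{w}^N$ is close to $\mathbf{w}^0$ in $E^1_\mu$, in particular in $\mathcal{C}^1$ near $x=0$, and $(\mathbf{w}^{0}_i)'$ is bounded away from $0$ on compact subsets of $(0,\infty)$ while $(\mathbf{w}^0_i)''(0)<0$, the sign change of $(\mathbf{w}^N_i)'$ must occur near $x=0$ or near $x=+\infty$; both are excluded — near $0$ because $(\mathbf{w}^{N}_i)'(0)=0$ and $(\mathbf{w}^{N}_i)''(0)<0$ for large $k$ (by continuity of the right-hand side evaluated at $\mathbf{w}^N(0)\approx\mathbf{w}^0(0)$ together with Proposition \ref{lem w^-}-type sign analysis), and near infinity because the weighted convergence controls the tail uniformly, where $\mathbf{w}^0$ decays strictly monotonically. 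This contradicts the existence of $x_N$, completing the proof.

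**Main obstacle.** The delicate point is controlling the \emph{sign of the first derivative of the non monotone solutions near $x=0$ and near $x=+\infty$} uniformly in $k$: convergence in $E^1_\mu$ gives $\mathcal{C}^1$ (indeed $\mathcal{C}^{2+\alpha}$) convergence up to $x=0$ and a uniform weighted tail bound, but one still has to translate "$\mathbf{w}^0$ strictly decreasing with $\mathbf{w}^{0\prime\prime}(0)<\mathbf{0}$" into a quantitative lower bound on $-(\mathbf{w}^N_i)'$ on an interval $[\delta, X]$ and a sign statement on $[0,\delta]$ and $[X,\infty)$ that survives the perturbation. This requires the strict maximum principle / Hopf-type argument for the cooperative linearized system to be made effective, and a careful use of the exponential decay at infinity (which the weight $\mu$ encodes) so that no sign change can "escape to infinity." Once the strict monotonicity of the limit and these uniform $\mathcal{C}^1$/tail controls are in place, the contradiction is immediate. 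I would therefore expect roughly half the work to go into the rigidity of $\mathbf{w}^0$ via the cooperative maximum principle, and the other half into the uniform boundary and tail estimates on the $\mathbf{w}^N$.
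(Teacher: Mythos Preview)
Your overall architecture --- contradiction, extract a common limit $\mathbf{w}^0$, show it is a genuine strictly decreasing pulse, then locate where the critical points $x_k$ of the non monotone sequence can accumulate (interior, $0$, or $+\infty$) and rule each out --- is exactly the paper's. The interior case and the case $x_0=0$ are handled essentially as you describe (for the latter the paper proves $\mathbf{F}^{\tau_0}(\mathbf{w}^0(0))>\mathbf{0}$ strictly via a Hopf argument, which is your $(\mathbf{w}^0)''(0)<\mathbf{0}$).

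There is, however, a genuine gap in your treatment of the tail case $x_0=+\infty$. You propose to exclude it because ``the weighted convergence controls the tail uniformly, where $\mathbf{w}^0$ decays strictly monotonically.'' This does not work: convergence in $E^1_\mu$ gives $|(\mathbf{w}^{N,k})'(x)-(\mathbf{w}^0)'(x)|\le \varepsilon_k/\mu(x)$ with $\mu(x)=\sqrt{1+x^2}$ only polynomial, whereas $(\mathbf{w}^0)'(x)$ decays \emph{exponentially}; hence for large $x$ the error term dominates and the sign of $(\mathbf{w}^{N,k})'(x)$ is not controlled by closeness to $\mathbf{w}^0$. The paper closes this gap with an extra structural ingredient you do not mention: a vector $\mathbf{q}>\mathbf{0}$ with $(\mathbf{F}^\tau)'(\mathbf{0})\,\mathbf{q}<\mathbf{0}$ for all $\tau$ (Lemma~\ref{lem q<0}). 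By continuity this persists for $(\mathbf{F}^\tau)'(\mathbf{w})$ whenever $|\mathbf{w}|$ is small, i.e.\ for $x\ge y$ with $y$ fixed. One then differentiates the equation for $\mathbf{w}^{N,k}$, sets $\mathbf{z}^k=-(\mathbf{w}^{N,k})'$, and compares with the shifted function $\boldsymbol{\zeta}^k=\mathbf{z}^k+a\mathbf{q}$ to get a maximum-principle contradiction if any component of $\mathbf{z}^k$ becomes negative on $[y,\infty)$. Note also that $(\mathbf{F}^\tau)'(\mathbf{0})$ is \emph{not} irreducible (several off-diagonal entries vanish at $\mathbf{0}$), so your appeal to irreducibility of the cooperative system cannot substitute for this $\mathbf{q}$-argument in the tail region.

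The same lemma is what the paper uses to rule out $\mathbf{w}^0\equiv\mathbf{0}$ (Lemma~\ref{lem w>0}), a degeneration you have not addressed: your strict-monotonicity argument for $\mathbf{p}=(\mathbf{w}^0)'$ collapses if the limit is trivially zero, and nothing in the hypotheses prevents a sequence of pulses from shrinking to zero a priori. So the missing piece in your plan is precisely Lemma~\ref{lem q<0}; once you have it, both the nondegeneracy of the limit and the tail case go through as in the paper.
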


\begin{remark}
The existence of the uniform bound in $E^1_{\mu}$ is guaranteed by Theorem \ref{thm bound weighed}.
\end{remark}

To prove Theorem \ref{thm separation}, we  need beforehand two lemmas. First, let us prove that a non negative solution of \eqref{Eq stat tau}, \eqref{Eq stat tau boundaries} is either positive or identically zero.


\begin{lem}
Let $\widehat{\mathbf{w}}(x)$ be a non negative solution of problem \eqref{Eq stat tau}, \eqref{Eq stat tau boundaries} for some $\tau\in[0,1]$. Then one of the two following conclusions holds:
\begin{itemize}
\item $ \widehat{\mathbf{w}}(x) \equiv 0$ ,
\item $\widehat{\mathbf{w}}(x) >0$, $x \geq 0$ .
\end{itemize}
\label{lem positivity}
\end{lem}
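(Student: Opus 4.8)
The plan is to prove the dichotomy componentwise, by applying the strong maximum principle to each scalar equation of the system, exploiting the cooperativity \eqref{F tau monotone} and the hypothesis $\widehat{\mathbf{w}}\ge 0$; one then propagates the alternative (``$\equiv 0$'' versus ``$>0$'') through the coupling of the eight equations, the thrombin component $\widehat w_8$ being the pivot.

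\textbf{Step 1 — componentwise dichotomy.} I would fix $i$ and treat $D_i\widehat w_i''+F_i^\tau(\widehat{\mathbf{w}})=0$ as a scalar linear equation for $\widehat w_i$. Set $f_i(x):=F_i^\tau(\widehat w_1,\dots,\widehat w_{i-1},0,\widehat w_{i+1},\dots,\widehat w_8)(x)$, the value obtained by putting the $i$-th argument to zero. Then $f_i\ge 0$: for $i\in\{1,\dots,7\}$ it is a sum of products of the non-negative quantities $\widehat w_j(x)$ and $\rho_j$, and for $i=8$ one uses in addition $\alpha^\tau\ge 0$ together with $P(0)=g(0)=0$ (recall \eqref{Def Hom 3} and $\Supp g\subset(\bar T,T^-)$). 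Writing $F_i^\tau(\widehat{\mathbf{w}}(x))=f_i(x)-\ell_i(x)\widehat w_i(x)$ with $\ell_i(x)=-\int_0^1\partial_{v_i}F_i^\tau\bigl(\dots,t\,\widehat w_i(x),\dots\bigr)\,dt$, which is bounded because the classical solution $\widehat{\mathbf{w}}$ is continuous on $\R_+$ with limit $\mathbf 0$ at $+\infty$, hence bounded, and choosing $K>0$ with $\ell_i+K\ge 0$, one obtains
\[
  D_i\widehat w_i''-(\ell_i(x)+K)\widehat w_i=-f_i(x)-K\widehat w_i\le 0,\qquad \widehat w_i\ge 0 .
\]
The strong maximum principle for this operator, together with the Hopf boundary-point lemma at $x=0$ where $\widehat w_i'(0)=0$ (equivalently, even reflection across $x=0$), then forces $\widehat w_i\equiv 0$ on $\R_+$ or $\widehat w_i>0$ on $\R_+$.

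\textbf{Step 2 — propagating the alternative.} If $\widehat w_8\equiv 0$, then since $F_3,F_4,F_7$ depend on $v_8$ only through the factors $k_jv_8(\rho_j-v_j)$, the equations for $\widehat w_3,\widehat w_4,\widehat w_7$ reduce to $D_j\widehat w_j''-h_j\widehat w_j=0$, which with $\widehat w_j'(0)=\widehat w_j(+\infty)=0$ gives $\widehat w_3\equiv\widehat w_4\equiv\widehat w_7\equiv 0$; repeating in the order $5,2,6,1$ — at each stage the production part of $F_j$ involves only components already known to vanish — yields $\widehat{\mathbf{w}}\equiv 0$. If instead $\widehat w_8\not\equiv 0$, then $\widehat w_8>0$ on $\R_+$ by Step 1; now $\widehat w_3\equiv 0$ is impossible since it would force $k_3\widehat w_8\rho_3\equiv 0$, so $\widehat w_3>0$, and likewise $\widehat w_4,\widehat w_7>0$; continuing in the order $5,2,6,1$ (no $\widehat w_j$ can vanish identically without killing a production factor already shown to be strictly positive, e.g.\ $\widehat w_6\equiv 0\Rightarrow k_6\widehat w_5+\bar k_6\widehat w_2\equiv 0$ and $\widehat w_1\equiv 0\Rightarrow k_1\widehat w_3\widehat w_6\equiv 0$) gives $\widehat{\mathbf{w}}>0$ on $\R_+$. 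In both cases the conclusion of the lemma holds.

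\textbf{Expected main obstacle.} The delicate point is Step 1 for the component $i=8$: there $\partial_{v_8}F_8^\tau$ contains $\beta^\tau P'(T)+\gamma^\tau g'(T)$, which is not $\le 0$ (indeed $P'(\bar T)>0$ by \eqref{Condition P}), so the zero-order coefficient of the linearized equation is not sign-definite; this is exactly what absorbing the large constant $K$ is designed to handle, and it is legitimate precisely because $\widehat{\mathbf{w}}$ is bounded. The other point to keep in view is that the hypothesis $\widehat{\mathbf{w}}\ge 0$ is used exactly to guarantee $f_i\ge 0$, which is the input the maximum principle requires.
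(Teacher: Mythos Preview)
Your proof is correct and takes a genuinely different route from the paper. The paper argues pointwise: it fixes a single point $\sigma$ where some component $\widehat w_i(\sigma)=0$, observes that $\widehat w_i'(\sigma)=0$ and $\widehat w_i''(\sigma)\ge 0$ force the production part of $F_i^\tau$ to vanish at $\sigma$, and then chases through the coupling structure to conclude that further components vanish at that same point; once $\widehat{\mathbf w}(\sigma)=\mathbf 0$ and $\widehat{\mathbf w}'(\sigma)=\mathbf 0$, uniqueness for the Cauchy problem for the full system gives $\widehat{\mathbf w}\equiv \mathbf 0$. Your argument instead establishes a global dichotomy for each scalar component first (via the strong maximum principle with a shifted zero-order coefficient) and only afterwards uses the coupling, pivoting on $\widehat w_8$, to rule out mixed configurations.

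What each approach buys: the paper's argument is entirely elementary (second-derivative test and ODE uniqueness, no maximum principle in the strong form), but the step ``finally we get $\widehat{\mathbf w}(\sigma)=\mathbf 0$'' requires checking that the chain of implications actually reaches all eight components from any starting index, which is combinatorially delicate and becomes awkward at $\tau=1$ where the eighth equation decouples. Your approach is more modular: once Step~1 is done, Step~2 is a clean one-directional sweep $8\to 3,4,7\to 5\to 2\to 6\to 1$ with no case distinctions, and the endpoint $\tau=1$ needs no special treatment. The price you pay is invoking the strong maximum principle and Hopf lemma, and the observation that the non-sign-definite zero-order term (for $i=8$) can be absorbed because $\widehat{\mathbf w}$ is bounded --- which you handle correctly.
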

\begin{proof}
Let $\widehat{\mathbf{w}}$ be a non negative solution. Suppose that there exists $\sigma \geq 0$ and a component of the solution such that $\widehat{w}_i(\sigma)=0$. Since the solution $\widehat{\mathbf{w}}$ is non negative we have $\widehat{w}_i'(\sigma)=0$ and $\widehat{w}_i''(\sigma)\geq 0$. Then from the corresponding equation of system \eqref{Eq stat tau} we conclude that $F_i^{\tau}(\widehat{\mathbf{w}}(\sigma))\leq 0$.  Using the expression of $\mathbf{F}^{\tau}$ given by \eqref{Def F}, \eqref{Equation P tau} and \eqref{Def Hom 3} we see that $F_i^{\tau}(\mathbf{w})=G_i^{\tau}(\mathbf{w})-\lambda_i(\mathbf{w}) w_i$, for some positive function $\lambda_i$ which does not depends on $w_i$ and where $G_i^{\tau}(\mathbf{w})\geq 0$ for $\mathbf{w}\geq \mathbf{0}$. Since $\widehat{w}_i(\sigma)=0$, it follows $F_i^{\tau}(\widehat{\mathbf{w}}(\sigma))=0$.

Suppose, for example, that $i=1$. Then from the equality $F_i^{\tau}(\widehat{\mathbf{w}}(\sigma))=0$ it follows that
$\widehat{w}_3(\sigma)=0$ or $\widehat{w}_6(\sigma)=0$. We repeat the same arguments, respectively, for $i=3$ or $i=6$
and conclude that some other components vanish at $x=\sigma$. Finally, we get $\widehat{\mathbf{w}}(\sigma)=0$. From the uniqueness of solution
of the Cauchy problem we conclude that $\widehat{\mathbf{w}}(x)\equiv \mathbf{0}$. The proof remains similar for all other values of $i$
for which the corresponding component of the solution vanish.

%
%

\end{proof}

The next result concerns the Jacobian matrix  $(\mathbf{F}^{\tau})'$.

\begin{lem}
There exists a constant vector $\mathbf{q}>\mathbf{0}$ such that $(\mathbf{F}^{\tau})'(\mathbf{0}).\mathbf{q}<\mathbf{0}$
$\forall \tau\in[0,1]$.
\label{lem q<0}
\end{lem}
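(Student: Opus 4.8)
**Proof proposal for Lemma (existence of $\mathbf{q}>\mathbf{0}$ with $(\mathbf{F}^\tau)'(\mathbf{0})\mathbf{q}<\mathbf{0}$).**

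The plan is to exhibit $\mathbf{q}$ explicitly using the same hierarchical structure of the Jacobian that was exploited in the proof of Proposition \ref{lem Stability}. At the point $\mathbf{w}^+=\mathbf{0}$ the matrix $(\mathbf{F}^\tau)'(\mathbf{0})$ has, thanks to \eqref{Def F} and \eqref{Def Hom 3}, a very sparse structure: differentiating $F_i$ ($i=1,\dots,7$) at $\mathbf{0}$, the diagonal entry is $-h_i$ (respectively $-h_i$ with no extra term, since all the terms $k_iv_j(\rho_i - v_i)$ vanish to first order except through the off-diagonal variable $v_j$), and there is exactly one positive off-diagonal entry coming from the product term. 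Concretely $\partial F_1/\partial v_3(\mathbf{0}) = k_1\rho_6 \cdot 0$ — one must be careful: the term is $k_1 v_3 v_6$, so both partials $\partial/\partial v_3$ and $\partial/\partial v_6$ vanish at $\mathbf{0}$. Hence for $i=1,2$ the rows of $(\mathbf{F}^\tau)'(\mathbf{0})$ are simply $-h_i \mathbf{e}_i$. For $i=3,\dots,7$ the term $k_i v_8(\rho_i - v_i)$ contributes $k_i\rho_i$ to $\partial F_i/\partial v_8(\mathbf{0})$, so row $i$ is $-(h_i)\,$ on the diagonal (for $i=5$) or $-(h_i)$ (general) plus $k_i\rho_i$ in column $8$; similarly row $6$ picks up contributions from $v_5$ and $v_2$: $\partial F_6/\partial v_5(\mathbf{0}) = k_6\rho_6$, $\partial F_6/\partial v_2(\mathbf{0}) = \overline{k_6}\rho_6$, diagonal $-h_6$. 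Finally row $8$ of $(\mathbf{F}^\tau)'(\mathbf{0})$: from \eqref{Def Hom 3}, $F_8^\tau = \alpha^\tau F_8 + \beta^\tau P + \gamma^\tau g$, and since $g$ has support in $(\bar T, T^-)$ it vanishes identically near $T=0$ together with all its derivatives; so the $\mathbf{w}$-gradient of $\gamma^\tau g$ at $\mathbf{0}$ is zero, $\partial P/\partial\, \cdot$ contributes $P'(0)$ in the $v_8$-slot (composing with $\phi'$), and $\alpha^\tau F_8$ contributes the genuine partials $\partial F_8/\partial v_6(\mathbf{0}) = k_8 T^0$, $\partial F_8/\partial v_1(\mathbf{0}) = \overline{k_8} T^0$, $\partial F_8/\partial v_8(\mathbf{0}) = -h_8$.

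With this structure in hand, I would look for $\mathbf{q}$ of the form $q_i = \phi_i'(0) + \varepsilon \delta_i$ for $i=1,\dots,7$ and $q_8 = 1$, with $\varepsilon>0$ small and $\delta_i>0$ chosen so that the strict inequalities propagate. The key observation is Lemma \ref{lem F_i(phi,T)=0}: the identity $\frac{dF_i}{dT}(\phi(T),T)=0$ evaluated at $T=0$ says precisely that $\sum_{j=1}^7 \frac{\partial F_i}{\partial v_j}(\mathbf{0})\phi_j'(0) + \frac{\partial F_i}{\partial v_8}(\mathbf{0}) = 0$, i.e. the vector $(\phi_1'(0),\dots,\phi_7'(0),1)$ is annihilated by rows $1,\dots,7$ of $(\mathbf{F}^\tau)'(\mathbf{0})$ (these rows are $\tau$-independent). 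For row $8$, the same substitution gives $\big((\mathbf{F}^\tau)'(\mathbf{0})(\phi'(0),1)\big)_8 = \alpha^\tau P'(0) + \beta^\tau P'(0) = P'(0)$ using $\alpha^\tau+\beta^\tau=1$, and by \eqref{Condition P} we have $P'(0)=P'(T^+)<0$. So the "zeroth order" choice $\mathbf{q}_0 = (\phi_1'(0),\dots,\phi_7'(0),1)$ already gives $(\mathbf{F}^\tau)'(\mathbf{0})\mathbf{q}_0 = (0,\dots,0,P'(0))$, which is $\le \mathbf{0}$ with strict inequality only in the last slot, uniformly in $\tau$.

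To upgrade the first seven zero entries to strictly negative ones, I would perturb: set $\mathbf{q} = \mathbf{q}_0 + \varepsilon\,(\eta_1,\dots,\eta_7,0)$. Since all off-diagonal entries of $(\mathbf{F}^\tau)'(\mathbf{0})$ in the first seven rows are nonnegative and the diagonal entries are the strictly negative numbers $-h_i, -h_i$ (more precisely $-h_5$ etc.), one checks that by choosing the $\eta_i$ positive in the reverse hierarchical order (the same order $i=4,7,5,2,6,1$ used before, exploiting that row $i$ only couples $q_i$ to already-fixed components and to $q_8$), each row-$i$ entry of $(\mathbf{F}^\tau)'(\mathbf{0})\mathbf{q}$ becomes $-h_i \varepsilon \eta_i$ plus nonnegative multiples of the previously-chosen $\varepsilon\eta_j$ — wait, the sign goes the wrong way; instead one wants the diagonal term to dominate, so one picks $\eta_i$ large relative to $\eta_j$ for the $j$'s feeding into row $i$, making the total $<0$. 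For $\varepsilon$ small this perturbation does not spoil the last entry ($P'(0)<0$ with room to spare) and keeps $\mathbf{q}>\mathbf{0}$ since $\phi_i'(0)>0$. The only genuine subtlety — and the step I expect to be the main obstacle — is verifying that this triangular/hierarchical cascade for choosing the $\eta_i$ really closes up consistently given the precise coupling pattern of $(\mathbf{F}^\tau)'(\mathbf{0})$, and that all constants can be chosen \emph{uniformly in $\tau\in[0,1]$}; but since rows $1$–$7$ are independent of $\tau$ and row $8$ depends on $\tau$ only through the convex combination $\alpha^\tau+\beta^\tau=1$ multiplying $P'(0)$, uniformity is automatic once the pattern is checked. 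This $\mathbf{q}$ then serves (as is standard in the Leray--Schauder setup for monotone systems) as a supersolution direction at the origin, which is what the subsequent separation argument in Theorem \ref{thm separation} requires.
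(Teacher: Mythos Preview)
Your approach is correct and is genuinely different from the paper's. The paper proceeds by brute force: it writes out $(\mathbf{F}^\tau)'(\mathbf{0})\mathbf{q}$ componentwise, extracts the resulting chain of inequalities on the $q_i$, observes that these form a cycle $q_5\to q_7\to q_8\to q_6\to q_5$, and then checks that the product of the ratios around this cycle is less than $1$; that scalar condition is rewritten as \eqref{Condition q>0} and verified using $d<0$ from \eqref{Condition Q} (equivalently $P'(0)<0$). Your route instead exploits Lemma~\ref{lem F_i(phi,T)=0} to produce the near-null vector $\mathbf{q}_0=(\phi_1'(0),\dots,\phi_7'(0),1)$, for which rows $1$--$7$ vanish exactly and row $8$ equals $(\alpha^\tau+\beta^\tau)P'(0)=P'(0)<0$ uniformly in $\tau$; then you perturb only the first seven entries. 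Because $q_8$ is frozen, the cycle in the paper's inequalities is broken and the perturbation constraints form a DAG ($\eta_7\to\eta_5\to\eta_6$, with $\eta_2\to\eta_6$), so no compatibility condition needs to be checked. This is cleaner and makes the role of $P'(0)<0$ more transparent.

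One small slip to fix when you write it up: the claim ``$\phi_i'(0)>0$'' is false for $i=1,2$, since $\phi_1=\tfrac{k_1}{h_1}\phi_3\phi_6$ and $\phi_2=\tfrac{k_2}{h_2}\phi_4\phi_5$ vanish to second order at $T=0$; hence $\phi_1'(0)=\phi_2'(0)=0$. This does not hurt your argument (positivity of $q_1,q_2$ comes from the perturbation term $\varepsilon\eta_i>0$, and rows $1,2$ of the Jacobian at $\mathbf{0}$ are purely diagonal so the perturbed entries are simply $-h_1\varepsilon\eta_1$ and $-h_2\varepsilon\eta_2$), but the justification should be corrected. Also, you omitted $i=3$ from your hierarchical list; it behaves like $i=4,7$ (coupled only to $q_8$, hence unconstrained after perturbation).
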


\begin{proof}
The expression of the Jacobian matrix $(\mathbf{F}^{\tau})$ is given in the Appendix \ref{App Jac Mat}. We notice that for $(\mathbf{F}^{\tau})'(\mathbf{0})$ we have $\theta_{1,3}=\theta_{1,6}=\theta_{2,4}=\theta_{2,5}=0$. The other elements of this matrix, $\theta_{i,j}$ and $d_i$ are strictly positive, while $H^{\tau}(0)=\beta^{\tau}P'(0)-\alpha^{\tau}h_8 <0$.
Direct calculations give:
\begin{equation}
(\mathbf{F}^{\tau})'(0).\mathbf{q}=
\begin{pmatrix}
-d_1q_1 \\
-d_2q_2 \\
-d_3q_3+\theta_{3,8}q_8 \\
-d_4q_4+\theta_{4,8}q_8 \\
-d_5q_5+\theta_{5,7}q_7 \\
-d_6q_6+\theta_{6,2}q_2+\theta_{6,5}q_5 \\
-d_7q_7+\theta_{7,8}q_8 \\
H^{\tau}(0)q_8+\theta_{8,1}q_1+\theta_{8,6}q_6
\end{pmatrix}.
\end{equation}
Inequality $(\mathbf{F}^{\tau})'(\mathbf{0}).\mathbf{q}<\mathbf{0}$ is satisfied if the following conditions hold:

 $$ q_1>0, \; q_2>0, \; q_3>\frac{\theta_{3,8}}{d_3}\;q_8, \; q_4>\frac{\theta_{4,8}}{d_4}\;q_8 \; , $$

 $$ q_5>\frac{\theta_{5,7}}{d_5}\;q_7 , \;
 q_6>\frac{\theta_{6,2}q_2+\theta_{6,5}q_5}{d_6}, \; q_7>\frac{\theta_{7,8}}{d_7} \; q_8, \; q_8>\frac{\theta_{8,1}q_1+\theta_{8,6}q_6}{H^{\tau}(0)} \; . $$
The first four inequalities are independent and the last four inequalities lead to the condition:
\begin{equation}
q_5>\frac{\theta_{5,7}\theta_{7,8}}{d_5d_7}\left(\frac{\theta_{8,1}}{H^{\tau}(0)}q_1+
\frac{\theta_{8,6}}{H^{\tau}(0)}\left(\frac{\theta_{6,2}}{d_6} \; q_2+\frac{\theta_{6,5}}{d_6} \; q_5\right)\right).
\end{equation}
It can be satisfied if  the following estimate holds:
\begin{equation}
\frac{\theta_{5,7}\theta_{7,8}}{d_5d_7}\frac{\theta_{8,6}}{H^{\tau}(0)}\frac{\theta_{6,5}}{d_6}<1.
\label{Condition q>0}
\end{equation}
It is equivalent to the condition:
\begin{equation}
\label{z1}
\beta^{\tau}P'(0)h_5h_6h_7<-\alpha^{\tau}(k_5\rho_5k_6\rho_6k_7\rho_7k_8\rho_8-h_5h_6h_7h_8).
\end{equation}
The right-hand side of this inequality equals $-\alpha^{\tau}\frac{d}{h_3h_4}$ (see \eqref{Coef d}).
Since $d<0$ due to conditions \eqref{Condition Q}, then this expression is positive.
To verify \eqref{z1}, it remains to note that $P'(0)<0$.
\end{proof}

 We now return to the proof of Theorem \ref{thm separation}. Let us consider a sequence of monotone solutions $(\mathbf{w}^{M,k})_{k\geq0}$ of problem \eqref{Eq stat tau}, \eqref{Eq stat tau boundaries} and  a sequence of non monotone solutions $(\mathbf{w}^{N,k})_{k\geq0}$.
 First, we will prove that the set of monotone solutions is closed. After that we will show that a squence of non monotone solutions  cannot converge to a monotone solution.

Let us start with the first step. Since $(\mathbf{w}^{M,k})_{k\geq0}$ is bounded in $E_{\mu}^1$, then it is compact in $E^1$.
Therefore, we can extract a subsequence of $(\mathbf{w}^{M,k})_{k\geq0}$, still written $(\mathbf{w}^{M,k})_{k\geq0}$, that converges in $E^1$. We call $\widehat{\mathbf{w}}$ its limit.
Thus we have

$$D \widehat{\mathbf{w}}''+\mathbf{F}^{\hat{\tau}}(\widehat{\mathbf{w}})=0$$
for some $\hat{\tau} \in [0,1]$. Moreover, $\widehat{\mathbf{w}}\geq 0$, $\widehat{\mathbf{w}}'\leq 0$.
 We will show that the solution $\widehat{\mathbf{w}}$ is a pulse, that is, it satisfies problem \eqref{Eq stat tau}, \eqref{Eq stat tau boundaries} with condition \eqref{Eq stat tau mono}. First, let us prove that $\widehat{\mathbf{w}}$ remains positive.

\begin{lem} The inequality $\widehat{\mathbf{w}}(x)>0$ holds for all $x \geq 0$.
\label{lem w>0}
\end{lem}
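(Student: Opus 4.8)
The plan is to argue by contradiction, exactly in the spirit of Lemma \ref{lem positivity}. Suppose $\widehat{\mathbf{w}}$ is not strictly positive. Since $\widehat{\mathbf{w}}$ is the limit in $E^1$ of the monotone solutions $\mathbf{w}^{M,k}$, it is non negative and satisfies $D\widehat{\mathbf{w}}'' + \mathbf{F}^{\hat\tau}(\widehat{\mathbf{w}})=0$ together with $\widehat{\mathbf{w}}'(0)=\mathbf{0}$, $\widehat{\mathbf{w}}'\leq \mathbf{0}$. By Lemma \ref{lem positivity}, the only alternative to strict positivity is $\widehat{\mathbf{w}}\equiv\mathbf{0}$; so it suffices to rule out $\widehat{\mathbf{w}}\equiv\mathbf{0}$. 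In other words, the heart of the matter is to show that the limit cannot degenerate to the trivial solution, i.e. that the monotone solutions $\mathbf{w}^{M,k}$ cannot shrink to $\mathbf{0}$ in $E^1$.

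The mechanism I would use to prevent this is the subsolution furnished by Lemma \ref{lem q<0}. There is a constant vector $\mathbf{q}>\mathbf{0}$ with $(\mathbf{F}^{\tau})'(\mathbf{0})\cdot\mathbf{q}<\mathbf{0}$ for every $\tau\in[0,1]$. By continuity of $(\mathbf{F}^{\tau})'$ and compactness of $[0,1]$, there exist $\eta>0$ and a neighbourhood $\{|\mathbf{v}|\leq\eta\}$ of $\mathbf{0}$ on which $(\mathbf{F}^{\tau})'(\mathbf{v})\cdot\mathbf{q}<\mathbf{0}$ uniformly in $\tau$; since $\mathbf{F}^\tau(\mathbf{0})=\mathbf{0}$, this gives $\mathbf{F}^\tau(s\mathbf{q})\leq \mathbf{F}^\tau(s\mathbf{q}) - \mathbf{F}^\tau(\mathbf{0})\cdot$(along the segment)$\,<\mathbf{0}$ for all $s\in(0,s_0]$ with $s_0\mathbf{q}$ inside that ball. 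Now I would compare $\widehat{\mathbf{w}}$ (or, more robustly, each $\mathbf{w}^{M,k}$) with the constant vector $s\mathbf{q}$: the point is that a nonzero monotone pulse must ``clear'' a fixed amount near the origin, i.e. $|\mathbf{w}^{M,k}(0)|\geq\eta$ for some fixed $\eta>0$ independent of $k$ — this is precisely the inequality \eqref{Eq eta} alluded to earlier in the proof of Theorem \ref{thm bound weighed}. Passing to the limit, $|\widehat{\mathbf{w}}(0)|\geq\eta>0$, so $\widehat{\mathbf{w}}\not\equiv\mathbf{0}$, and Lemma \ref{lem positivity} forces $\widehat{\mathbf{w}}(x)>\mathbf{0}$ for all $x\geq 0$.

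To make the clearing estimate precise I would reason as follows. If a nontrivial non negative solution $\mathbf{w}$ of \eqref{Eq stat tau}, \eqref{Eq stat tau boundaries} satisfied $\mathbf{0}<\mathbf{w}(0)\leq s_0\mathbf{q}$, then since $\mathbf{w}$ is decreasing we would have $\mathbf{0}<\mathbf{w}(x)\leq s_0\mathbf{q}$ for all $x\geq 0$, hence $\mathbf{F}^\tau(\mathbf{w}(x))<\mathbf{0}$ on $(0,\infty)$ by the choice of $s_0$; but then $D\mathbf{w}''=-\mathbf{F}^\tau(\mathbf{w})>\mathbf{0}$, so each component $w_i$ is strictly convex on $(0,\infty)$, and combined with $w_i'(0)=0$ this forces $w_i$ to be increasing — contradicting $\mathbf{w}'<\mathbf{0}$ (or, at the level of the limit where only $\widehat{\mathbf{w}}'\leq\mathbf{0}$ is known, contradicting $\widehat{\mathbf{w}}(+\infty)=\mathbf{0}$ together with $w_i(0)>0$). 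Hence every nontrivial monotone solution satisfies $|\mathbf{w}(0)|>s_0\min_i q_i=:\eta$, uniformly in $k$ and $\tau$, which is what we needed.

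The main obstacle is making the comparison with $s\mathbf{q}$ rigorous and uniform in $\tau$: one must secure a single neighbourhood of $\mathbf{0}$ and a single scale $s_0$ on which $\mathbf{F}^\tau(s\mathbf{q})<\mathbf{0}$ for all $\tau\in[0,1]$ — this rests on Lemma \ref{lem q<0} plus a continuity/compactness argument — and then correctly handle the boundary behaviour at $x=0$, where only $\widehat{\mathbf{w}}'(0)=0$ and $\widehat{\mathbf{w}}'\leq \mathbf{0}$ survive in the limit rather than the strict inequality $\widehat{\mathbf{w}}'<\mathbf{0}$. Using the convexity argument above at the level of the prelimit solutions $\mathbf{w}^{M,k}$, where $\mathbf{w}'<\mathbf{0}$ genuinely holds, avoids this last difficulty cleanly.
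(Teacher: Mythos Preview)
Your overall strategy coincides with the paper's: reduce via Lemma~\ref{lem positivity} to showing $\widehat{\mathbf{w}}\not\equiv\mathbf{0}$, then use the vector $\mathbf{q}$ from Lemma~\ref{lem q<0} to show that a monotone pulse cannot have arbitrarily small $\mathbf{w}(0)$, and derive the contradiction from $\mathbf{w}'(0)=\mathbf{0}$ together with $\mathbf{w}'<\mathbf{0}$ on $(0,\infty)$. That is exactly how the paper proceeds.

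There is, however, a genuine gap in your ``clearing'' step. You assert that if $\mathbf{0}<\mathbf{w}(x)\leq s_0\mathbf{q}$ then $\mathbf{F}^\tau(\mathbf{w}(x))<\mathbf{0}$ \emph{componentwise}, and then run a convexity argument on every component. But your choice of $s_0$ only guarantees $\mathbf{F}^\tau(s\mathbf{q})<\mathbf{0}$ along the ray $\{s\mathbf{q}\}$ (equivalently, the directional derivative condition $(\mathbf{F}^\tau)'(\mathbf{v})\cdot\mathbf{q}<\mathbf{0}$ on a ball controls $\mathbf{F}^\tau$ along segments parallel to $\mathbf{q}$, not at arbitrary points of the box $[\mathbf{0},s_0\mathbf{q}]$). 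In fact $\mathbf{F}^\tau(\mathbf{w})<\mathbf{0}$ can fail on the box: e.g.\ $F_2(\mathbf{w})=k_2w_4w_5-h_2w_2>0$ when $w_2=0$ and $w_4,w_5>0$. The paper closes this by invoking the monotonicity \eqref{F tau monotone}: for $\mathbf{w}\in[\mathbf{0},\varepsilon\mathbf{q}]\setminus\{\mathbf{0}\}$, choose $i$ maximizing $w_j/q_j$ and set $s=w_i/q_i\in(0,\varepsilon]$; then $w_j\leq s q_j$ for all $j$, so $F_i^\tau(\mathbf{w})\leq F_i^\tau(s\mathbf{q})<0$. Thus \emph{one} component satisfies $F_i^\tau(\mathbf{w}^{M,k}(0))<0$, hence $w_i^{M,k\prime\prime}(0)>0$, which already contradicts $w_i^{M,k\prime}(0)=0$ and $w_i^{M,k\prime}<0$ on $(0,\infty)$. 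No global convexity on $(0,\infty)$, and no argument at the limit $\widehat{\mathbf{w}}$, is needed. With this correction your proof is essentially the paper's.
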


\begin{proof}
To prove this lemma, it is sufficient to show that $\widehat{\mathbf{w}}(0)>0$ and to apply the Lemma \ref{lem positivity}. We remark that if at least one component of the vector $\widehat{\mathbf{w}}(0)$ equals zero, then the whole vector vanish, $\widehat{\mathbf{w}}(0)=\mathbf{0}$.
 If this is the case, then
  $\mathbf{w}^{M,k}(0)\rightarrow \mathbf{0}$. Lemma \ref{lem q<0} assures the existence of a positive vector $\mathbf{q}$ such that for all $\tau\in[0,1]$, $(\mathbf{F}^{\tau})'(0)\mathbf{q}<\mathbf{0}$. Let $\varepsilon>0$. Then the following expansion holds:
  $$ \mathbf{F}^{\hat{\tau}}(\varepsilon \mathbf{q})=\mathbf{F}^{\hat{\tau}}(0)+\varepsilon(\mathbf{F}^{\hat{\tau}}))'(0).\mathbf{q}+o(\varepsilon \mathbf{q})  $$
  with $\mathbf{F}^{\hat{\tau}}(0)=\mathbf{0}$ and $(\mathbf{F}^{\hat{\tau}})'(0).\mathbf{q}<\mathbf{0}$. Hence $\mathbf{F}^{\hat{\tau}}(\varepsilon \mathbf{q})<\mathbf{0}$ for $\varepsilon$ small enough.

  Due to the continuity of the function $\mathbf{F}^{\tau}$ with respect to $\tau$, the inequality $\mathbf{F}^{\tau}(\varepsilon \mathbf{q})<\mathbf{0}$ holds for all $\tau$ close to $\hat{\tau}$.
  Furthermore the monotony property \eqref{F tau monotone} assures that for any $\mathbf{w} \in B_{\varepsilon}=[\mathbf{0},\varepsilon \mathbf{q}]$, $\mathbf{w}\ne 0$, at least one component of the vector $\mathbf{F}^{\tau}(\mathbf{w})$ is negative.

Thus, for some component $i$ and for some element $k$ of the  sequence, we have the inequality $F_i^{\tau_k}(\mathbf{w}^{M,k})<0$.
Then from \eqref{Eq stat tau}, we get $w_i^{M,k''}(0)>0$ implying that the function $w_i^{M,k}(x)$ is not decreasing (since $\mathbf{w}^{M,k'}(0)=\mathbf{0}$), which contradicts \eqref{Eq stat tau mono}.
 Thus, $\widehat{\mathbf{w}}(0)\ne \mathbf{0}$. Lemma \ref{lem positivity} guarantees the positiveness of $\widehat{\mathbf{w}}(x)$ for all $x \geq 0$.
\end{proof}

Let us note that this lemma implies that pulse solutions are separated from the trivial solution $\mathbf{w}\equiv 0$:
\begin{equation}\label{Eq eta}
\exists \boldsymbol{\eta} > 0 \; \text{such that for any solution} \; \mathbf{w} \; \text{ of \eqref{Eq stat} }, \mathbf{w}(0)>\boldsymbol{\eta}.
\end{equation}


\medskip

\begin{lem} The inequality $\widehat{\mathbf{w}}'(x)<\mathbf{0}$ holds for all $x > 0$.
\label{lem w'<0}
\end{lem}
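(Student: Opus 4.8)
The goal is to show that the limit $\widehat{\mathbf{w}}$ of monotone solutions, already known to satisfy $\widehat{\mathbf{w}}\geq 0$, $\widehat{\mathbf{w}}'\leq 0$, $\widehat{\mathbf{w}}(0)>0$, and the equation $D\widehat{\mathbf{w}}''+\mathbf{F}^{\hat\tau}(\widehat{\mathbf{w}})=0$, in fact has strictly negative derivative for $x>0$, so that it is a genuine pulse. The plan is to differentiate the equation and apply a maximum-principle/Perron–Frobenius argument to the vector-function $\mathbf{p}=\widehat{\mathbf{w}}'\leq 0$.

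First I would set $\mathbf{p}=\widehat{\mathbf{w}}'$. Differentiating $D\widehat{\mathbf{w}}''+\mathbf{F}^{\hat\tau}(\widehat{\mathbf{w}})=0$ gives the linear system $D\mathbf{p}''+(\mathbf{F}^{\hat\tau})'(\widehat{\mathbf{w}}(x))\,\mathbf{p}=0$, whose off-diagonal coefficients $\partial F_i^{\hat\tau}/\partial v_j(\widehat{\mathbf{w}}(x))$ are $\geq 0$ by the monotonicity property \eqref{F tau monotone} (valid since $\widehat{\mathbf{w}}(x)\in\mathcal{C}$, as $0\leq\widehat{\mathbf{w}}(x)\leq\mathbf{w}^-$ by Proposition \ref{lem w^-}). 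Thus $\mathbf{p}\leq 0$ solves a cooperative linear elliptic system. Now suppose $\widehat{\mathbf{w}}'$ is not strictly negative everywhere on $(0,\infty)$: then some component $p_i=\widehat{w}_i'$ attains the value $0$ at some interior point $x_0>0$, where it has a maximum (since $p_i\leq 0$). I would argue componentwise, exactly in the spirit of the proof of Lemma \ref{lem positivity}: at $x_0$ we have $p_i(x_0)=0$, $p_i'(x_0)=0$, $p_i''(x_0)\leq 0$, so the $i$-th equation forces $\sum_{j\neq i}\frac{\partial F_i^{\hat\tau}}{\partial v_j}(\widehat{\mathbf{w}}(x_0))\,p_j(x_0)\geq 0$; since every term in this sum is $\leq 0$, each term vanishes, and because the relevant partials are strictly positive (this is the structural input: $\partial F_3/\partial v_8$, $\partial F_4/\partial v_8$, etc., are $>0$ on the region of interest, and the chain of dependencies $1\to 3,6$; $3,4,7\to 8$; etc. is the same chain used throughout the paper), the coupled components $p_j(x_0)$ must also vanish. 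Following the dependency graph of $\mathbf{F}^{\hat\tau}$ as in Lemma \ref{lem positivity} one concludes $\mathbf{p}(x_0)=\mathbf{0}$, i.e. $\widehat{\mathbf{w}}'(x_0)=\mathbf{0}$.

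Once $\widehat{\mathbf{w}}'(x_0)=\mathbf{0}$ at an interior point, I would invoke uniqueness for the Cauchy problem associated to $D\widehat{\mathbf{w}}''+\mathbf{F}^{\hat\tau}(\widehat{\mathbf{w}})=0$: the constant function $\widehat{\mathbf{w}}\equiv\widehat{\mathbf{w}}(x_0)$ solves it iff $\mathbf{F}^{\hat\tau}(\widehat{\mathbf{w}}(x_0))=\mathbf{0}$, and since $\widehat{\mathbf{w}}''(x_0)=-D^{-1}\mathbf{F}^{\hat\tau}(\widehat{\mathbf{w}}(x_0))$ while $\widehat{\mathbf{w}}'(x_0)=\mathbf{0}$, either $\widehat{\mathbf{w}}(x_0)$ is a zero of $\mathbf{F}^{\hat\tau}$ — forcing $\widehat{\mathbf{w}}$ constant equal to one of $\mathbf{w}^+,\bar{\mathbf{w}},\mathbf{w}^-$, which is incompatible with $\widehat{\mathbf{w}}'\leq 0$, $\widehat{\mathbf{w}}(0)>0$ and $\widehat{\mathbf{w}}(+\infty)=\mathbf{0}$ unless $\widehat{\mathbf{w}}\equiv 0$, contradicting $\widehat{\mathbf{w}}(0)>0$ — or else the solution is not constant and we get a contradiction with $\mathbf{p}\leq 0$ having an interior zero (a point where the non-positive solution $\mathbf{p}$ of the cooperative system touches $0$ while not being identically $0$). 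For the latter it is cleanest to apply the strong maximum principle for cooperative systems directly: $\mathbf{p}\leq 0$ solves $D\mathbf{p}''+(\mathbf{F}^{\hat\tau})'(\widehat{\mathbf{w}})\mathbf{p}=0$ with nonnegative off-diagonal coupling, so if one component vanishes at an interior point it vanishes identically, and then the coupling propagates this to all components, giving $\mathbf{p}\equiv\mathbf{0}$, again the contradiction above. It remains to treat $x_0=0$ separately: there $p_i(0)=\widehat{w}_i'(0)=0$ already by the boundary condition, so instead one looks at the behaviour just to the right of $0$ and uses $p_i''(0)\leq 0$ together with the same componentwise argument to rule out $\widehat{w}_i'\equiv 0$ near $0$; more simply, the strong maximum principle again gives that a component of $\mathbf{p}$ cannot attain an interior maximum value $0$ unless identically zero, and $x>0$ points are interior.

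The main obstacle is the bookkeeping in the componentwise maximum-principle step: verifying that the particular coupling structure of $(\mathbf{F}^{\hat\tau})'$ (which off-diagonal entries are strictly positive, and that the directed graph they form is strongly connected enough to propagate a zero of $\mathbf{p}$ through all eight components) really does force $\mathbf{p}(x_0)=\mathbf{0}$ from a single vanishing component. This is exactly the combinatorial content already exploited in Lemma \ref{lem positivity} and in the reduction-to-triangular-form argument after Proposition \ref{lem Stability}, so I would simply cite that dependency ordering ($i=1\Rightarrow$ look at $3$ or $6$; $i=3,4,7\Rightarrow$ look at $8$; $i=8\Rightarrow$ look at $1,6$; $i=6\Rightarrow$ look at $2,5$; $i=2\Rightarrow$ look at $4,5$; $i=5\Rightarrow$ look at $7$) rather than redo it. Everything else — differentiating the equation, checking the cooperative sign condition via \eqref{F tau monotone}, and invoking Cauchy uniqueness — is routine.
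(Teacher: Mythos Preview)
Your approach is correct in outline and shares the core idea with the paper's proof: differentiate the system, exploit the cooperative structure, and apply a maximum principle to $-\widehat{\mathbf{w}}'$. The paper's execution, however, is considerably shorter. It never propagates through the dependency graph and never invokes Cauchy uniqueness for the full system. Instead, it stays with the single component $i$ for which $\widehat w_i'(x_0)=0$: setting $v=-\widehat w_i'\ge 0$, the differentiated $i$-th equation reads
\[
D_i v'' + \frac{\partial F_i^{\hat\tau}}{\partial v_i}(\widehat{\mathbf w})\,v \;=\; \sum_{j\ne i}\frac{\partial F_i^{\hat\tau}}{\partial v_j}(\widehat{\mathbf w})\,\widehat w_j'\,,
\]
and the right-hand side has a sign because $\partial F_i^{\hat\tau}/\partial v_j\ge 0$ and $\widehat w_j'\le 0$. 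Thus $v\ge 0$ solves a scalar elliptic inequality and attains the interior minimum $0$ at $x_0$, so $v\equiv 0$ by the scalar strong maximum principle; hence $\widehat w_i$ is constant, hence $\widehat w_i\equiv 0$ by the decay at infinity, contradicting Lemma~\ref{lem w>0}. That is the whole argument.

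One point in your write-up is actually an over-claim: the propagation ``through the dependency graph to all eight components'' need not succeed. For $\hat\tau=1$ one has $\alpha^{\hat\tau}=0$, so $\partial F_8^{\hat\tau}/\partial v_1=\partial F_8^{\hat\tau}/\partial v_6=0$ and the eighth equation decouples; starting from $i=8$ there is nothing to propagate to. This does not damage the conclusion, because (as the paper's argument shows) a \emph{single} identically vanishing component already contradicts the strict positivity $\widehat{\mathbf w}>0$ from Lemma~\ref{lem w>0}. So your detours through graph-chasing and Cauchy uniqueness are unnecessary, and the irreducibility hypothesis you are implicitly relying on is both unavailable at $\hat\tau=1$ and not needed.
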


\begin{proof}
Suppose that there exits a component $\widehat{\mathbf{w}}$ of the solution and a value $x_0$ such that $\widehat{w}'_i(x_0)=0$. Denote $v=-\widehat{w}_i'$. Then $v\geq 0$
since $\widehat{\mathbf{w}}$ is a monotone solution of problem \eqref{Eq stat tau}, \eqref{Eq stat tau boundaries}. Differentiation of the $i$th of system \eqref{Eq stat tau} gives the following equality:
\begin{equation}
{D_i}v''(x)+v(x)\frac{\partial F_i^{\hat{\tau}}}{\partial v_i}(\mathbf{\widehat{w}}(x))=\sum_{j\ne i}\frac{\partial F_j^{\hat{\tau}}}{\partial v_i}(\mathbf{\widehat{w}}(x))\hat{w}_j'(x).
\label{Eq for v}
\end{equation}
We introduce the operator

$$Lv={D_i}v''+v\frac{\partial F_i^{\hat{\tau}}}{\partial v_i}(\mathbf{\widehat{w}}) . $$
Since $\mathbf{F}^{\hat{\tau}}$ satisfies the monotonicity condition and, according to Lemma \ref{lem w^-} $\mathbf{\widehat{w}}$ is in $\mathcal{C}$ (defined in \eqref{Def C}), then $\frac{\partial F_j^{\hat{\tau}}}{\partial v_i}(\mathbf{\widehat{w}}(x)) < 0$  for all $j \neq i$.
 Moreover, the solution $\widehat{\mathbf{w}}$ is decreasing. Consequently, the right-hand side of \eqref{Eq for v} is positive. Thus, $Lv\geq 0$.
  The operator $L$ is elliptic and $v$ reaches its minimum at $x_0$. Hence, the maximum principle states that $v\equiv 0$.
  Since $\widehat{\mathbf{w}}$ vanishes at infinity, it follows that $\widehat{w}_i\equiv 0$, which is impossible.
\end{proof}

Lemmas \ref{lem w>0} and \ref{lem w'<0} prove that $\widehat{\mathbf{w}}$ is a pulse, that is, a solution of problem \eqref{Eq stat tau}, \eqref{Eq stat tau boundaries} with condition \eqref{Eq stat tau mono}. Hence the set of pulse solutions is closed in $E^1_{\mu}$.
We will now conclude the proof of Theorem \ref{thm separation} and prove that a sequence of non monotone solutions cannot converge to a monotone solution.

\begin{proof}
Let us assume that the separation between monotone and non monotone solutions does not hold. Then we can find a sequence of monotone solutions $(\mathbf{w}^{M,k})$ and a sequence of non monotone solutions $(\mathbf{w}^{N,k})$ such that $\|\mathbf{w}^{M,k}-\mathbf{w}^{N,k}\|_{E^1_{\mu}}\rightarrow 0$. As it is shown in Lemmas \ref{lem w>0} and \ref{lem w'<0}, we can extract a subsequence from $(\mathbf{w}^{M,k})$, still denoted by $(\mathbf{w}^{M,k})$, that converges to some pulse solution $\widehat{\mathbf{w}}$. Then we have the convergence  $\mathbf{w}^{N,k}\rightarrow \widehat{\mathbf{w}}$, possibly for a subsequence. Next,  we can extract a subsequence, for which we keep the same notation, for which some given component $i$ is non monotone. The solution $\mathbf{w}^{N,k}$ belongs to $\mathcal{C}^1$. Hence there exists a sequence $x_k>0$ such that $\mathbf{w}^{N,k'}(x_k)=\mathbf{0}$. Then there exists a subsequence, still written as $x_k$, such that $x_k\rightarrow x_0$. There are three possible values for the limit $x_0$:
\begin{itemize}
\item $x_0\in ]0,+\infty[$,
\item $x_0 = +\infty$,
\item $x_0=0$.
\end{itemize}
We will show that none of them is possible.
If $x_0\in ]0,+\infty[$, then $\widehat{w}_i'(x_0)=0$. We obtain a contradiction with Lemma 3.8.

 Next, we consider the case $x_0 = +\infty$. We claim that for $k$ and $y$ large enough the non monotone solution is decreasing: $\mathbf{w}^{N,k'}(x) <\mathbf{0}$ for $x\geq y$. Consequently, we will have $x_0<+\infty$ reducing this case to the previous one.
According to Lemma \ref{lem q<0}, there exits a vector $\mathbf{q}>\mathbf{0}$ such that for all $\tau \in [0,1]$ we have $(\mathbf{F}^{\tau})'(0).\mathbf{q}<\mathbf{0}$.
  Since the function $(\mathbf{F}^{\tau})'$ is continuous with respect to $\tau$, then there exist $\boldsymbol{\delta}>0$ and $\varepsilon > 0$ such that for $\tau$ and $\mathbf{w}\in \R^8$ satisfying $|\tau-\hat{\tau}|<\varepsilon$ and $\|\mathbf{w}\|_{\R^8}<\boldsymbol{\delta}$, we have the inequality $(\mathbf{F}^{\tau})'(\mathbf{w}).\mathbf{q}<\mathbf{0}$.

 The solution $\widehat{\mathbf{w}}$ is decreasing (Lemma \ref{lem w'<0}) and converges to $0$ at infinity. Hence there exists $y>0$ such that for all $x\geq y$ we have the estimate $\|\widehat{\mathbf{w}}(x)\|_{\R^8}<\boldsymbol{\delta}$. The sequence $(\mathbf{w}^{N,k})$ converges to $\widehat{\mathbf{w}}$ in $E_{\mu}^1$. Then for $k_1$ large enough and $k\geq k_1$ we have $|\tau_k-\hat{\tau}|<\varepsilon$ and $|\mathbf{w}^{N,k}(x)\|_{\R^8}<\boldsymbol{\delta}$ for $x \geq y$. Moreover, $(\mathbf{w}^{N,k})'(y)<\mathbf{0}$.

 Let us show that $(\mathbf{w}^{N,k})'(x)$ remains negative for $x \geq y$. We differentiate \eqref{Eq stat tau}  and denote $\mathbf{z}^k(x)=-(\mathbf{w}^{N,k})'(x)$. Then
 \begin{equation}
 \label{zzz}
 D\mathbf{z}^{k''}+\mathbf{F}^{\tau_k'}(\mathbf{w}^{N,k}).\mathbf{z}^k=0.
 \end{equation}
 Suppose that there exist $k\geq k_1$ and $x>y$ such that ${z_i}^k(x)< {0}$ for some component of the function $\mathbf{z}^k(x)$. Since $\mathbf{z}^k(y)>\mathbf{0}$ and $\mathbf{z}^k(\infty)=\mathbf{0}$, then there exists a constant $a$ such that $\boldsymbol{\zeta}^k(x)=\mathbf{z}^k(x)+a\mathbf{q}\geq0$, and a value $s>y$ such that ${\zeta_i}^k(s)=0$.
 Then the function ${\zeta_i}^k(x)$ satisfies the equation:
 \begin{equation}
 {D_i} {\zeta_i}^{k''}+ {F_i}^{\tau_k'}(\mathbf{w}^{N,k}){\zeta_i}^k+a_k=0,
 \end{equation}
where $a_k=-a \sum_{j \neq i} {F_j}^{\tau_k'}(\mathbf{w}^{N,k}){q_j}>\mathbf{0}$ since $k>k_1$ and $x>y$. Thus $({\zeta_i}^{k})''(s)<0$, which contradict the fact that $s$ is a minimum.

Finally, we consider the case $x_0=0$. Since $\widehat{\mathbf{w}}$ is a solution of problem \eqref{Eq stat tau}, \eqref{Eq stat tau boundaries} with conditio  \eqref{Eq stat tau mono}, it follows that $\mathbf{F}^{\widehat{\tau}}(\widehat{\mathbf{w}}(0))\geq \mathbf{0}$. First, let us prove that
\begin{equation}
\mathbf{F}^{\hat{\tau}}(\widehat{\mathbf{w}}(0))> \mathbf{0}.
\end{equation}
Suppose that this is not true. Then there exists a component of the solution such that $F_i(\widehat{\mathbf{w}}(0))=0$. Set $z(x)=-\widehat{w}_i'(x)$, then we differentiate \eqref{Eq wave tau equation} in order to obtain:
\begin{equation}
{D_i}z''(x)+\frac{\partial F_i^{\hat{\tau}}}{\partial v_i}(\widehat{\mathbf{w}}(x))z(x)+a(x)=0,
\end{equation}
where $a(x)=-\sum_{j\ne i}\frac{\partial F_i^{\hat{\tau}}}{\partial v_j}(\widehat{\mathbf{w}}(x))\widehat{w}_j'(x) \geq 0$. Since $z(0)=0$ and $z'(0)=0$, it leads to a contradiction with the Hopf lemma.

The functions $\mathbf{w}^{N,k}(x)$ converge to $\widehat{\mathbf{w}}(x)$. Consequently for $k$ large enough we have $\mathbf{F}^{\tau_k}(\mathbf{w}^{N,k}(0))>\mathbf{0}$. Hence, there exist $\varepsilon>0$ small enough and a constant $k_1$ such that for $x\in]0,\varepsilon[$ and $k\geq k_1$ we have $\mathbf{F}^{\tau_k}(\mathbf{w}^{N,k}(x))>\mathbf{0}$. Hence $(\mathbf{w}^{N,k})'(x)<\mathbf{0}$ in the interval $]0,\varepsilon[$, and the convergence $x_k\rightarrow 0$ does not hold.


\end{proof}

In this section we obtained a priori estimates of monotone solutions  and proved that they are separated from non monotone solutions. In the next section we will prove  Theorem \ref{Thm principal}.

\section{Proof of the main theorem}
\label{Sec Proof}

In this section the main Theoreme \ref{Thm principal} will be proved. The first part of the proof will be devoted to the existence of pulse solutions if the wave speed is positive. The function $g$ defined at the first step of the homotopy \eqref{Def Hom 1} will be established in order to preserve the positiveness of the wave speed along the homotopy. Then we will prove that the pulse problem \eqref{Eq stat tau}, \eqref{Eq stat tau boundaries} with condition \eqref{Eq stat tau mono} possesses a solution for $\tau=1$. Finally, we will find the value of the topological degree of the operator $A^1$, and we will show that it does not change along the homotopy. We will use here Theorems \ref{thm bound weighed} and \ref{thm separation} presented in Section \ref{Sec Estimation}. The second part of the proof will be devoted to the non existence of solution of problem \eqref{Eq stat} if the wave speed $c$ in  problem \eqref{Eq wave} is non positive.

\subsection{Positiveness of the wave speed}
Let us start with the preservation of the positiveness of the wave speed.
We assume that the wave speed $c$, also denoted as $c^{0}=c$, in problem \eqref{Eq wave} is positive:
\begin{equation}
c>0.
\end{equation}
We will construct a function $g$ that satisfies \eqref{condition g} and which will provide a positive wave speed along the homotopy.

 We begin with $\tau \in ]0,\tau_1]$. The definition of the homotopy \eqref{Def Hom 1} leads to the inequalilty $F_8^{\tau}>F_8^0$ independently of the choice of $g$. Consequently, using the result of \cite{Spectrum} (pages 111-134), we have $c^{\tau}\geq c^0$. Hence
\begin{equation}
\forall \tau \in [0,\tau_1], \ c^{\tau}>0.
\end{equation}

Let us consider $\tau \in ]\tau_1,1]$. The homotopy on this interval is defined by equality \eqref{Def Hom 2}.  For $\mathbf{w}\in \mathcal{C}$ and for $\tau\in [\tau_1,1]$ we have
\begin{equation}
F_8^{\tau}(\mathbf{w})\geq \tau_1 g(T)-h_8T \equiv G(T).
\end{equation}
The condition \eqref{condition g} on $g(T)$ leads to $G(0)=0$ and $G'(0)=-h_8<0$. Then we can find a function $g(T)$ such that $G(T)$ satisfies the following conditions:
\begin{equation}
 \begin{cases}
 G(T) \text{ possesses exactly three zeros in } [0,T^-]: 0,\ T_1, \ T_2, \; \\ 0 < T_1 < T_2 < T^- , \\
 G(T)<0 \text{ for } 0<T<T_1,\ G(T)>0 \text{ for } T_1<T<T_2,\ \\ G(T)<0 \text{ for } T_2<T<T^-.
 \end{cases}
 \label{condition g 2}
 \end{equation}

We can now establish the condition providing the positiveness of the wave speed along the homotopy:

\begin{prop}
Suppose that function $g(T)$ satisfies conditions \eqref{condition g}, \eqref{condition g 2}. Furthermore, assume that
\begin{equation}
\int_0^{T_2}G(s)ds>0 .
\label{condition g 3}
\end{equation}
Then $c^{\tau}>0$ for $\tau \in [\tau_1,1]$.
\end{prop}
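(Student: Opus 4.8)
The plan is to bound $c^{\tau}$ below by the speed of a genuine scalar bistable front built from the lower bound $G$. Let $\mathbf{u}^{\tau}$ denote the monotonically decreasing wave of \eqref{Eq wave tau equation}--\eqref{Eq wave tau condition} with speed $c^{\tau}$; it exists and is decreasing by \cite{TravelingWave}, since $\mathbf{F}^{\tau}$ is monotone on $\mathcal{C}$ and has the same bistable configuration of zeros as $\mathbf{F}$. By monotonicity $\mathbf{0}\le\mathbf{u}^{\tau}(x)\le\mathbf{w}^-$ and $\mathbf{w}^-\in\mathcal{C}$, so $\mathbf{u}^{\tau}(x)\in\mathcal{C}$ for all $x$; hence the inequality $F_8^{\tau}(\mathbf{w})\ge G(T)$ (valid on $\mathcal{C}$ for $\tau\in[\tau_1,1]$) gives, writing $u_8:=u_8^{\tau}$,
\begin{equation}
D_8 u_8'' + c^{\tau}u_8' + G(u_8)\ \le\ D_8 u_8'' + c^{\tau}u_8' + F_8^{\tau}(\mathbf{u}^{\tau})\ =\ 0,
\end{equation}
with $u_8(-\infty)=T^-$, $u_8(+\infty)=0$, and $u_8$ strictly decreasing.

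Next I would record the relevant facts about the scalar equation $D_8 w'' + \bar{c}\,w' + G(w)=0$. By \eqref{condition g} and \eqref{condition g 2}, $G$ is smooth with $G(0)=G(T_1)=G(T_2)=0$, $G<0$ on $(0,T_1)\cup(T_2,T^-)$, $G>0$ on $(T_1,T_2)$ and $G'(0)=-h_8<0$; thus on $[0,T_2]$ it has exactly the bistable structure with stable zeros $0$ and $T_2$ and unstable zero $T_1$. Classical bistable theory then yields a monotone front $w$ from $T_2$ to $0$ with a unique speed $\bar{c}$; multiplying the profile equation by $w'$ and integrating over $\R$ gives $\bar{c}\int_{\R}(w')^{2}\,dx= -\int_{\R}G(w)w'\,dx= \int_{0}^{T_2}G(s)\,ds$, which is positive by \eqref{condition g 3}, so $\bar{c}>0$.

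The comparison step then runs as follows. Assume, for contradiction, that $c^{\tau}\le 0$. The function $\phi(x,t):=u_8(x-c^{\tau}t)$ takes values in $[0,T^-]$ and, by the displayed differential inequality, is a supersolution of $\phi_t=D_8\phi_{xx}+G(\phi)$. Since $u_8(-\infty)=T^->T_2>T_1$, there is a half-line on which $u_8>T_1+\delta$ for some fixed $\delta\in(0,T_2-T_1)$; choose an interval $I$ of length $L$ inside that half-line, with $L$ large enough that the classical invasion threshold for the dominant state $T_2$ is met, and let $\psi$ solve the same equation with $\psi(\cdot,0)=(T_1+\delta)\mathbf{1}_I\le u_8=\phi(\cdot,0)$. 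The parabolic comparison principle gives $\psi(\cdot,t)\le\phi(\cdot,t)$ for all $t\ge0$, while, because $\bar{c}>0$, the bistable spreading property gives $\psi(x,t)\to T_2$ as $t\to+\infty$, locally uniformly in $x$. Hence $\liminf_{t\to\infty}\phi(x_0,t)\ge T_2$ for every fixed $x_0$. But if $c^{\tau}<0$ then $\phi(0,t)=u_8(|c^{\tau}|t)\to u_8(+\infty)=0$, and if $c^{\tau}=0$ then $\phi(x_0,t)=u_8(x_0)<T_2$ for $x_0$ large; either way this contradicts $\liminf_{t\to\infty}\phi(x_0,t)\ge T_2$. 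Therefore $c^{\tau}>0$ for every $\tau\in[\tau_1,1]$.

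The main obstacle is the comparison step: one has to arrange the argument so as to obtain $c^{\tau}>0$ and not merely $c^{\tau}\ge0$. It is worth noting that the crude energy identity --- multiply the eighth equation of \eqref{Eq wave tau equation} by $u_8'$ and integrate --- yields only $c^{\tau}\int_{\R}(u_8')^{2}\,dx\ge\int_{0}^{T^-}G(s)\,ds$, and since $G<0$ on $(T_2,T^-)$ this right-hand side may be negative; this is precisely why the hypotheses are formulated as a shape condition \eqref{condition g 2} plus the sign of $\int_0^{T_2}G$ rather than the sign of $\int_0^{T^-}G$, and why the spreading argument, which only uses $G$ up to $T_2$, is needed. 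The remaining ingredients --- that $\mathbf{u}^{\tau}$ stays in $\mathcal{C}$, the preservation of the bistable structure of $\mathbf{F}^{\tau}$ along the homotopy, and the classical existence and spreading results for scalar bistable equations --- are routine.
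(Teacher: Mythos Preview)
Your argument is correct. Both your proof and the paper's hinge on the same comparison with the scalar bistable nonlinearity $G$, but they organise the comparison differently. The paper works at the system level: it introduces an auxiliary monotone system $\mathbf{H}^{\tau}$ obtained from $\mathbf{F}^{\tau}$ by replacing $F_8^{\tau}$ with $G$, applies the comparison principle for monotone parabolic systems between the Cauchy problems for $\mathbf{F}^{\tau}$ and $\mathbf{H}^{\tau}$, and then invokes global convergence to travelling waves (from \cite{TravelingWave}) for both systems to conclude $c^{\tau}\ge c_1>0$, where $c_1$ is the scalar $G$-front speed. Your route stays entirely scalar: you observe that the eighth component $u_8$ of the actual wave profile is a supersolution of the scalar bistable equation with nonlinearity $G$, and then run a spreading (Aronson--Weinberger/Fife--McLeod) argument by contradiction to rule out $c^{\tau}\le 0$. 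The paper's version is slightly more informative (it yields the quantitative lower bound $c^{\tau}\ge c_1$), while yours is somewhat lighter in that it never appeals to the heavy system-level convergence theorem and uses only classical scalar results; your final paragraph explaining why the naive energy identity only controls $\int_0^{T^-}G$ and hence cannot close the argument is a nice complement to either approach.
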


\begin{proof}
Let us consider the scalar parabolic equation
\begin{equation}\label{Eq para sca}
\frac{\partial \theta}{\partial t}=\frac{\partial^2 \theta}{\partial x^2}+G(\theta)
\end{equation}
on the whole axis with the initial condition $\theta(x,0)=\theta^0(x)$, where $\theta^0(x)$ is a monotonically decreasing function, $\theta^0(+\infty)=T^+=0$ and $\theta^0(-\infty)=T_1$. Since the function $G(T)$ satisfies the condition \eqref{condition g 2}, the solution of \eqref{Eq para sca} converges to a traveling wave $\theta_1(x-c_1t)$. The wave speed $c_1$ has the sign of the integral \eqref{condition g 3}. Hence, by hypothesis, $c_1>0$.

Next, we claim that
\begin{equation}
c^{\tau}>c_1, \; \forall \tau \in [\tau_1,1] .
\end{equation}
In order to prove this inequality, we consider the parabolic problem:
\begin{equation}\label{Eq para pb}
\frac{\partial \mathbf{v}^{\tau}}{\partial t}=\frac{\partial^2 \mathbf{v}^{\tau}}{\partial x^2}+\mathbf{F}^{\tau}(\mathbf{v}^{\tau}), \quad \mathbf{v}^{\tau}(x,0)=\mathbf{v}^{0}(x).
\end{equation}
Let us assume that $\mathbf{v}^{0}(-\infty)=\mathbf{w}^-$ and $\mathbf{v}^{0}(+\infty)=w^+$, then the solution of \eqref{Eq para pb} converges to the wave solution $\mathbf{u}^{\tau}(x-c^{\tau}t)$ with the wave speed $c^{\tau}$.

Similarly to \eqref{Eq para pb} we consider the problem
\begin{equation}\label{Eq para pb sca}
\frac{\partial \mathbf{z}^{\tau}}{\partial t}=\frac{\partial^2 \mathbf{z}^{\tau}}{\partial x^2}+\mathbf{H}^{\tau}(\mathbf{z}^{\tau}), \quad \mathbf{z}^{\tau}(x,0)=\mathbf{z}^{0}(x),
\end{equation}
where $H_i(\mathbf{v})=F_i^{\tau}(\mathbf{v})$ for $i=1,...,7$ and $H_8(\mathbf{v})=G(v_8)$. Furthermore, we assume that $z_8^0(x)=\theta^0(x)$ and $z_i^0(x)\equiv 0$ for $i=1,...,7$.

Let us assume that $v_8^0(x)\geq z_8^0(x)$ on the whole axis. Taking into account that $\mathbf{F}^{\tau}(\mathbf{v})\geq\mathbf{H}^{\tau}(\mathbf{v})$, and $\mathbf{v}^0\geq \mathbf{z}^0$, we conclude that $\mathbf{v}^{\tau}(x,t)\geq \mathbf{z}^{\tau}(x,t)$ for all $x\in \R$ and $t>0$. Since $\mathbf{v}^{\tau}$ converges to the wave with the speed $c^{\tau}$ and $\mathbf{z}^{\tau}$ to the wave with the speed $c_1$, then $c^{\tau}\geq c_1>0$.
\end{proof}

\medskip

We have shown that it is possible to find a function $g(T)$ satisfying \eqref{condition g} such that the wave speed $c^{\tau}$ remains positive along the homotopy. Let us fix such a function $g(T)$ and proceed to the resolution of the pulse problem for $\tau=1$.


\subsection{Scalar equation: the case $\tau=1$}

Consider problem \eqref{Eq stat tau}, \eqref{Eq stat tau boundaries} with condition \eqref{Eq stat tau mono} for $\tau=1$.
We have the equation for the last component $T$ independent of the other equations:
\begin{equation}\label{Eq stat T 1}
\begin{cases}
{D_8}T''+P(T)+\tau_1g(T)=0, \\
T'(0)=T(+\infty)=0, \\
T'(0)<0, \quad \text{ for } x>0.
\end{cases}
\end{equation}

\medskip

\begin{lem}\label{lem exit T}
 Let $P(T)$ satisfy \eqref{Condition P} and $g(T)$ satisfy \eqref{condition g}, \eqref{condition g 2}, \eqref{condition g 3}. Then problem \eqref{Eq stat T 1} possesses a unique solution.
\end{lem}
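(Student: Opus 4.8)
The strategy is to treat \eqref{Eq stat T 1} as a scalar second-order autonomous ODE and analyse it via the phase-plane / energy method, exactly as one treats pulse (homoclinic-type, here half-homoclinic) solutions of $D_8 T'' + N(T) = 0$ with $N(T) = P(T) + \tau_1 g(T)$. First I would record the basic structural facts about $N$: since the zeros of $P^{\tau_1} = P + \tau_1 g$ coincide with those of $P$ (this was established in Section \ref{Sec Homotopy}), $N$ has exactly the three non-negative zeros $0 < \bar T < T^-$, with $N'(0) < 0$, $N'(\bar T) > 0$, $N'(T^-) < 0$, and $N(0)=0$. In particular $0$ is a (linearised) centre-unstable configuration: near $T=0$ the equilibrium of the first-order system $(T,T')$ is a saddle, because $D_8 T'' = -N(T) \approx -N'(0) T$ with $-N'(0) > 0$.

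Next I would introduce the energy (first integral) $E(T) := \tfrac{D_8}{2}(T')^2 + W(T)$ where $W(T) := \int_0^{T} N(s)\,ds$, which is constant along any solution of the ODE. A solution of \eqref{Eq stat T 1} is a trajectory with $T'(0) = 0$ and $T(x) \to 0$, $T'(x) \to 0$ as $x \to +\infty$; hence $E \equiv W(0) = 0$ along it, forcing $W(T(0)) = 0$ with $T(0) > 0$ (positivity of $T(0)$ follows from $N'(0)<0$, so $T \equiv 0$ is the only solution with $T(0)=0$, and from \eqref{Eq eta}-type separation). So I must show there is a unique value $T_m > 0$ with $W(T_m) = 0$, $W(T) < 0$ on $(0,T_m)$, and $N(T_m) \neq 0$ (so that the trajectory actually turns around there with $T'' \neq 0$), and moreover $T_m < T^-$ (to stay in the biologically relevant range and to match the half-homoclinic picture to the saddle at $0$). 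Uniqueness of the pulse then follows because the prescription "start at rest at $T = T_m$, flow forward, it decreases monotonically to $0$" determines $T(x)$ uniquely up to the translation that is fixed by $T'(0) = 0$; the monotonicity $T' < 0$ for $x > 0$ comes from $E \equiv 0$ giving $\tfrac{D_8}{2}(T')^2 = -W(T) > 0$ for $0 < T < T_m$, so $T'$ cannot vanish on $(0,\infty)$, and the sign is negative because $T$ must reach $0$.

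The analytic heart of the argument — and the step I expect to be the main obstacle — is establishing the sign/shape of $W$: namely that $W$ is negative on $(0,\bar T)$ (immediate, since $N < 0$ there and $W(0)=0$), then that $W$ has a unique zero $T_m$ in $(\bar T, T^-)$ with $W < 0$ on $(\bar T, T_m)$ and $W > 0$ on $(T_m, T^-]$. Here is where the hypotheses on $g$ enter decisively: condition \eqref{condition g 2} makes $N = P + \tau_1 g$ change sign appropriately on $[0,T^-]$ (in fact $G(T) = \tau_1 g(T) - h_8 T$ was designed with exactly the three-zero sign pattern, and $N \ge G$ pointwise since the prothrombinase/tenase terms are non-negative), while the integral condition \eqref{condition g 3}, $\int_0^{T_2} G(s)\,ds > 0$, guarantees $\int_0^{T^-} N(s)\,ds = W(T^-) > 0$. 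Combined with $W < 0$ on $(0,\bar T)$ and the convexity information coming from the signs of $N$ on each subinterval, the intermediate value theorem gives existence of $T_m \in (\bar T, T^-)$, and strict monotonicity of $W$ on $(\bar T, T^-)$ past its minimum (since $W' = N$ is first negative then positive there) gives uniqueness of $T_m$ and hence of the pulse. Finally $N(T_m) \neq 0$ because $T_m \neq \bar T, T^-$ and those are the only zeros of $N$ in the range, so the trajectory genuinely reflects and $T''(0) = -N(T_m)/D_8 > 0$ is consistent with $T'(0)=0$, $T' < 0$ for $x > 0$; decay to $0$ at $+\infty$ is exponential because $0$ is a hyperbolic saddle. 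This completes existence and uniqueness.
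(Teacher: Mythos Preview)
Your approach is essentially the same as the paper's: both reduce the scalar problem to the first integral $\tfrac{D_8}{2}(T')^2 + W(T) = 0$ with $W(T)=\int_0^T N$, and both locate the unique maximum value $T_m=T(0)\in(\bar T,T^-)$ as the unique positive zero of $W$ by combining the sign pattern of $N=P+\tau_1 g$ with the integral condition \eqref{condition g 3}. Two small slips to fix: (i) from $N\ge G$ and \eqref{condition g 3} you get $W(T_2)\ge\int_0^{T_2}G>0$, and then $W(T^-)\ge W(T_2)>0$ because $N>0$ on $(T_2,T^-)\subset(\bar T,T^-)$ --- the inequality $\int_0^{T^-}N\ge\int_0^{T^-}G$ alone is not enough since $G<0$ on $(T_2,T^-)$; (ii) since $T_m\in(\bar T,T^-)$ gives $N(T_m)>0$, one has $T''(0)=-N(T_m)/D_8<0$, not $>0$, and this is indeed what forces $T'<0$ just to the right of $0$.
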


\begin{proof}
The second-order equation in \eqref{Eq stat T 1} can be written as the system of first-order equations:
\begin{equation}\label{SystT 1}
\begin{cases}
w'=p\\
p'=-f(w),
\end{cases}
\end{equation}
where $f(w)=({P(w)+\tau_1g(w)})/{{D_8}}$. 
From \eqref{SystT 1} it follows that:
\begin{equation}
pp'=-w'f(w).
\end{equation}
Integrating this equation, and taking into account that $p(0)=0$, we get:
\begin{equation}\label{eq p^2}
\frac{1}{2}p^2(w)=\int_{w}^{w(0)}f(u)du.
\end{equation}

The function $f$ only changes sign once on $]0,T^-[$ since $P$ and $g$ satisfy respectively \eqref{Condition P} and \eqref{condition g}. Moreover, \eqref{condition g 3} assures that the integral of $f$ on $[0,T^-]$ is strictly positive, and Lemma \eqref{lem w^-} guarantees that $w(0)<T^-$, hence it exists a unique value $w_0=w(0)$ satisfying:
\begin{equation}\label{Condition w_0}
\int_0^{w_0}f(u)du=0 , \;\;\; 0<w_0<T^- .
\end{equation}
Hence, the right-hand side of equation \eqref{eq p^2} is non-negative. Its solution provides a solution of system \eqref{SystT 1}.

\end{proof}

Thus scalar equation possesses a pulse solution.
We now consider system \eqref{Eq stat tau} for $\tau=1$. We claim the following result.

\begin{prop}\label{prop ex pulse}
Let $P(T)$ satisfy \eqref{Condition P} and $g(T)$ satisfy \eqref{condition g}, \eqref{condition g 2} and \eqref{condition g 3}. Then problem \eqref{Eq stat tau}, \eqref{Eq stat tau boundaries} with condition \eqref{Eq stat tau mono} possesses a unique solution for $\tau=1$.
\end{prop}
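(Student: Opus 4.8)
The plan is to exploit the triangular structure that \eqref{Eq stat tau}, \eqref{Eq stat tau boundaries}, \eqref{Eq stat tau mono} acquires at $\tau=1$. By \eqref{Def Hom 3}, at $\tau=1$ one has $F_8^{1}(\mathbf{v})=P(T)+\tau_1 g(T)$, a function of $T=v_8$ alone, while $F_i^{1}=F_i$ for $i=1,\dots,7$. Thus the last equation decouples and is precisely \eqref{Eq stat T 1}; by Lemma \ref{lem exit T} it has a unique solution $T(\cdot)$, which is strictly decreasing for $x>0$, satisfies $T(0)=w_0\in(0,T^-)$ and $T(+\infty)=0$, and (linearising at $0$, where $g\equiv 0$ and $P'(0)<0$) decays exponentially. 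It then remains, with $T(x)$ a known coefficient, to solve the seven scalar problems
\[
D_i v_i''+F_i\big(v_1,\dots,v_7,T(x)\big)=0,\qquad v_i'(0)=0,\quad v_i(+\infty)=0,\qquad i=1,\dots,7,
\]
and to verify $v_i'<0$ for $x>0$; the resulting $\mathbf{w}=(v_1,\dots,v_7,T)$ is then a pulse, and uniqueness will be immediate.

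Inspecting \eqref{Def F} one sees that these equations can be solved successively in the order $i=3,4,7,5,2,6,1$, at each stage the $i$-th equation being \emph{linear} in the unknown $v_i$,
\[
D_i v_i''-a_i(x)v_i+b_i(x)=0,
\]
with coefficients assembled from $T$ and from the components already determined: for $i\in\{3,4,7\}$, $a_i=k_iT+h_i$ and $b_i=k_i\rho_iT$; for $i=5$, $a_5=k_5v_7+h_5$ and $b_5=k_5\rho_5v_7$; for $i=6$, $a_6=k_6v_5+\overline{k_6}v_2+h_6$ and $b_6=(k_6v_5+\overline{k_6}v_2)\rho_6$; for $i\in\{1,2\}$, $a_i=h_i$ and $b_i$ is a product of two earlier components (times the constant $k_i$). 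In every case $a_i(x)\ge h_i>0$ is bounded and $b_i\ge 0$ is bounded and exponentially decaying. For such a linear half-line problem with a Neumann condition at $0$, the homogeneous problem $D_iv''-a_iv=0$, $v'(0)=0$, $v(+\infty)=0$ admits only $v\equiv 0$ (a nontrivial bounded solution vanishing at $+\infty$ would attain a nonzero extremum; at an interior extremum the equation forces $v''$ to have the wrong sign, and at $x=0$ the Neumann condition together with $D_iv''(0)=a_i(0)v(0)$ rules out a nonzero extremum there). Hence the solution, if it exists, is unique; its existence together with the bounds $0\le v_i\le\rho_i$ (and $0\le v_i\le\|b_i\|_\infty/h_i$ for $i=1,2$) and exponential decay follows from the sub/supersolution pair $0$ and $\rho_i$ (note $D_i\rho_i''-a_i\rho_i+b_i=-h_i\rho_i<0$) together with the maximum principle, e.g.\ solving on $[0,R]$ with $v(R)=0$ and letting $R\to+\infty$. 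In particular $\mathbf{w}\in\mathcal{C}$, and exponential decay of every component and its derivatives gives $\mathbf{w}\in E^1_\mu$.

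For the monotonicity \eqref{Eq stat tau mono} one proceeds in the same order. Differentiating the $i$-th equation and putting $z_i=-v_i'$ gives
\[
D_iz_i''-a_i(x)z_i=a_i'(x)\big(\rho_i-v_i\big)\quad(i=3,4,7,5,6),\qquad
D_iz_i''-h_iz_i=k_i\big(v_j'v_k+v_jv_k'\big)\quad(i=1,2),
\]
$j,k$ being the two indices on which $F_i$ depends. In each case the right-hand side is $\le 0$ for $x>0$: one has $\rho_i-v_i\ge 0$ and $v_j',v_k'\le 0$, while $a_i'$ equals $k_iT'$ (for $i=3,4,7$), $k_5v_7'$ (for $i=5$) or $k_6v_5'+\overline{k_6}v_2'$ (for $i=6$), all $\le 0$ for $x>0$ since the components entering $a_i$ have, by the inductive step, already been shown strictly decreasing. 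Since $z_i(0)=0$ and $z_i(+\infty)=0$, the maximum principle gives $z_i\ge 0$, and the strong maximum principle gives $z_i>0$ for $x>0$ (note $z_i\not\equiv 0$, for otherwise $v_i$ would be constant, hence $\equiv 0$ by the condition at $+\infty$, contradicting $b_i\not\equiv 0$). Thus $v_i'<0$ for $x>0$, so $\mathbf{w}$ solves \eqref{Eq stat tau}, \eqref{Eq stat tau boundaries} with \eqref{Eq stat tau mono}; positivity of $\mathbf{w}$ follows from $\mathbf{w}'<0$ and $\mathbf{w}(+\infty)=0$. Finally $\mathbf{w}$ is unique: the last component of any pulse must solve \eqref{Eq stat T 1}, so it is the $T$ of Lemma \ref{lem exit T}, and each of the seven linear problems above then has a unique solution.

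The routine parts are the constructions of the scalar solutions and the estimates on their exponential tails. The points requiring care are keeping every component inside $\mathcal{C}$ (so that $\rho_i-v_i\ge 0$ and the monotonicity chain closes) and extracting the strict inequalities $v_i'<0$ via the strong maximum principle and the Neumann condition at the origin — essentially the same mechanism already used in Lemma \ref{lem w'<0}.
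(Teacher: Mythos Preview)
Your proof is correct and follows essentially the same route as the paper: exploit the triangular decoupling at $\tau=1$, solve the scalar equation for $T$ via Lemma~\ref{lem exit T}, then solve the remaining seven linear half-line problems in the order $3,4,7,5,2,6,1$, and differentiate to obtain monotonicity by the maximum principle. The only cosmetic difference is that the paper invokes invertibility of $L=D_i\partial_x^2-a_i(x)$ via a spectral bound to obtain existence and positivity of each $v_i$, whereas you use the sub/supersolution pair $0$ and $\rho_i$ (resp.\ $\|b_i\|_\infty/h_i$ for $i=1,2$) and a limit of bounded-interval problems; your version has the advantage of making the bound $v_i\le\rho_i$ explicit, which is exactly what is needed to close the monotonicity induction.
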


\begin{proof}
The last equation of \eqref{Eq stat tau} for $\tau=1$, that is the equation for $T$, is independent from the other equations. Lemma \ref{lem exit T} assures the existence and uniqueness of a monotone solution $T(x)$ satisfying \eqref{Eq stat T 1}.
Let us fix this function $T(x)$.

 In the rest of the proof, we will first show the existence and uniqueness of a positive solution of problem \eqref{Eq stat tau}, \eqref{Eq stat tau boundaries}, and then we will prove that this solution is monotone. In both steps we will proceed in the same order of components of solution as for the computation of the functions $\phi_i$.

First, let us focus on the existence of a positive solution of problem \eqref{Eq stat tau}, \eqref{Eq stat tau boundaries}. We start with the third component, and we have the following problem:
\begin{eqnarray}\label{Eq v_3 tau 1}
{D_3}w_3''-(k_3T+h_3)w_3=-k_3\rho_3T, \\
w_3'(0)=0. \nonumber
\end{eqnarray}
We introduce the operator
$$ Lu={D_3}u''-(k_3T+h_3)u , $$
and we are looking for a solution $v_3\in E^1_{\mu}$ of the equation $Lv_3=-k_3\rho_3T$. Then we have $Lv_3<0$ and $a_3(x)=k_3T(x)+h_3>h_3>0$. Hence, the operator $L$ is invertible, and there exists $\varepsilon>0$ such that for all $\lambda\in \text{ Spect }\{L\}$, we have the estimate $\Re \lambda <-\varepsilon$. Consequently, there exists a unique solution $w_3$ of problem \eqref{Eq v_3 tau 1}, and the solution satisfies the inequality $w_3(x)>0$ for all $x \geq 0$.

Proceeding with the same method for the other components (and in the same order as for the computation of $\phi_i$) we prove the existence and uniqueness of a solution $\mathbf{w}(x)$ to \eqref{Eq stat tau} with the boundary condition $\mathbf{w}'(0)=0$.

Let us now prove that this solution is monotonically decreasing. Lemma \ref{lem exit T} already states that $w_8'<0$. Once again we proceed in the same order for the components of solution, starting with the third component. Let $v(x)=-w_3(x)$. Differentiation of equation \eqref{Eq v_3 tau 1} leads to the equation:
\begin{equation}
{D_3}v''-a_3(x)v=-k_3\rho_3T'-k_3w_3T'.
\end{equation}
Setting $f_3=-k_3\rho_3T'-k_3w_3T'$ we note that $v$ satisfies the equation $Lv=f_3>0$. Hence, as for the existence of $w_3$, we get $v>0$.
Consequently it follows that ${w_3}'(x)<0$ for $x>0$. We repeat the same argument for the other components of solution.

Let us investigate the behavior of $\mathbf{w}(x)$ at infinity. Since $\mathbf{w}(x)$ is positive and decreasing, then there is a limit at $+\infty$. Consequently $\mathbf{w}''$ has also a limit, and this limit is $\mathbf{0}$. Then we have $\mathbf{F}^1(\mathbf{w(+\infty)})=\mathbf{0}$. Moreover, Lemma \ref{lem exit T} assures that $w_8(+\infty)=0$. 
Hence, $\mathbf{w(+\infty)}=\mathbf{0}$.
\end{proof}

We have proved that the problem \eqref{Eq stat tau}, \eqref{Eq stat tau boundaries} with condition \eqref{Eq stat tau mono} possesses exactly one solution for $\tau=1$. Now let us show that the topological degree is different from $0$ and that it is preserved along the homotopy.


\medskip

\subsection{Leray-Schauder method and the existence of pulses}

We us the construction of the topological degree for elliptic operators in unbounded domains in weighted H\"older spaces \cite{V1}.
In order to calculate its value,  we need to assure that the operator $A^{\tau}$ defined by \eqref{Def A tau} and linearized about this solution does not have a zero eigenvalue.
We consider the eigenvalue problem for system \eqref{Eq stat tau} linearized about a pulse solution $\mathbf{w}$:

\begin{equation}\label{Eq linearized}
D\mathbf{v}''+(\mathbf{F}^1(\mathbf{w}))'\mathbf{v}=\lambda\mathbf{v},
\end{equation}
on the half-axis $x>0$ with the boundary conditions:
\begin{equation}\label{cond linearized}
\mathbf{v}'(0)=0, \ \mathbf{v}(+\infty)=0.
\end{equation}
Here $v\in E^1_{\mu}$.
We claim the following result.

\begin{prop}\label{prop line eig}
All eigenvalues of the linearized problem \eqref{Eq linearized}, \eqref{cond linearized} are different from $0$.
\end{prop}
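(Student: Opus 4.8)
The plan is to reduce the eigenvalue problem for the full system to a scalar problem for the last component, exploiting the triangular structure inherited from the ordering used throughout Section \ref{Sec Homotopy} and Proposition \ref{prop ex pulse}. First I would argue that any eigenvalue $\lambda$ of \eqref{Eq linearized}, \eqref{cond linearized} must have $\Re\lambda < 0$ except possibly for a principal (real) eigenvalue, using that $(\mathbf{F}^1)'(\mathbf{w})$ has nonnegative off-diagonal entries on $\mathcal{C}$ (Lemma \ref{lem w^-} places $\mathbf{w}$ in $\mathcal{C}$), so the linearized operator is of monotone type and one can invoke a Perron--Frobenius/positivity argument for cooperative elliptic systems on the half-line; the principal eigenfunction can be taken positive. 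So it suffices to exclude $\lambda = 0$, i.e. to show the homogeneous problem $D\mathbf{v}'' + (\mathbf{F}^1(\mathbf{w}))'\mathbf{v} = 0$, $\mathbf{v}'(0) = 0$, $\mathbf{v}(+\infty) = 0$ has only the trivial solution in $E^1_\mu$.

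For $\tau = 1$ the last equation decouples, so the equation for $v_8$ reads $D_8 v_8'' + (P+\tau_1 g)'(T) v_8 = 0$ with $v_8'(0) = 0$, $v_8(+\infty) = 0$. The key observation is that $z := -w_8' = -T'$ solves exactly this linear equation (differentiate \eqref{Eq stat T 1}), and by Lemma \ref{lem exit T} together with Lemma \ref{lem w'<0} we have $z > 0$ on $(0,\infty)$, $z(0) = 0$; moreover $z'(0) = -T''(0) \neq 0$ since $F_8$ does not vanish at $\mathbf{w}(0)$ (this uses $\mathbf{F}^1(\mathbf{w}(0)) > \mathbf{0}$, established as in the proof of Theorem \ref{thm separation}). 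Thus $z$ is a positive solution of the scalar equation that does \emph{not} satisfy the Neumann condition at $0$. Any $v_8 \in E^1_\mu$ solving the scalar problem would be a second solution, decaying at $+\infty$; by a Wronskian/Sturm comparison argument, a nontrivial decaying solution forces the principal eigenvalue of the scalar operator with Neumann condition at $0$ to be $\geq 0$, whereas $z > 0$ with $z(0) = 0$ shows the principal Dirichlet eigenvalue is $0$ and hence the principal Neumann eigenvalue is strictly negative. This contradiction gives $v_8 \equiv 0$.

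Once $v_8 \equiv 0$, I would propagate this up through the remaining components in the same order used in Proposition \ref{prop ex pulse} ($i = 3,4,7,5,2,6,1$). For each such $i$, with all previously-treated components and $v_8$ already known to vanish, the equation for $v_i$ becomes $D_i v_i'' - a_i(x) v_i = 0$ with $a_i(x) = k_i T(x) + h_i \geq h_i > 0$ (or the analogous strictly-positive coefficient), $v_i'(0) = 0$, $v_i(+\infty) = 0$; since the associated operator $L = D_i \partial_x^2 - a_i$ has spectrum bounded above by $-\varepsilon < 0$ (exactly as in the proof of Proposition \ref{prop ex pulse}), it is invertible and $v_i \equiv 0$. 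Iterating yields $\mathbf{v} \equiv \mathbf{0}$, so $0$ is not an eigenvalue. The main obstacle is the scalar step: making rigorous that the decoupled $v_8$-equation cannot have a nontrivial solution satisfying the Neumann condition at $0$ while decaying at infinity; the cleanest route is the Sturm-theoretic comparison with the explicit positive solution $z = -T'$, noting that $z$ has its only zero at $x = 0$ and $z'(0) \neq 0$, which pins down the sign of the principal Neumann eigenvalue.
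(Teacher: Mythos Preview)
Your overall architecture is right and matches the paper: for $\tau=1$ the $v_8$-equation decouples, one shows $v_8\equiv 0$, and then the remaining components vanish by the same invertibility argument used in Proposition~\ref{prop ex pulse}. The opening paragraph about Perron--Frobenius is unnecessary: the proposition only asks you to exclude $\lambda=0$, and the paper works directly with that case.

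The scalar step, however, contains a genuine error. You claim that since $z=-T'>0$ with $z(0)=0$ gives principal Dirichlet eigenvalue $0$, the principal Neumann eigenvalue must be \emph{strictly negative}. The inequality goes the other way: enlarging the form domain from $H^1_0$ to $H^1$ lowers the bottom of the spectrum of $-L$, hence the principal eigenvalue of $L$ with Neumann condition is \emph{at least} the Dirichlet one, so $\lambda_1^N\geq \lambda_1^D=0$. (In fact $\lambda_1^N>0$ here, reflecting the instability of the pulse.) So no contradiction arises from your eigenvalue comparison. What \emph{does} work is the Wronskian idea you mention in passing: since $(F_8^1)'(T(x))\to (F_8^1)'(0)<0$, the decaying solutions of $D_8u''+(F_8^1)'(T)u=0$ form a one-dimensional subspace; both $z$ and a nontrivial $v_8$ would decay, forcing linear dependence, yet $z(0)=0,\ z'(0)\neq 0$ while $v_8(0)\neq 0,\ v_8'(0)=0$ makes them independent. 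That is a clean fix.

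The paper takes a different route for this scalar step. Instead of the Wronskian/asymptotic argument, it compares $v_8$ with $u=-T'$ by hand: assuming $v_8(0)>0$, it forms $u_k=kv_8-u$ for the minimal $k$ making $u_k\geq 0$ on a finite interval, and combines the maximum principle with an auxiliary lemma (positivity of solutions on the tail where $(F_8^1)'(T(x))<0$) to force $u_k\equiv 0$, contradicting the boundary values. A separate case treats sign-changing $v_8$. This is more elementary but longer; your Wronskian argument, once stated correctly, is shorter and equally valid.
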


\begin{proof}
Let us assume that the assertion of the proposition does not hold. Then there exists a nonzero function $\mathbf{v}$ such that
 $$D\mathbf{v}''+(\mathbf{F}^1)'(\mathbf{w})\mathbf{v}=\mathbf{0} , $$
  and $\mathbf{v}'(0)=\mathbf{0}$,  $\mathbf{v}(+\infty)=\mathbf{0}$.
%

The last component of ${v_8}$ of the solution satisfies the equation
\begin{equation}\label{Eq v}
{D_8}v_8''+(F_8^1)'(T)v_8=0, \ v_8'(0)=0, \ v_8(\infty)=0.
\end{equation}
independent of other equations. We will show that it leads to a contradiction.
Let us note that $v_8(0) \neq 0$. Indeed, otherwise $v_8(x) \equiv 0$. In this case, it can be easily proved that all other components
of the solution are also identical zeros. Without loss of generality we can assume that $v_8(0) > 0$.

 Next, we differentiate equation \eqref{Eq stat T 1} and set $u=-T'$, where $T$ is the solution of problem \eqref{Eq stat T 1} given by Lemma \ref{lem exit T}.  It satisfies the problem
\begin{equation}\label{Eq u}
{D_8}u''+(F_8^1)'(T)u=0, \ u(0)=0, \ u(\infty)=0 .
\end{equation}
Moreover, $u(x) > 0$ for all $x > 0$.

 Let us recall that $(F_8^1)'(T)=P'(T)+\tau_1g'(T)$. Since $g(T)$ satisfies \eqref{condition g} and $P(T)$ satisfies \eqref{Condition P}, it follows that $(F_8^1)'(0)<0$. Furthermore, the function $T(x)$ converges to $0$ at infinity, so there exists $x_*>0$ such that $(F_8^1)'(T(x))<0$ for $x\geq x_*$. We need the following lemma to continue the proof of the proposition.


\begin{lem}
Suppose that solution $z(x) \not\equiv 0$ of the equation
\begin{equation}\label{lem intermediaire}
{D_8}z''+(F_8^1)'(T(x))z=0, \ z(\infty)=0.
\end{equation}
is such that $z(x_0) \geq 0$ for some $x_0 > 0$.
If $(F_8^1)'(T(x))<0$ for $x \geq x_0$, then $z(x)>0$ for $x \geq x_0$.
\end{lem}
\begin{proof}
Suppose that the assertion of the lemma does not hold. Then there exists $x_1>x_0$ such that $z(x_1)\leq 0$. Since $z(\infty)=0$, then $z(x)$ admits a minimum $x_2\geq x_1$. If $z(x_2)=0$, then $z(x_2)=z'(x_2)=0$, and $z\equiv 0$, which is impossible. Hence $z(x_2)<0$. In this case we have $(F_8^1)'(T(x_2))z(x_2)>0$. Hence, $z''(x_2)<0$ which contradicts the fact that $x_2$ is a minimum.
\end{proof}

We now return to the proof of the proposition. Let $v(x)$ be a solution of problem \eqref{Eq v}.
Let us recall that $v(0)>0$. Consider, first, the case where $v(x) > 0$ for all $x \geq 0$.
We compare this solution with the solution $u(x)$ of problem \eqref{Eq v}.
Set $u_k(x) = k v(x) - u(x)$, where $k$ is the minimal positive number such that $u_k(x) \geq 0$ for
$0 \leq x \leq x_0$, where $x_0$ is the same as in Lemma 4.5. Then $u_k(x_*)=0$ for some $x_* \in (0,x_0]$
and $u_k(x) \geq 0$ for $0 \leq x \leq x_0$. If $x_* \in (0,x_0)$, then by virtue of the maximum principle,
$u_k(x) \equiv 0$. Hence $k v(x) \equiv u(x)$. We obtain a contradiction with the boundary conditions
since $v(0) > 0$ and $u(0)=0$. If $x_* = x_0$, then it follows from Lemma 4.5 that $u_k(x) \geq 0$ for all $x \geq x_0$.
Therefore, $u_k(x) \geq 0$ for all $x \geq 0$ and $u_k(x_*)=0$. As above, we obtain a contradiction with the maximum principle.

Consider now the case where $v(x)$ changes sign. Therefore, there is a value $\hat x > 0$ such that $v(\hat x) = 0$.
Since $v(x) \not\equiv 0$ for $x \geq \hat x$, then, without loss of generality, we can assume that it has some
positive values. Indeed, otherwise we multiply this solution by $-1$ since we do not use condition $v(0)>0$ anymore.
As before, we compare solutions $u(x)$ and $v(x)$ on the half-axis $x \geq \hat x$ taking into account that
$u(x) > 0$ for $x \geq \hat x$. A similar contradiction completes the proof of the proposition.

%
\end{proof}

We can now complete the proof of the existence of solutions.
Propositions \ref{prop ex pulse} and  \ref{prop line eig} affirm that problem \eqref{Eq stat tau}, \eqref{Eq stat tau boundaries} with condition \eqref{Eq stat tau mono} possesses a unique solution for $\tau=1$ and that the operator linearized about this solution does not have a zero eigenvalue. Then the topological degree for this problem is given by the expression \cite{TravelingWave}:
\begin{equation}
\gamma^*=\sum_{k=1}^{K}(-1)^{\nu_k},
\end{equation}
where $K$ is the number of solutions and $\nu_k$ is the number of positive eigenvalues for the problem linearized around each solution together with their multiplicity. Here $K=1$ since we have exactly one solution, hence, it follows that $\gamma^*\ne 0$.

 From Theorem \ref{thm bound weighed} it follows that there exists a ball $B$ containing all the monotone solutions of the equation $A^{\tau}(\mathbf{w})=\mathbf{0}$. The operator $A^{\tau}$ is proper on closed bounded set with respect to both $\mathbf{w}$ and $\tau$, hence the set of monotone solutions of this equation is compact. The separation Theorem \ref{thm separation} implies that we can construct a compact domain $D$ that contains all monotone solutions of the equation $A^{\tau}(\mathbf{w})=\mathbf{0}$ for all $\tau \in [0,1]$, and it does not contain any non monotone solution. In fact, we can consider $D$ as the union of the ball of radius $r$ around each solution. For this domain the topological degree of the problem remains unchanged along the homotopy:
\begin{equation}
\gamma(A^{\tau},D)=\gamma(A^1,D)=\gamma^*\ne 0, \; \forall \tau \in [0,1].
\end{equation}
Consequently $\gamma(A^0,D)\ne 0$, and the problem \eqref{Eq stat} possesses a solution.


\subsection{Non existence of pulses for $c\leq 0$}

We will now prove that  problem \eqref{Eq stat} does not possess any solution if the wave speed $c$ in  problem \eqref{Eq wave} is non positive. We use a similar approach as presented in \cite{marion}.
Let us first assume that the wave speed $c$ in problem \eqref{Eq wave} is negative.
Suppose that there exists a solution $\mathbf{w}(x)$ of problem \eqref{Eq stat}, and we extend this solution on $\R$ by parity.
 Let us consider the parabolic problem:

\begin{equation}\label{Eq para v}
\begin{cases}
\frac{\partial \mathbf{v}}{\partial t}=D\frac{\partial^2 \mathbf{v}}{\partial x^2}+F(\mathbf{v}),\\
\mathbf{v}(0,x)=\mathbf{v}_0(x),
\end{cases}
\end{equation}
where the initial condition satisfies the inequality:
\begin{equation}\label{cond para v ini}
\ \mathbf{v}_0(x)>\mathbf{w}(x)\;\; \forall x \in \R,
\end{equation}
and the conditions at infinity:
\begin{equation}\label{cond para v bound}
\mathbf{v}(-\infty)=\mathbf{w}^-, \ \mathbf{v}(+\infty)=\mathbf{w}^+.
\end{equation}
From the comparison theorem it follows that the
solution $v(x,t)$ of this problem satisfies the inequality $\mathbf{v}(x,t)>\mathbf{w}(x)$ for all $x$ and $t\geq 0$. Furthermore, according to \cite{TravelingWave}, the solution $\mathbf{v}(x,t)$ converges to the traveling wave $\mathbf{u}(x-ct+h)$:
\begin{equation}
\lim_{t\rightarrow +\infty} \sup_{x\in \R} |\mathbf{v}(x,t)-\mathbf{u}(x-ct+h)|=0.
\end{equation}
Here $h$ is some number.

Since $c<0$, then $\lim_{t\rightarrow +\infty} \mathbf{v}(x,t)=\mathbf{w}^+$ for any $x$. We obtain a contradiction with the inequality $\mathbf{v}(x,t)>\mathbf{w}(x)$  since $\mathbf{w}(x)>\mathbf{w}^+$.
Thus, there is no pulse solution for $c<0$.

Let us now consider the case $c=0$. Problem \eqref{Eq wave} with $c=0$ has a solution $\mathbf{u}(x)$, and for any constant $h$, the function $\mathbf{u}^h(x)=\mathbf{u}(x-h)$ is also a solution.

First, let us show that if $\mathbf{u}^h(N)>\mathbf{w}(N)$ for some $N$ large enough, then

\begin{equation}\label{qq}
  \mathbf{u}^h(x)>\mathbf{w}(x) \;\; {\rm  for} \;\; x\geq N .
\end{equation}
Indeed, the difference $\mathbf z(x) = \mathbf{u}^h(x)>\mathbf{w}(x)$
satisfies the equation similar to equation (\ref{zzz}). The required property was proved above for this equation.

Denote by $h_0$ the minimal value of $h$ for which $\mathbf{u}^h(x)>\mathbf{w}(x)$ for all $x\leq N$.
Then $\mathbf{u}^{h_0}(x) \geq \mathbf{w}(x)$ for $x\leq N$, and ${u}_i^{h_0}(x_0) = {w}_i(x_0)$ for some $x_0\leq N$
and for some component $i$ of the solution. By virtue of \eqref{qq}, $\mathbf{u}^{h_0}(x) \geq \mathbf{w}(x)$
for all $x \in \mathbb R$. Equality ${u}_i^{h_0}(x_0) = {w}_i(x_0)$ gives a contradiction with the maximum principle.
This contradiction proves that there are no pulse solutions for $c=0$.

\section{Discussion}

Blood coagulation propagates in plasma as a reaction-diffusion wave. Its properties and the convergence to this wave
from some initial distribution have important physiological meaning. We will discuss here the biological interpretation
of the mathematical results.

Let us begin with the number and stability of stationary points. As it was indicated in Section 1, they are determined by the
number and stability of zeros of the polynomial $P(T)$. Depending on parameters, it can have from one to four non negative
zeros including $T=0$ and, possibly, from one to three positive zeros. If there is only the trivial equilibrium $T=0$,
then the reaction-diffusion wave does not exist, and blood coagulation cannot occur. If there is one or three positive
zeros of the polynomial $P(T)$, then this is so-called monostable case where the point $T=0$ is unstable with respect to the corresponding
kinetic system (without diffusion). In this case, blood coagulation  begins under any small perturbation of the trivial
equilibrium which is not possible biologically. The only realistic from the physiological point of view situation
is realized with two positive equilibria corresponding to the bistable case. The trivial equilibrium $T=0$ is stable,
and blood coagulation begins if the initial perturbation exceeds some threshold level.

The initial production of blood factors after injury occurs due to the interaction of blood plasma with the damaged
vessel wall. This initial quantity of blood factors should be sufficiently large in order to overcome the threshold
level and to initiate blood coagulation. Therefore, it is important to determine these critical conditions.
We show in this work that the threshold level of the initial quantity of blood factors is given by the pulse solution
of the reaction-diffusion system. This solution exists if and only if the speed of the travelling wave is positive.
Hence, we come to the following conditions of blood coagulation: the wave speed is positive and the initial condition should
be greater than the pulse solution.

An approximate analytical method to determine the wave speed is suggested in \cite{BOUCHNITA201674}.
The system of equation is reduced to a single equation by some asymptotic procedure. It is the same equation for the
thrombin concentration
\begin{equation}\label{ss}
  \frac{\partial T}{\partial t} = D \; \frac{\partial^2 T}{\partial x^2} + P(T)
\end{equation}
as the equation considered above to construct the homotopy in the Leray-Schauder method.
There is a simple analytical condition providing the positiveness of the wave speed for this equation:
the integral $\int_0^{T^-} P(T)dT$ should be positive. If the wave speed is negative, then blood coagulation does not occur.
If the wave speed is positive but it is less than some given physiological values, then blood coagulation is insufficient
leading to possible bleeding disorders such as hemophilia. If the speed is too large, then excessive blood coagulation
can lead to thrombosis. The analytical approach suggested in \cite{BOUCHNITA201674} gives a good approximation of the
wave speed obtained for the system of equations and in the experiments.

The second criterium of blood coagulation concerns the initial condition. If the wave speed is positive, then, as we prove in this
work, there exists a pulse solution. The initial condition should be greater than this pulse solution. For the scalar equation
the solution will then locally converge to the second stable equilibrium. It is also proved that it converges to the travelling
wave solution. For the system of equations, local convergence to the second stable equilibrium can also be proved using
the method of upper and lower solutions. Convergence to the travelling wave is not proved but it can be expected
since it takes place for the scalar equation approximating the system of equations.
If the initial condition is less than the pulse solution, then the solution will decay converging to zero. Blood coagulation
does not occur in this case. Mathematically, this convergence can be proved both for the scalar equation and for the
system of equations by the same method of upper and lower solutions.

The results of this work essentially use the monotonicity property of the reaction-diffusion system. This property implies
the applicability of the maximum principle and of some other mathematical methods. The model considered here is a simplification
of more complete models of blood coagulation. Though they may not satisfy the monotonicity property, we can expect that
similar qualitative properties of solutions will also be valid for these more complete models.

Finally, let us note that the approach developed in this work can be applied for some models of blood coagulation in flow.
This question will be investigated in the subsequent works.



\bigskip

\noindent
\paragraph{Acknowledgements.}

The work was partially supported by the ``RUDN University Program 5-100''.


\bigskip


\newpage

\bibliographystyle{apalike}
\bibliography{biblio}

\begin{thebibliography}{99}



\bibitem{BOUCHNITA201674}
T. Galochkina, H. Ouzzane, A. Bouchnita and V. Volpert, Traveling wave solutions in the mathematical model of blood coagulation.   {\it Applicable Analysis},
 96, 2017,  no. 16, 2891-2905.


\bibitem{A2}
Y. V. Krasotkina,  E. I. Sinauridze and F. I. Ataullakhanov,  Spatiotemporal
dynamics of fibrin formation and spreading of active thrombin entering non-recalcified plasma by diffusion, {\it  Biochimica et Biophysica Acta (BBA) - General Subjects}, 1474, 2000, no. 3, 337-345.

\bibitem{marion}
M. Marion and V. Volpert, Existence of pulses for a monotone
reaction-diffusion system.  {\it  J Pure and Applied Functional Analysis},  1, 2016, 97-122.

\bibitem{pogorelova_influence_2014}
E. A. Pogorelova and A. I.  Lobanov, Influence of enzymatic reactions
on blood coagulation autowave,  {\it Biophysics }, 59, 2014, no. 1, 10-118.

\bibitem{tokarev_volpert2006}
A. A. Tokarev, Y. V. Krasotkina, M. V. Ovanesov, M. A. Panteleev, M. A. Azhigirova, V. A.
Volpert, F. I. Ataullakhanov, and A. A. Butilin,  Spatial dynamics of contact-activated fibrin clot
formation in vitro and in silico in haemophilia b: Effects of severity and ahemphil b treatment,  {\it Mathematical
Modelling of Natural Phenomena}, 1, 2006, no. 2, 124-137.

\bibitem{V1}
V. Volpert, {\it Elliptic partial differential equations, Volume 1:
Fredholm theory of elliptic problems in unbounded domains},
Birkh\"auser, Basel, 2011.

\bibitem{Spectrum}
A. I. Volpert, V. A. Volpert, Spectrum of elliptic operators and stability of travelling waves,
{\it Asymptotic Analysis}, 23, 2000, no. 0, 111--134.

\bibitem{TravelingWave}
A. I. Volpert, V. A. Volpert, V. A. and V. A.Volpert,  {\it Traveling Wave Solutions of
Parabolic Systems}, AMS, Translations of Mathematical Monographs, volume 140, Providence, 1994.

\bibitem{zarnitsina_dynamics_2001}
V.I. Zarnitsina, F.I. Ataullakhanov, A.I. Lobanov, and 0.L. Morozova,
Dynamics of spatially nonuniform patterning in the model of blood coagulation, {\it Chaos: An Interdisciplinary
Journal of Nonlinear Science}, 11, 2001, no. 1, 57-70.













\end{thebibliography}

\appendix

\section{Expression of the function $\boldsymbol{\phi}$}
\label{App Exp phi}

The function $\boldsymbol{\phi}(T) = (\phi_1(T),...,\phi_7(T))$ such that $\mathbf F(\boldsymbol{\phi}(T),T)=0$  is given by the following expressions:
\begin{align}
v_1= & \frac{k_1}{h_1}\phi_3(T)\phi_6(T)=\phi_1(T),\label{Equation phi1}\\
v_2= & \frac{k_2}{h_2}\phi_4(T)\phi_5(T)=\phi_2(T),
\label{Equation phi2}\\
v_3= & \frac{k_3\rho_3T}{k_3T+h_3}=\phi_3(T), \label{Equation phi3} \\
v_4= & \frac{k_4\rho_4T}{k_4T+h_4}=\phi_4(T),
\label{Equation phi4}\\
v_5= & \frac{k_5\rho_5\phi_7(T)}{k_5\phi_7(T) +h_5}=\phi_5(T),
\label{Equation phi5}\\
v_6= & \frac{\rho_6(k_6\phi_5(T)+\bar{k_6}\phi_2(T))}{k_6\phi_5(T)+\bar{k_6}\phi_2(T)+h_6}=\phi_6(T),
\label{Equation phi6} \\
v_7= & \frac{k_7\rho_7T}{k_7T+h_7}=\phi_7(T),
\label{Equation phi7}
\end{align}

For each $i=1,...,7$, the function $\phi_i$ satisfies $F_i(\phi_i(T),T)=0$ for $T\geq 0$. Then $\boldsymbol{\phi}$ is obtain by successively determining: $\phi_3$, $\phi_4$, $\phi_7$, $\phi_5$, $\phi_2$, $\phi_6$ and $\phi_1$. This order of computing will often be used in this work.  It can be noticed that the functions $\phi_i$ are rational fractions with no positive poles.

The following properties of the functions $\phi_i$ are straightforward:
\begin{equation}
\begin{cases}
\phi_i(0)=0, \\
\phi_i(T)>0 \ \text{and } \phi_i'(T)  >0 \text{ for } T>0, \\
\underset{T\rightarrow +\infty}{\text{lim }}\phi_i(T)=w_{i,\ell} <\infty.
 \end{cases}
 \label{Properties phi}
\end{equation}
 Furthermore we note that for $i=3,...,7$ and $T<T^0$, $\phi_i(T)<\rho_i$.


 \bigskip

\section{Description of $P$}
\label{App Poly P}

In the Appendix \ref{App Exp phi} we saw that the $\phi_i$ are rational fractions, hence it is also the case for $P$ since $P$ satisfies \eqref{Equation P}. We have:
\begin{equation}
P(T)=\frac{P_1(T)(T^0-T)}{P_2(T)}-h_8T,
\end{equation}
where $P_1$ and $P_2$ are third degree polynomials with positive coefficients, hence $P_2(T)>0$ for $T\geq 0$. Then the numerator of $P$ denoted $R$ is:
\begin{equation}
R(T)=P_1(T)(T^0-T)-h_8P_2(T)T.
\label{Polynom R}
\end{equation}
The computation of the coefficients of $R$ shows that it is a fourth degree polynomial: $R(T)=aT^4+bT^3+cT^2+dT$.
Since the coefficient of $P_1$ and $P_2$ are positive, we can note that the coefficient $a$ is negative.
Only the explicit form of the coefficient $d$ is used:
\begin{equation}
d=k_5\rho_5k_6\rho_6k_7\rho_7k_8\rho_8h_1h_2h_3h_4 - h_1h_2h_3h_4h_5h_6h_7h_8
\label{Coef d}
\end{equation}


\bigskip

\section{Jacobian matrix $(\mathbf {F}^{\tau})'$}
\label{App Jac Mat}

The matrix has the form:
\begin{equation}
\mathcal{J}^{\tau}(v) =
\begin{pmatrix}
-d_1&0&\theta_{1,3}&0&0&\theta_{1,6}&0&0\\
0&-d_2&0&\theta_{2,4}&\theta_{2,5}&0&0&0\\
0&0&-d_3&0&0&0&0&\theta_{3,8}\\
0&0&0&-d_4&0&0&0&\theta_{4,8}\\
0&0&0&0&-d_5&0&\theta_{5,7}&0\\
0&\theta_{6,2}&0&0&\theta_{6,5}&-d_6&0&0\\
0&0&0&0&0&0&-d_7&\theta_{7,8}\\
\theta_{8,1}&0&0&0&0&\theta_{8,6}&0&H^{\tau}
\end{pmatrix}
\label{Jacobian matrix}
\end{equation}

\vspace{5pt}
where $H^\tau = P'(T)$,


\begin{equation}
\begin{cases}
d_1=h_1 , \;\;
d_2=h_2 , \;\;
d_3=k_3T+h_3 , \;\;
d_4=k_4T+h_4 , \;\; \\
d_5=k_5v_7+h_5 , \;\;
d_6=\overline{k_6}v_2+k_6v_5+h_6 , \;\;
d_7= k_7T+h_7
\end{cases}
\end{equation}

\begin{equation}
\begin{cases}
\theta_{1,3}=k_1v_6 , \;\;
\theta_{1,6}=k_1v_3 , \;\;
\theta_{2,4}=k_2v_5 , \\
\theta_{2,5}=k_2v_4 , \;\;
\theta_{3,8}=k_3(\rho_3-v_3)
\end{cases}
\end{equation}

 \begin{equation}
\begin{cases}
\theta_{4,8}=k_4(\rho_4-v_4) , \;\;
\theta_{5,7}=k_5(\rho_5-v_5) , \\
\theta_{6,2}=\overline{k_6}(\rho_6-v_6) , \;\;
\theta_{6,5}=k_6(\rho_6-v_6)\\
\theta_{7,8}=k_7(\rho_7-v_7) , \;\;
\theta_{8,1}=\overline{k_8}\alpha^{\tau}(T^0-T) , \;\;
\theta_{8,6}=k_8\alpha^{\tau}(T^0-T)
\end{cases}
\end{equation}


\end{document}